\documentclass[a4paper,english,oneside]{article}
\usepackage[T1]{fontenc}
\usepackage[utf8]{inputenc}

\usepackage{geometry}
\geometry{a4paper,top=3cm,bottom=3cm,left=3.2cm,right=3.2cm,%
heightrounded,bindingoffset=5mm}

\usepackage{amsmath}
\usepackage{amsfonts}
\usepackage{amssymb}
\usepackage{amsthm}
\usepackage[english]{babel}
\usepackage{color}
\usepackage{lmodern}
\usepackage{hyperref}

\newtheorem{thm}{Theorem}[section]

    \newtheorem{dfn}[thm]{Definition}
    \newtheorem{prop}[thm]{Proposition}
    \newtheorem{lemma}[thm]{Lemma}

    \newtheorem{rmk}[thm]{Remark}

    \DeclareMathOperator\Fuj{Fuj}
    \DeclareMathOperator\crit{crit}

\newcommand{\vertiii}[1]{{\left\vert\kern-0.25ex\left\vert\kern-0.25ex\left\vert #1 
    \right\vert\kern-0.25ex\right\vert\kern-0.25ex\right\vert}}

\title{A global existence result for a semilinear wave equation with scale-invariant damping and mass in even space dimension}

\author{Alessandro Palmieri}



\begin{document}

\maketitle

\begin{abstract} In the present article a semilinear wave equation with scale-invariant damping and mass is considered. The global (in time) existence of radial symmetric solutions in even spatial dimension $n$ is proved using weighted $L^\infty-L^\infty$ estimates, under the assumption that the multiplicative constants, which appear in the coefficients of damping and of mass terms, fulfill an interplay condition which yields somehow a ``wave-like'' model. In particular, combining this existence result with a recently proved blow-up result, a suitable shift of Strauss exponent is proved to be the critical exponent for the considered model. Moreover, the still open part of the conjecture done by D'Abbicco - Lucente - Reissig in  \cite{DabbLucRei15} is proved to be true in the massless case.

\end{abstract}

\section{Introduction}
In the last decade several papers have been devoted to the study of the semilinear wave equation with scale-invariant damping and power nonlinearity
\begin{align} \label{semilinear scale inv damping}
\begin{cases}
u_{tt}- \Delta u +\frac{\mu}{1+t}u_t=|u|^p  , & x\in \mathbb{R}^n, \, t>0, \\
u(0,x)=u_0(x),  & x\in \mathbb{R}^n,  \\
u_t(0,x)=u_1(x), & x\in \mathbb{R}^n,
\end{cases}
\end{align} where $\mu$ is a positive constant. The damping term in \eqref{semilinear scale inv damping} is critical, indeed, it represents a threshold between effective and non-effective dissipation (see \cite{Wirth06, Wirth07}). Here, roughly speaking, effective (non-effective, respectively) means that the solution behaves somehow as the solution of the classical damped wave equation (the free wave equation, respectively) from the point of view of decay estimates. Due to the limit behavior of the time dependent coefficient in the damping term, it is quite natural that the magnitude of the constant $\mu$ influences strongly the nature of the equation.

Naively speaking, we can say that for suitably large $\mu$ \eqref{semilinear scale inv damping} and its corresponding linear Cauchy problem are  ``parabolic'' from the point of view of the critical exponent for the power-nonlinearity and decay estimates, respectively.

More precisely,  global (in time) existence results are proved in \cite{D13} for super-Fujita exponents, that is, for $p>p_{\Fuj}(n)\doteq 1+\tfrac{2}{n}$  in dimensions $n=1,2$ and $n\geq 3$ in the cases $\mu \geq \frac{5}{4}$, $\mu \geq 3$ and $\mu\geq n+2$, respectively. Combining these existence results with a blow-up result from \cite{DabbLucMod}, it results that the critical exponent for \eqref{semilinear scale inv damping} is the so-called \emph{Fujita exponent} $p_{\Fuj}(n)$ when $\mu$ is sufficiently large. 

Simoultaneously and independently, in \cite{Waka14A} with different techniques $p_{\Fuj}(n)$ is proved to be critical, assumed that $\mu$ is greater than a given constant $\mu_0\approx (p-p_{\Fuj}(n))^{-2}$. In particular, the test function method is employed to prove the blow-up of the solution for $1<p\leq p_{\Fuj}(n)$ when $\mu\geq 1$ and for $1<p\leq p_{\Fuj}(n+\mu-1)$ when $\mu\in (0, 1)$, for suitable initial  data. 

Hence, for $\mu$ suitably large, in \eqref{semilinear scale inv damping} the damping term has the same influence on properties of solutions as in the constant coefficients case (classical damped wave equation).

Yet, the situation is completely different when $\mu$ is small. In \cite{DabbLucRei15} the special value $\mu=2$ is considered. Indeed, for this value of $\mu$, \eqref{semilinear scale inv damping} can be transformed in a semilinear free wave equation with nonlinearity $(1+t)^{-(p-1)}|u|^p$. Thus, by using the so-called \emph{Kato's lemma} (see for example \cite[Lemma 2.1]{Yag05} or \cite[Lemma 2.1]{Tak15}), the authors prove a blow-up result for 
\begin{align} \label{def p2} 
1<p\leq p_2(n)\doteq \max\{p_{\Fuj}(n),p_0(n+2)\}=\begin{cases} p_{\Fuj}(n) & \mbox{if} \,\, n=1, \\ p_0(n+2)  & \mbox{if} \,\, n\geq 2, \end{cases}
\end{align} in any spatial dimension, assuming nonnegative and compactly supported initial data; here $p_0(n)$ denotes the so-called \emph{Strauss exponent}, that is, the critical exponent for the free wave equation with power nonlinearity, which is the positive root of the quadratic equation $$(n-1)p^2-(n+1)p-2=0.$$
 
Furthermore, the previous upper bound for $p$ is optimal in the cases $n=1,2,3$, since global existence results are prove for $p>p_2(n)$ in \cite{D13} for $n=1$ and in \cite{DabbLucRei15} for $n=2$ and $n=3$ in the  radial symmetric case. Afterwords, in \cite{DabbLuc15} the sharpness of that blow-up result is shown also in odd dimensions $n\geq 5$ for the radial symmetric case. Since the critical exponent is $p_0(n+2)$ for $n=2$ and any $n\geq 3$,  $n$ odd, we remark for the value $\mu=2$ a ``wave-like'' behavior from the point of view of the critical exponent $p$ in \eqref{semilinear scale inv damping}. Moreover, recently, in several works, namely \cite{LaiTakWak17,IkedaSob17,TuLin17sc,TuLin17cc}, it has been studied the blow-up of solutions to \eqref{semilinear scale inv damping} in the case in which the constant $\mu$ is small.
 
 Roughly speaking, in those papers it is derived $p>p_0(n+\mu)$ as a necessary condition for the global (in time) existence of solutions of \eqref{semilinear scale inv damping}, under suitable assumptions on initial data, for $0<\mu <\frac{n^2+n+2}{n+2}$. Furthermore, some upper bound estimates for the life-span of non-global (in time) solutions are proved. This necessary condition points out once again the hyperbolic nature of the model \eqref{semilinear scale inv damping} for small $\mu$.
%
%

The semilinear model \eqref{semilinear scale inv damping} can be generalized, considering a further lower order term, namely a mass term, whose time dependent coefficient is chosen suitably in order to preserve the scale-invariance property of the corresponding linear model. Therefore,  in this work we will focus on the Cauchy problem for semilinear wave equation with scale-invariant damping and mass and power nonlinearity
\begin{align} \label{semilinear scale inv damping and mass}
\begin{cases}
u_{tt}- \Delta u +\frac{\mu}{1+t}u_t+\frac{\nu^2}{(1+t)^2}u=|u|^p  , & x\in \mathbb{R}^n,  \, t>0,\\
u(0,x)=u_0(x), & x\in \mathbb{R}^n, \\
u_t(0,x)=u_1(x), & x\in \mathbb{R}^n,
\end{cases}
\end{align}  where $\mu,\nu$ are nonnegative constants.
Let us define the quantity  $\delta \doteq (\mu-1)^2 -4\nu^2$, which describes the interplay between the damping term $\frac{\mu}{1+t} u_t$ and the mass term $\tfrac{\nu^2}{(1+t)^2}u$.  For further considerations on how the quantity $\delta$ describes the interplay between the damping term $\frac{\mu}{1+t} u_t$ and the mass term $\tfrac{\nu^2}{(1+t)^2}u$ one can see \cite{PalThesis}.

Recently, \eqref{semilinear scale inv damping and mass} has been studied in \cite{NunPalRei16,PalRei17,Pal17,PalRei17FvS,Pal18,DabbPal18}  under different assumptions on $\delta$. 

In this article, the following relation between $\mu$ and $\nu$ is required:
\begin{align}\label{delta=1 condition}
\delta =1.
\end{align}

We stress, that \eqref{delta=1 condition} allows to relate the solution to \eqref{semilinear scale inv damping and mass} with the solution to the semilinear Cauchy problem 
\begin{align} \label{semilinear free wave eq}
\begin{cases}
v_{tt}- \Delta v = (1+t)^{-\frac{\mu}{2}(p-1)}|v|^p  , & x\in \mathbb{R}^n, \, t>0,\\
v(0,x)=u_0(x), & x\in \mathbb{R}^n, \\
v_t(0,x)=u_1(x)+\tfrac{\mu}{2}u_0(x), & x\in \mathbb{R}^n,
\end{cases}
\end{align}
through the transformation $u(t,x)=(1+t)^{-\frac{\mu}{2}}v(t,x)$.

Supposing the validity of \eqref{delta=1 condition}, in \cite{NunPalRei16} has been proved a blow-up result for $$1<p\leq p_{\crit}(n,\mu) \doteq \max\{p_{\Fuj}\big(n+\tfrac{\mu}{2}-1\big),p_0(n+\mu)\}=\begin{cases} p_{\Fuj}\big(n+\tfrac{\mu}{2}-1\big) & \mbox{if} \,\, n=1,2, \\ p_0(n+\mu)  & \mbox{if} \,\, n\geq 3, \end{cases}$$ 
provided that data are nonnegative and compactly supported (see also \cite[Theorem 2.6]{NunPalRei16}).
Very recently, in \cite{Pal18} the exponent $p_{\crit}(n,\mu)$ is shown to be critical in the odd dimensional case $n\geq 1$. In particular, following the approach of \cite{Ku94A,KuKu94,Ku94,KuKu95,Ku97}, in the odd dimensional case $n\geq 3$ the radial symmetric case is considered, but an upper bound for $\mu$ has to be required. 

Since in even dimension Huygens' principle is no longer valid, it is clear that something has to be modified with respect to the approach in \cite{Pal18}, in order to study the even case. Purpose of this work is study the sufficiency part for \eqref{semilinear scale inv damping and mass} under the assumption \eqref{delta=1 condition} when $n\geq 4$ is even. In other words, we want to prove that $p_0(n+\mu)$ is actually the critical exponent, by proving a global (in time) existence result for supercritical exponents. However, due to technical reasons, it is necessary to claim $\mu$ below a certain threshold (exactly as in the odd dimensional case we mentioned above).
More specifically, in the treatment we will follow the approach developed in \cite{KuKu96} for the free wave equation in the radial symmetric and even dimensional case.  


\paragraph{Notations} In the present paper we denote $\langle y\rangle=1+|y|$ for any $y\in\mathbb{R}$. Furthermore, $f\lesssim g$ means $0\leq f\leq C g$ for a suitable, independent of $f$ and $g$ constant $C>0$ and $f\approx g$ stands for $f\lesssim g$ and $f\gtrsim g$. Finally, as in the introduction, throughout the article $p_{\Fuj}(n)$ and $p_0(n)$ denote the Fujita exponent and the Strauss exponent, respectively.

 
 

\section{Main result} \label{Section main result}

In this section we state the global (in time) existence result. But first, let us introduce some preparatory  definitions.
Using the so-called \emph{dissipative transformation} $v(t,x)=\langle t\rangle^{\frac{\mu}{2}} u(t,x)$, thanks to \eqref{delta=1 condition} we find that $v$ is a solution to \eqref{semilinear free wave eq}.
Due to the fact that we are looking for radial solutions, we are interested to solutions of \eqref{semilinear free wave eq} that solve
\begin{align}\label{semi rad n>=5}
\begin{cases}
 v_{tt} -v_{rr}-\frac{n-1}{r}v_r =\langle t\rangle^{-\frac{\mu}{2}(p-1)}|v|^p,&  r>0,\,\, t>0, \\  v(0,r)=f(r), & r>0,\\  v_t(0,r)=g(r), & r>0,
\end{cases}
\end{align}  where  where $f\doteq u_0$ and $g\doteq u_1+\tfrac{\mu}{2}u_0$, $r=|x|$ and a singular behavior of solutions and their $r$-derivatives is allowed as $r\to 0^+$.  Let us recall some known result for the corresponding linear problem
\begin{align}\label{CP radial lin n>=5}
\begin{cases}
 v_{tt}-v_{rr}-\frac{n-1}{r}v_r=0 , & r>0,\,\, t>0,  \\  v(0,r)=f(r), & r>0,\\  v_t(0,r)=g(r), & r>0.
\end{cases}
\end{align}
 Let us begin with the linear Cauchy problem
\begin{align}\label{CP radial lin n>=4, f=0}
\begin{cases}
 v_{tt}-v_{rr}-\frac{n-1}{r}v_r=0 , & r>0,\,\, t>0,  \\  v(0,r)=0, & r>0,\\  v_t(0,r)=c_n g(r), & r>0,
\end{cases}
\end{align} where we included the multiplicative constant $$c_n\doteq  \sqrt{\pi}\, \Gamma\big(\tfrac{n-1}{2}\big)=\pi\, 2^{-\frac{n-2}{2}} (n-3)!!$$ in the second data in order to ``normalize'' the representation formula for the solution of this Cauchy problem.
Let us introduce the parameter $m\doteq \tfrac{n-2}{2}\geq 1.$

We define for $t\geq 0$ and $r>0$ the function
\begin{align}
\Theta(g)(t,r)\doteq r^{-2m}\{w_1(t,r)+w_2(t,r)\}, \label{def theta(g) w1,w2}
\end{align} 
where we have set 
\begin{align}
w_1(t,r) &\doteq  \int_{|t-r|}^{t+r} \lambda^{2m+1} g(\lambda)\, K_m(\lambda,t,r)\,d\lambda, \label{def w1} \\
w_2(t,r) &\doteq \int_{0}^{(t-r)_+} \lambda^{2m+1} g(\lambda)\, \widetilde{K}_m(\lambda,t,r)\,d\lambda, \label{def w2}
\end{align} with
\begin{align}
K_j(\lambda,t,r)&\doteq\int_{\lambda}^{t+r} \frac{H_j(\rho,t,r)}{\sqrt{\rho^2-\lambda^2}} \, d\rho \qquad \mbox{for} \quad j=0,1, \cdots ,m, \label{def Kj} \\
\widetilde{K}_j(\lambda,t,r)&\doteq\int_{t-r}^{t+r} \frac{H_j(\rho,t,r)}{\sqrt{\rho^2-\lambda^2}} \, d\rho \qquad \mbox{for} \quad j=0,1, \cdots ,m, \label{def Ktildej}
\end{align} and
\begin{align*}
H_j(\rho,t,r)&\doteq \big(\big(\tfrac{1}{2 \rho} \tfrac{\partial}{\partial \rho}\big)^*\big)^j H(\rho-t,r)  \qquad \mbox{for} \quad j=0,1, \cdots ,m \quad \mbox{and} \quad |\rho-t|\leq r,   \\
H(\rho,r)&\doteq (r^2-\rho^2)^{m-\frac{1}{2}}, 
\end{align*} being $(\frac{1}{2 \rho} \frac{\partial}{\partial \rho})^*=\frac{\partial}{\partial \rho}(-\frac{1}{2 \rho})$ the adjoint operator of $ \frac{1}{2 \rho} \frac{\partial}{\partial \rho}$ (for further considerations on the representation formula \eqref{def theta(g) w1,w2} cf. \cite[Section 3.2]{KuKu96}).

Let $v^0=v^0(t,r)$ be the function defined as follows: 
 \begin{align} \label{def v0 even case} 
v^0\doteq c_n^{-1} \big\{\Theta(g)+\partial_t \Theta(f)\big\}.
\end{align} The function $v^0$ is the solution to \eqref{CP radial lin n>=5}. In Section \ref{Section lin eq} we will clarify more precisely in which sense $v^0$ solves  \eqref{CP radial lin n>=5} (cf. Proposition \ref{thm lin probl n>=4 distr}). 
We introduce now the space for solutions.
Given a positive parameter  $\kappa$, we define the Banach space
$$X_\kappa\doteq \big\{v\in \mathcal{C}\big([0,\infty),\mathcal{C}^1(0,\infty)\big): \, \|v\|_{X_\kappa <\infty}\big\},$$ equipped with the norm 
\begin{align*}
\|v\|_{X_\kappa}\doteq \sup_{t\geq 0\,, \, r>0}\big(r^{m-1}\langle r\rangle |v(t,r)|+r^{m}|\partial_r v(t,r)|\big) \phi_\kappa (t,r)^{-1},
\end{align*}   where the weight function $\phi_\kappa$ is defined by 
\begin{align}\label{def phi kappa}
\phi_\kappa(t,r)\doteq \langle t+r \rangle^{-\frac{1}{2}} \langle t-r \rangle^{-\kappa}.
\end{align}

Let us consider the integral operator $L$ defined for any $v\in X_\kappa$ by
\begin{align}\label{def Lv case n even}
Lv(t,r)\doteq c_n^{-1} \int_0^t \langle\tau\rangle^{-\frac{\mu}{2}(p-1)}\Theta (|u(\tau,\cdot)|^p)(t-\tau,r) \, d\tau ,
\end{align} where $\Theta(|u(\tau,\cdot)|^p)$ is defined by \eqref{def theta(g) w1,w2}, replacing $g(\lambda)$ with $|u(\tau,\lambda)|^p$. 
 
 According to Duhamel's principle, we introduce the following definition:
\begin{dfn} \label{def sol n even} Let $v^0=v^0(t,r)$ be the function defined through \eqref{def v0 even case}. We say that $v=v(t,r)$ is a radial solution to \eqref{semilinear free wave eq} in $X_\kappa$, if $v\in X_\kappa$ for some $\kappa>0$ and $v$ satisfies the integral equation 
\begin{align*}
v(t,r)=v^0(t,r)+L v(t,r) \qquad \mbox{for any} \quad t\geq 0, \, r>0.
\end{align*}
\end{dfn}
 
In the following we will use the notation:
\begin{align}\label{definition M1(n)}
M(n)\doteq \tfrac{n-1}{2}\Big(1+\sqrt{\tfrac{n+7}{n-1}}\Big).
\end{align} 
As we will see in the next result, which is the main theorem of this article, $M(n)$ is  the upper bound for the coefficient $\mu$.  
 
\begin{thm}\label{thm GER n even semi} Let $n\geq 4$ be an even integer. Let us assume  $\mu\in\big[2,M(n)\big)$ and $\nu\geq 0$ satisfying the relation \eqref{delta=1 condition}, where $M(n)$ is defined by \eqref{definition M1(n)}, and
\begin{align} \label{p assumption GET radial n>4 even}
p\in \Big(p_0(n+\mu),\min\big\{p_{\Fuj}\big(\tfrac{n+\mu-1}{2}\big),p_{\Fuj}(\mu)\big\}\Big). 
\end{align} 

Then, there exist $\varepsilon_0>0$ and $0<\kappa_1<\kappa_2< m+\frac{1}{2}$ such that for any $\varepsilon\in (0,\varepsilon_0)$ and any radial data $u_0\in \mathcal{C}^2(\mathbb{R}^n),u_1\in \mathcal{C}^1(\mathbb{R}^n)$ satisfying
\begin{align*}
|d_r^ju_0(r)| & \leq \varepsilon\, r^{-m-j+1}\langle r\rangle^{-\bar{\kappa}-\frac{3}{2}} \qquad \mbox{for} \quad j=0,1,2, \\
|d_r^j(u_1(r)+\tfrac{\mu}{2}u_0(r))| & \leq \varepsilon \, r^{-m-j}\langle r\rangle^{-\bar{\kappa}-\frac{3}{2}} \qquad \quad \mbox{for}  \quad j=0,1,
\end{align*} for some $\bar{\kappa}\in (\kappa_1,\kappa_2]$, the Cauchy problem \eqref{semilinear scale inv damping and mass} admits a uniquely determined radial solution $u\in \mathcal{C}([0,\infty), \mathcal{C}^1(\mathbb{R}^n\setminus \{0\}))$, in the sense that $v(t,r)=\langle t\rangle^{\frac{\mu}{2}}u(t,r)$ satisfies Definition \ref{def sol n even} for any $\kappa \in (\kappa_1,\bar{\kappa}]$.

Furthermore, the following decay estimates hold for any $t\geq 0$, $r>0$ and $\kappa \in (\kappa_1,\bar{\kappa}]$:
\begin{align*}
|u(t,r)|&\lesssim \varepsilon \,r^{-m+1} \langle r\rangle^{-1} \langle t\rangle^{-\frac{\mu}{2}} \langle t-r \rangle^{-\kappa} \langle t+r\rangle^{-\frac{1}{2}}, \\
|\partial_r u(t,r)|&\lesssim \varepsilon \, r^{-m} \langle t\rangle^{-\frac{\mu}{2}} \langle t-r \rangle^{-\kappa} \langle t+r\rangle^{-\frac{1}{2}}.
\end{align*} 
\end{thm}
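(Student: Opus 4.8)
The plan is to run a contraction-mapping argument in the Banach space $X_\kappa$ for the integral operator $v\mapsto v^0+Lv$ associated with Definition \ref{def sol n even}. Concretely, I would fix $\kappa_1<\kappa_2<m+\tfrac12$ (to be determined) and $\bar\kappa\in(\kappa_1,\kappa_2]$, and show that for $\varepsilon$ small enough the map $N v\doteq v^0+Lv$ sends the ball $\{\|v\|_{X_\kappa}\le C_0\varepsilon\}$ into itself and is a contraction there, for every $\kappa\in(\kappa_1,\bar\kappa]$. Once the fixed point $v$ is obtained, $u(t,r)=\langle t\rangle^{-\mu/2}v(t,r)$ solves \eqref{semilinear scale inv damping and mass} by the dissipative transformation and the $\delta=1$ relation, and the claimed pointwise decay for $u$ and $\partial_r u$ is just the definition of the $X_\kappa$-norm rewritten after multiplying by $\langle t\rangle^{-\mu/2}$, using $r^{m-1}\langle r\rangle|v|\le C\varepsilon\,\phi_\kappa$ and $r^m|\partial_r v|\le C\varepsilon\,\phi_\kappa$.

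Two estimates carry the whole argument. The first is the \emph{linear estimate}: the data hypotheses on $u_0$ and $g=u_1+\tfrac\mu2 u_0$ must be propagated through the representation $v^0=c_n^{-1}\{\Theta(g)+\partial_t\Theta(f)\}$ to give $\|v^0\|_{X_{\bar\kappa}}\lesssim\varepsilon$; since $\bar\kappa\ge\kappa$ and $\langle t-r\rangle^{-\bar\kappa}\le\langle t-r\rangle^{-\kappa}$, this also controls $\|v^0\|_{X_\kappa}$. This requires careful analysis of the kernels $K_m,\widetilde K_m$ in \eqref{def Kj}--\eqref{def Ktildej}, i.e.\ bounding $w_1,w_2$ and their $r$-derivatives using the weighted bounds $|g(\lambda)|\lesssim\varepsilon\lambda^{-m}\langle\lambda\rangle^{-\bar\kappa-3/2}$; presumably the relevant pointwise kernel bounds are exactly those established in Section \ref{Section lin eq} (the analogue of \cite{KuKu96}). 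The second and decisive estimate is the \emph{nonlinear estimate} for $L$: one must show that if $\|v\|_{X_\kappa}\le A$ then, writing $|u(\tau,\lambda)|^p=\langle\tau\rangle^{-p\mu/2}|v(\tau,\lambda)|^p$ and using $|v(\tau,\lambda)|\lesssim A\,\lambda^{-m+1}\langle\lambda\rangle^{-1}\langle\tau+\lambda\rangle^{-1/2}\langle\tau-\lambda\rangle^{-\kappa}$, the output $Lv$ satisfies $r^{m-1}\langle r\rangle|Lv(t,r)|+r^m|\partial_r Lv(t,r)|\lesssim A^p\,\phi_\kappa(t,r)$, with a constant that is $o(1)$ relative to the size of the ball when combined with the factor $\langle\tau\rangle^{-\mu(p-1)/2}$ coming from \eqref{def Lv case n even}. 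The difference estimate $\|Lv-L\tilde v\|_{X_\kappa}\lesssim(\|v\|_{X_\kappa}^{p-1}+\|\tilde v\|_{X_\kappa}^{p-1})\|v-\tilde v\|_{X_\kappa}$ follows from the same computation together with $\big||a|^p-|b|^p\big|\lesssim(|a|^{p-1}+|b|^{p-1})|a-b|$.

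The main obstacle is the nonlinear estimate, and this is exactly where the three arithmetic conditions on the parameters enter. Plugging the ansatz into \eqref{def Lv case n even} produces, after performing the $\lambda$- and $\rho$-integrations in $\Theta(|u(\tau,\cdot)|^p)(t-\tau,r)$, a $\tau$-integral of a product of powers of $\langle\tau\rangle$, $\langle t-\tau\pm r\rangle$, and of $\lambda$-integrals of $\lambda^{2m+1-p(m-1)}\langle\lambda\rangle^{-p}\langle\tau-\lambda\rangle^{-p\kappa}\langle\tau+\lambda\rangle^{-p/2}$ against the kernel. Convergence of the $\lambda$-integral near $\lambda=0$ forces a lower bound on $p$ (this is where $p>p_0(n+\mu)$, equivalently a condition of Strauss type with the shift $\mu$, is used, together with the freedom to choose $\kappa$ slightly above the threshold $\kappa_1$); the decay needed at $\lambda=\infty$ and the $\tau$-integrability at $\tau\to\infty$ — where the damping gain $\langle\tau\rangle^{-\mu(p-1)/2}$ competes against the growth produced by the wave kernel in even dimensions — force the upper bounds $p<p_{\Fuj}\big(\tfrac{n+\mu-1}{2}\big)$ and $p<p_{\Fuj}(\mu)$, and the requirement that the window in \eqref{p assumption GET radial n>4 even} be nonempty is precisely what pins $\mu<M(n)$ with $M(n)$ as in \eqref{definition M1(n)}. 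I would organise this by first proving a clean pointwise bound for $\Theta(h)(t,r)$ and $\partial_r\Theta(h)(t,r)$ in terms of a weighted $L^\infty$ norm of $h$ (a lemma in the style of \cite{KuKu96}), then reducing the whole nonlinear estimate to a single scalar integral inequality
\[
\int_0^t\langle\tau\rangle^{-\frac\mu2(p-1)}\,\Phi_p(t-\tau,r)\,d\tau\lesssim \phi_\kappa(t,r),
\]
and verifying that inequality by splitting the $(t,r)$-plane into the regions $r\le t/2$, $t/2\le r\le 2t$ (the ``light-cone'' region, the delicate one), and $r\ge 2t$, and the $\tau$-integral correspondingly. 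The even-dimensional non-Huygens tail — the $w_2$ term supported in $0<\lambda<(t-r)_+$ — is what makes this strictly harder than the odd case of \cite{Pal18}, and controlling its contribution with exactly the same weight $\phi_\kappa$ is the crux.
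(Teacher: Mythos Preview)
Your overall architecture matches the paper's: transform to $v$, set up $Fv=v^0+Lv$ on $X_\kappa$, quote the linear bound $\|v^0\|_{X_\kappa}\lesssim\varepsilon$ from Proposition~\ref{thm lin probl n>=4 distr}, and close via a fixed-point argument using the nonlinear estimate $\|Lv\|_{X_\kappa}\lesssim\|v\|_{X_\kappa}^p$. The arithmetic bookkeeping you sketch (the role of $p>p_0(n+\mu)$ for the lower bound on $\kappa$ and of $p<p_{\Fuj}((n+\mu-1)/2)$, $p<p_{\Fuj}(\mu)$ for integrability in $\tau$) is also consistent with how the paper uses \eqref{upper bound for kappa n even}--\eqref{lower bound for kappa n even} and \eqref{p<p Fuj (mu1)} inside Proposition~\ref{Lemma I,J,P,Q gamma=0,1/2}.

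There is, however, a genuine gap in your contraction step. You assert the Lipschitz estimate
\[
\|Lv-L\tilde v\|_{X_\kappa}\lesssim\big(\|v\|_{X_\kappa}^{p-1}+\|\tilde v\|_{X_\kappa}^{p-1}\big)\,\|v-\tilde v\|_{X_\kappa}
\]
``from the same computation together with $\big||a|^p-|b|^p\big|\lesssim(|a|^{p-1}+|b|^{p-1})|a-b|$''. This does not work here, because the $X_\kappa$-norm controls $\partial_r v$ as well, and under the hypotheses of the theorem one always has $p<2$ (since $p<p_{\Fuj}\big(\tfrac{n+\mu-1}{2}\big)\le p_{\Fuj}\big(\tfrac{5}{2}\big)<2$ for $n\ge4$, $\mu\ge2$). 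Estimating $\partial_r(Lv-L\tilde v)$ through the representations \eqref{def W3+W4}--\eqref{def W5+W6} forces you to control $\partial_\lambda\big(|v|^p-|\tilde v|^p\big)$, and the map $s\mapsto |s|^{p-2}s$ is only $(p-1)$-H\"older, not Lipschitz. So a clean Lipschitz bound in the full $X_\kappa$-norm is simply false in this range.

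The paper resolves this by introducing the auxiliary seminorm
\[
\vertiii{v}_{X_\kappa}=\sup_{t,r}\,r^m|v(t,r)|\,\phi_\kappa(t,r)^{-1},
\]
which omits the derivative, and proving the pair of estimates \eqref{Lv -Lvbar lipschitz case n even}--\eqref{Lv -Lvbar holder case n even}: $L$ is Lipschitz with respect to $\vertiii{\cdot}_{X_\kappa}$, but in $\|\cdot\|_{X_\kappa}$ one only gets
\[
\|Lv-L\bar v\|_{X_\kappa}\lesssim \|v-\bar v\|_{X_\kappa}\big(\|v\|_{X_\kappa}^{p-1}+\|\bar v\|_{X_\kappa}^{p-1}\big)+\vertiii{v-\bar v}_{X_\kappa}^{\,p-1}\big(\|v\|_{X_\kappa}+\|\bar v\|_{X_\kappa}\big).
\]
The fixed point is then obtained not by a one-norm Banach contraction but by the two-norm scheme of \cite[Proposition~5.4]{KuKu95}: the Picard iterates stay bounded in $\|\cdot\|_{X_\kappa}$, are Cauchy in $\vertiii{\cdot}_{X_\kappa}$, and the mixed H\"older estimate upgrades convergence to $\|\cdot\|_{X_\kappa}$. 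Your outline needs this modification to go through.
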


For the proof of Theorem \ref{thm GER n even semi} it is necessary to modify some tools developed for the proof of the main theorem in \cite{KuKu96}. 

%
 
The remain part of the article is organized as follows: in Section \ref{Section lin eq} we will recall some known results for the linear problem, following the treatment of \cite{KuKu96}; hence, in Section \ref{Section preliminaries} some preparatory results are derived; also, using these preliminary estimates the proof of Theorem \ref{thm GER n even semi} is provided in Section \ref{Section proof of the main thm}. Finally, in Section \ref{Section final rmk} some final remarks and comments on open problems are given. 
\section{Linear equation} \label{Section lin eq}

In this section we recall some known estimates from \cite[Sections 3-4-5]{KuKu96}, which will be useful for the proof of Theorem \ref{thm GER n even semi}. 
In Section \ref{Section main result}, we introduced the definition of $\Theta(g)$. Now, we will show an alternative representation for $\Theta(g)$ and a representation for its $r$-derivative involving the kernels $K_{m-1}(\lambda,t,r)$ and $\widetilde{K}_{m-1}(\lambda,t,r)$.

\begin{lemma} Let $g$ be a $\mathcal{C}^1((0,\infty))$ function such that $g^{(j)}(\lambda)=O(\lambda^{-2m-j+\varsigma}) $ as $ \lambda\to 0^+$ for $j=0,1$ and some $\varsigma>0$. 
Then, it holds for $t\geq 0$, $r>0$ and $t\neq r$ 
\begin{align}\label{def theta(g) w3,w4}
2r^{2m}\Theta(g)(t,r)\doteq w_3(t,r)+w_4(t,r)
\end{align} with
\begin{align}
w_3(t,r) &\doteq  \int_{|t-r|}^{t+r} \partial_\lambda(\lambda^{2m}g(\lambda))\, K_{m-1}(\lambda,t,r) \, d\lambda , \label{def w3} \\
w_4(t,r) &\doteq  \int_{0}^{t-r} \partial_\lambda(\lambda^{2m}g(\lambda))\, \widetilde{K}_{m-1}(\lambda,t,r) \, d\lambda \qquad \mbox{for} \quad t>r , \label{def w4 t>r} \\
w_4(t,r) &\doteq  (r-t)^{2m}g(r-t) \,K_{m-1}(r-t,t,r) \,\,\qquad \mbox{for} \quad t<r , \label{def w4 t<r} 
\end{align} where $K_{m-1}$ and $\widetilde{K}_{m-1}$ are defined by \eqref{def Kj} and \eqref{def Ktildej}, respectively.

Furthermore, it holds for $t\geq 0$, $r>0$ and $t\neq r$  
\begin{align}\label{def dr theta(g) w5,w6}
\partial_r \big(2r^{2m}\Theta(g)(t,r)\big)\doteq w_5(t,r)+w_6(t,r),
\end{align} where we have set 
\begin{align}
w_5(t,r) &\doteq  \int_{|t-r|}^{t+r} \partial_\lambda(\lambda^{2m}g(\lambda)) \, \partial_r K_{m-1}(\lambda,t,r) \, d\lambda , \label{def w5} \\
w_6(t,r) &\doteq  \int_{0}^{t-r} \partial_\lambda(\lambda^{2m}g(\lambda))\, \partial_r\widetilde{K}_{m-1}(\lambda,t,r) \, d\lambda \qquad \mbox{for} \quad t>r , \label{def w6 t>r} \\
w_6(t,r) &\doteq  (r-t)^{2m}g(r-t)\, \partial_r K_{m-1}(r-t,t,r) \,\,\qquad \mbox{for} \quad t<r . \label{def w6 t<r}
\end{align}
\end{lemma}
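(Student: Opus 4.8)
The plan is to derive all four identities directly from the definition \eqref{def theta(g) w1,w2}--\eqref{def w2}, by transforming the $\lambda$-integrals through integration by parts and by differentiating under the integral sign; everything rests on the recursion that links $K_j$ to $K_{j-1}$ (and $\widetilde{K}_j$ to $\widetilde{K}_{j-1}$). First I would record the effect of the adjoint operator $(\frac{1}{2\rho}\frac{\partial}{\partial\rho})^* = \frac{\partial}{\partial\rho}(-\frac{1}{2\rho})$ on the kernels $H_j$. Since $H_j(\rho,t,r) = \big((\frac{1}{2\rho}\frac{\partial}{\partial\rho})^*\big)^j H(\rho-t,r)$, one has by definition $H_j = \frac{\partial}{\partial\rho}\big(-\frac{1}{2\rho} H_{j-1}\big)$, and a short computation shows that
\begin{align*}
\frac{H_j(\rho,t,r)}{\sqrt{\rho^2-\lambda^2}}
= \frac{\partial}{\partial\rho}\!\left(\frac{-H_{j-1}(\rho,t,r)}{2\rho\sqrt{\rho^2-\lambda^2}}\right)
+ \frac{-H_{j-1}(\rho,t,r)}{2\rho}\cdot\frac{\partial}{\partial\rho}\!\left(\frac{1}{\sqrt{\rho^2-\lambda^2}}\right)^{-1}\!\!\!,
\end{align*}
so that, after reorganizing, $\partial_\lambda$ applied to $\frac{1}{\sqrt{\rho^2-\lambda^2}}$ produces precisely $\frac{\lambda}{(\rho^2-\lambda^2)^{3/2}}$, and integrating the $\partial_\rho$-term over $\rho\in[\lambda,t+r]$ (resp. $[t-r,t+r]$) reduces $K_j$ to $K_{j-1}$ up to a boundary term at $\rho=\lambda$, which vanishes because $H(\lambda-t,r)$ and the lower kernels contain the factor $(r^2-(\rho-t)^2)^{m-1/2}$ with $m\ge1$. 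This gives the key relation $\partial_\lambda K_m(\lambda,t,r) = c\,\lambda\, K_{m-1}(\lambda,t,r)$ (and likewise for $\widetilde K$), which is really the content of \cite[Section 3.2]{KuKu96} and I would quote it in that form.

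With that relation in hand, the identity \eqref{def theta(g) w3,w4} follows by integration by parts in $\lambda$ in \eqref{def w1}--\eqref{def w2}. Writing $w_1 = \int_{|t-r|}^{t+r}\lambda\,(\lambda^{2m}g(\lambda))\,K_m\,d\lambda$ is not quite the right grouping; instead one uses $\partial_\lambda K_m = c\lambda K_{m-1}$ to integrate by parts so that the derivative lands on $\lambda^{2m}g(\lambda)$. The boundary contributions at $\lambda = t+r$ vanish because $K_m(t+r,t,r)=0$ (the $\rho$-integral is over the empty interval), and at $\lambda = |t-r|$ one checks the boundary term either cancels against the corresponding term coming from $w_2$ (in the region $t>r$, where both $w_1$ and $w_2$ are present and share the endpoint $t-r$) or, in the region $t<r$, survives as exactly the expression $(r-t)^{2m}g(r-t)K_{m-1}(r-t,t,r)$ in \eqref{def w4 t<r}. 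The growth hypothesis $g^{(j)}(\lambda)=O(\lambda^{-2m-j+\varsigma})$ is exactly what guarantees that $\lambda^{2m}g(\lambda)=O(\lambda^\varsigma)\to0$ and $\partial_\lambda(\lambda^{2m}g(\lambda))=O(\lambda^{\varsigma-1})$ is integrable near $\lambda=0^+$, so $w_3,w_4$ are well defined and no boundary term appears at the origin in $w_4$ for $t>r$.

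For the $r$-derivative identities \eqref{def dr theta(g) w5,w6}, I would differentiate the representation \eqref{def theta(g) w3,w4} (equivalently $w_3+w_4$) with respect to $r$, being careful about the $r$-dependence in the limits of integration and in the kernels. Differentiating $w_3$: the integrand vanishes at the upper limit $\lambda=t+r$ (since $K_{m-1}(t+r,t,r)=0$), and at the lower limit $\lambda=|t-r|$ the boundary term produced by $\partial_r|t-r|=\pm1$ combines with the boundary term from $\partial_r w_4$ — in the region $t>r$ these cancel, while in the region $t<r$ the only surviving term is $\partial_r\big[(r-t)^{2m}g(r-t)K_{m-1}(r-t,t,r)\big]$, and one must verify that the contribution from differentiating the explicit prefactor $(r-t)^{2m}g(r-t)$ is cancelled by the corresponding boundary term, leaving only $(r-t)^{2m}g(r-t)\,\partial_r K_{m-1}(r-t,t,r)$ as in \eqref{def w6 t<r}. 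What remains after all boundary terms are accounted for is $\int \partial_\lambda(\lambda^{2m}g(\lambda))\,\partial_r K_{m-1}\,d\lambda$ plus the analogous $\widetilde K_{m-1}$ term, which is \eqref{def w5}--\eqref{def w6 t<r}.

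The main obstacle is the careful bookkeeping of boundary terms at $\lambda=|t-r|$ and the matching between the $t>r$ and $t<r$ regimes: one must track which terms cancel between $w_1/w_2$ (resp. $w_3/w_4$) and which survive, and confirm that the surviving ones reassemble into exactly the stated closed-form expressions \eqref{def w4 t<r} and \eqref{def w6 t<r}. A secondary technical point is justifying differentiation under the integral sign and the integrability at $\lambda=0^+$, both of which are handled by the hypothesis $g^{(j)}(\lambda)=O(\lambda^{-2m-j+\varsigma})$; one should also note that these identities are claimed only for $t\neq r$, which is precisely where the behavior of the kernels near the light cone $\{t=r\}$ would otherwise cause trouble. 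Since the structure of the argument is identical to \cite[Section 3.2]{KuKu96}, where the ambient dimension plays the role of a free parameter, the proof is essentially a transcription of that computation, and I would present it in that spirit, emphasizing the recursion $\partial_\lambda K_j = c\lambda K_{j-1}$ and the boundary-term accounting rather than re-deriving the kernel identities from scratch.
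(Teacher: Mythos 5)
The paper does not prove this lemma at all: it simply cites \cite[Lemma 4.6]{KuKu96}. Your proposal reconstructs the argument behind that citation, and the overall mechanism you describe — convert the factor $\lambda^{2m+1}g(\lambda)K_m$ into a $\lambda$-derivative of the lower-index kernel, integrate by parts so the derivative lands on $\lambda^{2m}g(\lambda)$, track the boundary terms at $\lambda=|t-r|$ (cancellation between the $w_1$- and $w_2$-contributions when $t>r$ because $K_{m-1}(t-r,t,r)=\widetilde{K}_{m-1}(t-r,t,r)$, a surviving term giving \eqref{def w4 t<r} when $t<r$), and use the hypothesis $g^{(j)}(\lambda)=O(\lambda^{-2m-j+\varsigma})$ to kill the boundary term at $\lambda=0$ — is exactly the right one.

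However, your ``key relation'' is stated with the indices reversed, and the displayed computation meant to justify it is not coherent (the factor $\big(\tfrac{1}{\sqrt{\rho^2-\lambda^2}}\big)^{-1}$ makes no sense there). The correct identity, obtained by integrating by parts in $\rho$ inside \eqref{def Kj}--\eqref{def Ktildej} using $H_j=\partial_\rho\big(-\tfrac{1}{2\rho}H_{j-1}\big)$ and the vanishing of $H_{j-1}$ at $\rho=t\pm r$, is
\begin{align*}
\partial_\lambda K_{m-1}(\lambda,t,r)=-2\lambda\,K_m(\lambda,t,r),\qquad
\partial_\lambda \widetilde{K}_{m-1}(\lambda,t,r)=-2\lambda\,\widetilde{K}_m(\lambda,t,r),
\end{align*}
i.e.\ differentiating the \emph{lower}-index kernel produces the \emph{higher}-index one, not the other way around as you wrote ($\partial_\lambda K_m=c\lambda K_{m-1}$ is false). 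With the relation as you stated it, the integration by parts in $w_1=\int \lambda^{2m}g(\lambda)\cdot\lambda K_m\,d\lambda$ cannot be carried out, since one needs $\lambda K_m=-\tfrac12\partial_\lambda K_{m-1}$ to move the derivative onto $\lambda^{2m}g(\lambda)$; it is also this identity that produces the factor $2$ distinguishing \eqref{def theta(g) w1,w2} from \eqref{def theta(g) w3,w4}. The rest of your bookkeeping implicitly uses the correct form, so the fix is local, but as written the central step of the proof is wrong and must be corrected before the argument goes through.
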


\begin{proof}
See \cite[Lemma 4.6]{KuKu96}.
\end{proof}

Using the above described operator $\Theta$, we have seen how to provide through \eqref{def v0 even case} the representation formula for the solution to \eqref{CP radial lin n>=5} in the case $n$ even. 

In the next result, which describes the properties of the solution $v^0$ to \eqref{CP radial lin n>=5}. A condition, which allows the data to be possibly singular as $r\to 0^+$, will be introduced. For the proof of the forthcoming proposition one can see \cite[Theorem 2.1]{KuKu96}.

\begin{prop}\label{thm lin probl n>=4 distr} Let us consider an even integer $n\geq 4$ and radial initial data $f\in \mathcal{C}^2((0,\infty)), \,g\in \mathcal{C}^1((0,\infty))$ such that 
\begin{align}
|f^{(j)}(r)| & \leq \varepsilon\, r^{-m-j+1}\langle r\rangle^{-\kappa-\frac{3}{2}} \qquad \mbox{for} \quad j=0,1,2, \label{cond decay f n even}\\
|g^{(j)}(r)| & \leq \varepsilon \, r^{-m-j}\langle r\rangle^{-\kappa-\frac{3}{2}} \qquad \quad \mbox{for}  \quad j=0,1, \label{cond decay g n even}
\end{align} where the parameters $\varepsilon,\kappa$ satisfy $\varepsilon>0$ and $0<\kappa< m+\frac{1}{2}$.
Let  $v^0=v^0(t,r)$ be defined by \eqref{def v0 even case}. Then, $v^0\in \mathcal{C}^1([0,\infty)\times (\mathbb{R}^n\setminus \{0\}))\cap \mathcal{C}^2([0,\infty), \mathcal{D}'(\mathbb{R}^n))$ is the uniquely determined radial symmetric distributional solution to \eqref{CP radial lin n>=5}, in the following sense:
\begin{align*}
 & \frac{d^2}{dt^2} \langle v^0(t,\cdot),\psi\rangle =\langle v^0(t,\cdot),\Delta \psi\rangle \qquad \mbox{for any} \,\,\, t>0,  \\
 & \langle v^0(t,\cdot) , \psi\rangle \big|_{t=0} = \langle f,\psi\rangle,\quad  \frac{d}{dt}\langle v^0(t,\cdot) , \psi\rangle \big|_{t=0} = \langle g,\psi\rangle,
\end{align*} for any $\psi\in\mathcal{C}^\infty_0(\mathbb{R}^n)$, here $\langle \cdot,\cdot\rangle$ denotes the real scalar product in $L^2(\mathbb{R}^n)$ and $r=|x|$.

Besides, the solution $v^0$ fulfills the following decay estimates for any $t\geq 0$ and $r>0$:
\begin{equation} \label{decay estimate lin n>=4 even delta=1}
\begin{split}
|v^0(t,r)|&\lesssim \varepsilon \, r^{1-m} \langle r\rangle^{-1} \langle t+r\rangle^{-\frac{1}{2}} \langle t-r \rangle^{-\kappa}, \\
|\partial_r v^0(t,r)|&\lesssim \varepsilon \, r^{-m}  \langle t+r\rangle^{-\frac{1}{2}} \langle t-r \rangle^{-\kappa}.
\end{split}
\end{equation}
\end{prop}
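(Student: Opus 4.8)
The plan is to exploit the solution-operator structure behind the representation formula. Writing $S(t)h\doteq\Theta(h)$ for the operator \eqref{def theta(g) w1,w2}, formula \eqref{def v0 even case} reads $c_n v^0=S(t)g+\partial_t S(t)f$. The first step is to establish that, for admissible data $h$, the function $S(t)h$ is a distributional solution of the homogeneous radial equation in \eqref{CP radial lin n>=5} attaining the Cauchy data $(0,c_n h)$. This is precisely the content of the construction in \cite[Section 3]{KuKu96}, obtained by the method of descent from the odd-dimensional Sonine formula; I would quote it after checking that the decay hypotheses \eqref{cond decay f n even}--\eqref{cond decay g n even} guarantee absolute convergence of the defining integrals $w_1,w_2$ and legitimize differentiation under the integral sign. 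The singular behaviour allowed as $r\to0^+$ is absorbed by the prefactor $r^{-2m}$ in \eqref{def theta(g) w1,w2}, which is exactly why the equation can only be claimed distributionally near the axis; note that under \eqref{cond decay f n even}--\eqref{cond decay g n even} the data are locally integrable against $r^{n-1}\,dr$, since $r^{1-m}r^{n-1}=r^{(n+2)/2}$ is integrable at the origin.

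Granting this, the initial conditions for $v^0$ follow from the operator identity alone. Since $S(t)h$ solves $\partial_t^2\big(S(t)h\big)=\big(\partial_r^2+\tfrac{n-1}{r}\partial_r\big)\big(S(t)h\big)$ with $S(0)h=0$ and $\partial_t S(0)h=c_n h$, I would compute $\partial_t^2 S(0)f=\big(\partial_r^2+\tfrac{n-1}{r}\partial_r\big)S(0)f=0$, so that $\partial_t S(t)f$ meets the data $(c_n f,0)$; combining, $v^0|_{t=0}=f$ and $\partial_t v^0|_{t=0}=g$ in the distributional pairing of the statement, and $\tfrac{d^2}{dt^2}\langle v^0,\psi\rangle=\langle v^0,\Delta\psi\rangle$ is inherited by linearity. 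Uniqueness within the class of radial distributional solutions I would reduce to the standard uniqueness for the wave Cauchy problem, via finite propagation speed and an energy argument after localizing away from $r=0$: the admissible singularity at the origin is too mild to support a nontrivial solution with vanishing data.

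The substantial part is the pair of pointwise estimates \eqref{decay estimate lin n>=4 even delta=1}. Here I would not work directly with $w_1,w_2$, whose kernels $K_m,\widetilde K_m$ carry the full $m$-fold application of $(\tfrac{1}{2\rho}\partial_\rho)^*$, but pass to the integrated-by-parts representation \eqref{def theta(g) w3,w4}--\eqref{def w4 t<r} and its $r$-derivative \eqref{def dr theta(g) w5,w6}--\eqref{def w6 t<r}, lowering the kernel order to $K_{m-1},\widetilde K_{m-1}$ at the cost of a derivative on $\lambda^{2m}g(\lambda)$. The programme is then: (i) estimate $K_{m-1},\widetilde K_{m-1}$ and their $r$-derivatives, using that $H_j(\rho,t,r)$ is a polynomial in $(r^2-(\rho-t)^2)$ times negative powers of $\rho$, so the $\rho$-integral with the singular weight $(\rho^2-\lambda^2)^{-1/2}$ is controlled by an elementary factor times $\langle t+r\rangle^{-1/2}$; (ii) insert the bound $|\partial_\lambda(\lambda^{2m}g(\lambda))|\lesssim\varepsilon\,\lambda^{m-1}\langle\lambda\rangle^{-\kappa-3/2}$ coming from \eqref{cond decay g n even}; and (iii) carry out the $\lambda$-integrations over $[|t-r|,t+r]$ and over $[0,(t-r)_+]$, tracking the weights $\langle t-r\rangle^{-\kappa}$ and the prefactors $r^{-2m}$, $r^{1-m}$. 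The $f$-part $\partial_t S(t)f$ is handled by the same kernel estimates after commuting $\partial_t$ through the representation, using \eqref{cond decay f n even}.

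I expect the decisive obstacle to be controlling the \emph{interior} contributions $w_2$ (equivalently $w_4$ and $w_6$), i.e.\ the region $0\leq\lambda\leq t-r$, which is present precisely because Huygens' principle fails in even dimension. The exterior piece $w_1$ concentrates near the light cone $\lambda\approx|t-r|$ and produces the sharp factor $\langle t-r\rangle^{-\kappa}$ routinely, whereas the interior integral accumulates over the whole past and must be shown to decay no worse than the same power; this relies on the cancellations encoded in the adjoint operators $(\tfrac{1}{2\rho}\partial_\rho)^*$, and it is there that the constraint $0<\kappa<m+\tfrac12$ enters, guaranteeing convergence of the $\lambda$-integrations at both endpoints. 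These kernel and interior estimates are exactly those of \cite[Sections 4--5]{KuKu96}; my proof would consist in verifying that they apply under the present normalization $c_n$ and data class, and then assembling them into \eqref{decay estimate lin n>=4 even delta=1}.
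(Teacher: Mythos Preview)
Your proposal is correct and aligns with the paper's approach: the paper does not give its own proof of this proposition but simply refers to \cite[Theorem 2.1]{KuKu96}, and your sketch accurately summarizes the structure of that argument (the operator identity for the Cauchy data, the passage to the $w_3$--$w_6$ representation with kernels $K_{m-1},\widetilde K_{m-1}$, and the interior/exterior splitting where the restriction $0<\kappa<m+\tfrac12$ enters). In effect you have supplied a more detailed roadmap of the cited proof than the paper itself does.
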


\begin{rmk} In the setting of the previous theorem it is possible to derive stronger decay estimates than those we have written in the statement, see \cite[Proposition 4.9]{KuKu96}. Nevertheless, \eqref{decay estimate lin n>=4 even delta=1} is enough for our purposes, in order to prove the global existence result for the semilinear radial symmetric Cauchy problem \eqref{semi rad n>=5} for $n\geq 4$ even. Indeed, it follows by \eqref{decay estimate lin n>=4 even delta=1} that
\begin{align}
\| v^0\|_{X_\kappa}\lesssim \varepsilon  \label{control linear part of the solution n>=4} \qquad \mbox{for} \quad 0<\kappa<m+\tfrac{1}{2}.
\end{align} 
\end{rmk}

Finally, let us recall some known estimates for the kernels $K_j(\lambda,t,r)$ and $\widetilde{K}_j(\lambda,t,r)$, which are going to be used in the treatment of the semilinear case. For the proof of the next lemmas see \cite[Lemma 4.2 and Corollary 4.3]{KuKu96}.

\begin{lemma} \label{Lemma Kj and Ktildej j=m-1,m} Let $\gamma\in \{0,\frac{1}{2}\}$ and let $t\geq 0, r>0$. Let us consider $\alpha=0,1$. Then, we have for $|t-r|<\lambda < t+r$
\begin{align}
|K_m(\lambda,t,r)| &\lesssim r^{m+\gamma-\frac{1}{2}} \lambda^{-m-\gamma} (\lambda-t+r)^{-\frac{1}{2}} , \label{estimate K m} \\
|\partial^\alpha_r K_{m-1}(\lambda,t,r)| &\lesssim r^{m+\gamma+\frac{1}{2}-\alpha} \lambda^{-m-\gamma+1} (\lambda-t+r)^{-\frac{1}{2}} , \label{estimate K m-1} 
\end{align} while for $0<\lambda< t-r$ we get
\begin{align}
|\widetilde{K}_m(\lambda,t,r)| &\lesssim r^{m+\gamma-\frac{1}{2}} (t-r)^{-m-\gamma} (t-r-\lambda)^{-\frac{1}{2}} , \label{estimate Ktilde m} \\
|\partial^\alpha_r \widetilde{K}_{m-1}(\lambda,t,r)| &\lesssim r^{m+\gamma+\frac{1}{2}-\alpha} (t-r)^{-m-\gamma+1} (t-r-\lambda)^{-\frac{1}{2}}. \label{estimate Ktilde m-1} 
\end{align}
\end{lemma}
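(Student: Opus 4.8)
The two displayed estimates share the same two-layer structure: a pointwise bound on the profile $H_j$ entering \eqref{def Kj}--\eqref{def Ktildej}, followed by an estimate of the $\rho$-integral against the weight $(\rho^2-\lambda^2)^{-1/2}$. The plan is therefore first to prove, on the support $\{|\rho-t|\le r\}$, the pointwise bounds
\[
|H_j(\rho,t,r)|\lesssim \rho^{-j} r^{j}\,(r^2-(\rho-t)^2)^{m-\frac12-j}\quad(0\le j\le m),\qquad |\partial_r H_{m-1}(\rho,t,r)|\lesssim \rho^{-(m-1)}r^{m}\,(r^2-(\rho-t)^2)^{-\frac12},
\]
and then to feed the cases $j=m$, $j=m-1$, together with the last bound, into the kernel integrals.

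For the pointwise step, write $D\doteq(\tfrac{1}{2\rho}\tfrac{\partial}{\partial\rho})^*=\tfrac{1}{2\rho^2}-\tfrac{1}{2\rho}\partial_\rho$, so that $H_j=D^jH_0$ with $H_0=(r^2-(\rho-t)^2)^{m-\frac12}$. Two observations suffice. First, $D$ lowers joint homogeneity in $(\rho,t,r)$ by $2$, so $H_j$ is homogeneous of degree $2m-1-2j$; this already matches the homogeneity of every right-hand side in the statement and fixes all the exponents. Second, expanding $D^j$ by the Leibniz rule, $H_j$ is a finite sum of terms $\rho^{-(2j-b)}\partial_\rho^{\,b}H_0$ with $0\le b\le j$, and each $\partial_\rho^{\,b}H_0$ is in turn a sum of terms $(\rho-t)^{2k-b}(r^2-(\rho-t)^2)^{m-\frac12-k}$ with $\lceil b/2\rceil\le k\le b$. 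The strongest singularity at the light-cone points $\rho=t\pm r$ comes from $b=k=j$, which yields the displayed bound after using $|\rho-t|\le r$; the remaining terms are strictly less singular and obey the same bound. Differentiating in $r$ lowers the exponent of $(r^2-(\rho-t)^2)$ by one and inserts a factor $r$, giving the bound for $\partial_r H_{m-1}$ (the endpoint behaviour passes from the integrable power $+\tfrac12$ to the power $-\tfrac12$).

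For the integration, factor $r^2-(\rho-t)^2=(t+r-\rho)(\rho-t+r)$ and $\rho^2-\lambda^2=(\rho-\lambda)(\rho+\lambda)$. Inserting the bound for $H_m$ (and likewise $\partial_r H_{m-1}$) into \eqref{def Kj} and pulling out the monotone factor $\rho^{-m}\le\lambda^{-m}$ (resp. $\rho^{-(m-1)}\le\lambda^{-(m-1)}$) reduces everything to the four-singularity integral
\[
\mathcal I(\lambda,t,r)\doteq\int_{\lambda}^{t+r}\frac{d\rho}{\sqrt{(\rho-\lambda)(\rho+\lambda)(t+r-\rho)(\rho-t+r)}} .
\]
Here the two singularities at the integration endpoints $\rho=\lambda$ and $\rho=t+r$ are the genuine integrable ones (their contribution being governed by the Beta-type identity $\int_a^b(\rho-a)^{-1/2}(b-\rho)^{-1/2}\,d\rho=\pi$), while the points $\rho=-\lambda$ and $\rho=t-r$ lie strictly to the left of the interval. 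The central estimate is the sharp bound
\[
\mathcal I(\lambda,t,r)\lesssim \frac{1}{\sqrt{(t+r)(\lambda-t+r)}} ,
\]
which exhibits both the singular weight $(\lambda-t+r)^{-1/2}$ of the statement — produced, as $\lambda\to(t-r)^+$, by the collision of the endpoint $\rho=\lambda$ with the exterior light-cone point $\rho=t-r$ — and a gain $(t+r)^{-1/2}$. The two values of $\gamma$ then follow immediately: since $t+r\ge\lambda$ we may replace $(t+r)^{-1/2}$ by $\lambda^{-1/2}$, giving $\gamma=\tfrac12$, and since $t+r\ge r$ we may replace it by $r^{-1/2}$, giving $\gamma=0$. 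Both $\alpha=0,1$ for $K_{m-1}$ are covered by the two pointwise bounds on $H_{m-1}$ and $\partial_r H_{m-1}$ (for $K_{m-1}$ itself the profile vanishes at the cone, so the relevant integral is even milder). The tilde kernels are treated identically: the integration runs over the full support $(t-r,t+r)$, the role of the moving endpoint is played by $\rho=t-r$, the weight becomes $(t-r-\lambda)^{-1/2}$, the prefactor $\lambda^{-m}$ is replaced by $(t-r)^{-m}$, and the analogous master bound is $\mathcal I$ with $(\lambda-t+r)$ replaced by $(t-r-\lambda)$.

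The main obstacle is the sharp master estimate for $\mathcal I$ (and its tilde analogue). The crude move of bounding the two exterior factors $(\rho+\lambda)^{-1/2}$ and $(\rho-t+r)^{-1/2}$ by their values at the nearest endpoint $\rho=\lambda$ only yields $\mathcal I\lesssim(\lambda(\lambda-t+r))^{-1/2}$, which is too lossy to produce the $(t+r)^{-1/2}$ gain — and hence fails to deliver the $\gamma=0$ estimate — in the regime $\lambda\ll r$ near the lower light cone. Obtaining the factor $(t+r)^{-1/2}$ requires genuinely exploiting that $(t+r-\rho)^{-1/2}$ is of size $(t+r)^{-1/2}$ on the bulk of the interval, i.e.\ a careful elliptic-integral-type computation (e.g.\ via the substitution rationalising the two endpoint singularities, or by splitting the interval at its midpoint and estimating each half), carried out uniformly in the relative sizes of $\lambda$, $r$, $t$ and in the sign of $t-r$. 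Once this estimate is in place, the remainder is the routine bookkeeping described above.
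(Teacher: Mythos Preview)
The paper does not give its own proof of this lemma; it simply cites \cite[Lemma 4.2 and Corollary 4.3]{KuKu96}. Your outline is in the spirit of the Kubo--Kubota argument and the two-layer structure (pointwise bounds on $H_j$, then integration against $(\rho^2-\lambda^2)^{-1/2}$) is the right one. Two points deserve attention.

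First, your justification of the pointwise bound $|H_j|\lesssim \rho^{-j}r^{j}(r^2-(\rho-t)^2)^{m-\frac12-j}$ is too quick. Using only $|\rho-t|\le r$ on the generic term $\rho^{-(2j-b)}(\rho-t)^{2k-b}(r^2-(\rho-t)^2)^{m-\frac12-k}$ gives $\rho^{-(2j-b)}r^{2j-b}(r^2-(\rho-t)^2)^{m-\frac12-j}$, and comparing with the target leaves a factor $(r/\rho)^{j-b}$ that is unbounded for $\rho<r$ (which does occur when $0<t<2r$). What rescues the bound is the elementary but crucial inequality
\[
r^{2}-(\rho-t)^{2}=(t+r-\rho)(\rho-t+r)\le 4\,r\rho\qquad\text{for }|\rho-t|\le r,\ \rho>|t-r|,
\]
since $t+r-\rho\le 2r$ and $\rho-t+r\le 2\rho$ on this set. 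Using $(r^2-(\rho-t)^2)^{j-k}\le r^{2(b-k)}(4r\rho)^{j-b}$ together with $|\rho-t|^{2k-b}\le r^{2k-b}$ (the choice $a=b-k$ of how many of the $j-k$ factors are bounded by $r^2$ versus by $4r\rho$) gives exactly $\rho^{-j}r^{j}(r^2-(\rho-t)^2)^{m-\frac12-j}$ for every term. So the bound is correct, but the ``remaining terms obey the same bound'' step needs this extra ingredient.

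Second, you are right that the sharp estimate $\mathcal I\lesssim ((t+r)(\lambda-t+r))^{-1/2}$ is the heart of the matter, and you are honest that you do not prove it. It is worth noting that $\mathcal I$ is a complete elliptic integral: with the four roots $-\lambda<t-r<\lambda<t+r$ (say $t>r$) one has $\mathcal I=(r\lambda)^{-1/2}K(k)$ with $1-k^2=\dfrac{(\lambda+r-t)(\lambda+r+t)}{4r\lambda}$, so the logarithmic blow-up of $K(k)$ as $k\to 1$ produces a factor $\log\bigl(\tfrac{r\lambda}{(t+r)(\lambda-t+r)}\bigr)$. Since $(t+r)(\lambda-t+r)\lesssim r\lambda$ always holds (from $\lambda-t+r\le 2\min(\lambda,r)$ and $t+r\lesssim\max(\lambda,r)$), this logarithm is indeed absorbed by the square-root gain, confirming your claimed bound; but establishing this uniformly is precisely the ``elliptic-integral-type computation'' you allude to. If you want to avoid special functions, the midpoint splitting you suggest works, but on the lower half you must keep the factor $(t+r-\rho)^{-1/2}$ rather than freezing it at the endpoint, and then integrate $(\rho-\lambda)^{-1/2}(t+r-\rho)^{-1/2}$ exactly.
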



\begin{lemma} \label{Lemma Ktildej der} Let $\gamma\in \{0,\frac{1}{2}\}$ and let $t\geq 0, r>0$.  Let us consider $j=0,1,\cdots,m$ and $\alpha\in \mathbb{N}$ such that $j+\alpha\leq m$.  If $0<\lambda< t-r$, then, 
\begin{align}\label{estimate Ktilde j der}
|\partial_\lambda \partial^\alpha_r \widetilde{K}_j(\lambda,t,r)|\lesssim r^{2m-j+\gamma-\frac{1}{2}-\alpha}(t-r)^{-j-\gamma}(t-r-\lambda)^{-\frac{3}{2}}.
\end{align}
\end{lemma}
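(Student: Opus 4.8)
The plan is to differentiate the integral representation \eqref{def Ktildej} directly. Since in \eqref{def Ktildej} the variable $\lambda$ enters only through the factor $(\rho^2-\lambda^2)^{-1/2}$, whose $\lambda$-derivative equals $\lambda\,(\rho^2-\lambda^2)^{-3/2}$, and since $\partial_\lambda$ commutes with $\partial_r^\alpha$ on $\{t\ne r\}$, for $0<\lambda<t-r$ one has
\begin{align*}
\partial_\lambda\partial_r^\alpha\widetilde{K}_j(\lambda,t,r)=\partial_r^\alpha\!\int_{t-r}^{t+r}H_j(\rho,t,r)\,\frac{\lambda}{(\rho^2-\lambda^2)^{3/2}}\,d\rho .
\end{align*}
Comparing the target bound with \eqref{estimate Ktilde m}--\eqref{estimate Ktilde m-1}, the powers of $r$ and of $(t-r)$ coincide with those in the estimates for $\partial_r^\alpha\widetilde{K}_j$, while the exponent of $(t-r-\lambda)$ drops from $-\tfrac12$ to $-\tfrac32$. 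This is exactly the cost of the extra $\partial_\lambda$: the only $\lambda$-singularity of the kernel sits at the lower endpoint $\rho=t-r$, where $(\rho^2-\lambda^2)^{-1/2}\approx(t-r-\lambda)^{-1/2}(t-r)^{-1/2}$, so that $\partial_\lambda$ turns $(t-r-\lambda)^{-1/2}$ into $(t-r-\lambda)^{-3/2}$ without touching the spatial weights. In fact the stated power $(t-r-\lambda)^{-3/2}$ is a convenient over-estimate: the genuine singularity produced by the integral is milder, which is what makes the bound attainable.

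To make this rigorous I would follow the scheme of \cite{KuKu96} used to prove \eqref{estimate Ktilde m}--\eqref{estimate Ktilde m-1}. First I would perform the $r$-differentiation under the integral sign, keeping track of the dependence of the limits $t\pm r$ on $r$. The boundary contributions at $\rho=t+r$ are harmless, since there $(\rho^2-\lambda^2)^{-3/2}$ is bounded (recall $\lambda<t-r<t+r$); those at $\rho=t-r$ are controlled using that $H(\rho-t,r)=(r^2-(\rho-t)^2)^{m-1/2}$ vanishes at $\rho=t-r$ to order $m-\tfrac12$, so that $H_j$ and its $r$-derivatives up to the required order vanish there as long as $j+\alpha\le m$. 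Second, I would insert the pointwise bounds for $\partial_r^\beta H_j$ obtained from the explicit structure generated by the $j$-fold application of $\partial_\rho(-\tfrac1{2\rho})$ to $H$; near $\rho=t-r$ these behave, up to powers of $r$ and of $t-r$, like $(\rho-(t-r))^{m-j-\frac12-\beta}$.

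Third, writing $\xi=\rho-(t-r)$ and using $\lambda\,(\rho+\lambda)^{-3/2}\lesssim(t-r)^{-1/2}$ near the endpoint, the problem reduces to the model integral
\begin{align*}
\int_0^{2r}\xi^{\,m-j-\frac12-\beta}\big((t-r-\lambda)+\xi\big)^{-\frac32}\,d\xi ,
\end{align*}
which I would estimate by the rescaling $\xi=(t-r-\lambda)\eta$. Collecting the resulting powers of $r$ and of $t-r$ yields the three weights in the statement, with the free parameter $\gamma\in\{0,\tfrac12\}$ encoding the two admissible ways of splitting one half-power between $r$ and $t-r$. The comparison must be carried out separately in the regimes $r\lesssim t-r$ and $r\gtrsim t-r$ (in the latter the integral is governed by its upper cut-off $\xi\approx r$ rather than by the endpoint), since only the combination of the two cases delivers the claimed bound uniformly for both values of $\gamma$.

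The main obstacle is precisely this uniform bookkeeping at the singular endpoint: one must show that neither the boundary terms nor the interior integral exceeds $r^{2m-j+\gamma-1/2-\alpha}(t-r)^{-j-\gamma}(t-r-\lambda)^{-3/2}$ simultaneously for $\gamma=0$ and $\gamma=\tfrac12$, for every admissible pair $(j,\alpha)$ with $j+\alpha\le m$, and in particular in the case $j=m$ (where $\alpha=0$ and $H_m$ is itself singular at $\rho=t-r$). As a cross-check I would use the integration-by-parts identity $H_j=\partial_\rho\!\big(-\tfrac{1}{2\rho}H_{j-1}\big)$, which gives
\begin{align*}
\partial_\lambda\widetilde{K}_{j-1}(\lambda,t,r)=-\frac{\lambda\,H_{j-1}(\rho,t,r)}{\rho\sqrt{\rho^2-\lambda^2}}\bigg|_{\rho=t-r}^{\rho=t+r}-2\lambda\,\widetilde{K}_{j}(\lambda,t,r),
\end{align*}
displaying the $\partial_\lambda$-derivative of $\widetilde{K}_{j-1}$ as an endpoint term plus a multiple of $\widetilde{K}_{j}$ and confirming that the spatial weights are inherited from one index higher while the $\lambda$-singularity is concentrated in the boundary contribution.
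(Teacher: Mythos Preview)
The paper does not prove this lemma: it simply cites \cite[Lemma 4.2 and Corollary 4.3]{KuKu96}. So there is no in-text argument to compare your proposal against; what you have written is an attempt to reconstruct the proof from that reference.

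Your plan is structurally sound, and the integration-by-parts identity you derive at the end,
\[
\partial_\lambda\widetilde{K}_{j-1}(\lambda,t,r)=-\frac{\lambda\,H_{j-1}(\rho,t,r)}{\rho\sqrt{\rho^2-\lambda^2}}\bigg|_{\rho=t-r}^{\rho=t+r}-2\lambda\,\widetilde{K}_{j}(\lambda,t,r),
\]
is correct and is in fact the key recursive relation used in \cite{KuKu96}: it expresses $\partial_\lambda\widetilde{K}_{j-1}$ as a boundary term plus $-2\lambda\widetilde{K}_j$, so the estimate for $\partial_\lambda\widetilde{K}_{j-1}$ follows from the already-known bound on $\widetilde{K}_j$ together with a direct estimate of the boundary contribution at $\rho=t-r$. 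This recursion, iterated and combined with the analogous one for $\partial_r$, is exactly how the full range $j+\alpha\le m$ is handled there. Your earlier heuristic (direct differentiation under the integral, endpoint analysis, rescaling) would also work, but the recursive route is cleaner and avoids the case-by-case bookkeeping you flag as ``the main obstacle''.

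What you have submitted, however, is an outline rather than a proof: the phrases ``I would follow'', ``I would insert'', ``I would estimate'' leave every quantitative step unexecuted. In particular you never actually bound the boundary term at $\rho=t-r$ (which carries the full $(t-r-\lambda)^{-3/2}$ singularity, since $H_{j-1}(t-r,t,r)\ne 0$ in general), nor do you verify that $2\lambda\,|\widetilde{K}_j|$ fits under the stated bound using $\lambda<t-r$ and $(t-r-\lambda)^{-1/2}\le(t-r-\lambda)^{-3/2}\cdot(t-r)$. These are the two concrete computations that would turn your plan into a proof; both are short once written out, and the recursive identity you already have is the right starting point.
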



\section{Preliminaries} \label{Section preliminaries}

In this section, we derive some estimates which will be fundamental in the proof of Theorem \ref{thm GER n even semi}, making it more fluent.

Throughout this section we assume the following conditions on $p>1$ and $\kappa$: for the exponent of the nonlinearity  $p$ we require
\begin{align}
 & p_0(n+\mu)<p< p_{\Fuj}\big(\tfrac{n+\mu-1}{2}\big); \label{p>p0(n+mu1) prel n even} 
\end{align}  while for the parameter $\kappa$, which appears in the definition of the norm on $X_\kappa$, we require as upper and lower bounds
\begin{align}
 & 0<\kappa \leq q+\tfrac{\mu}{2}(p-1)= \tfrac{n+\mu-1}{2}p -\tfrac{n+\mu+1}{2}, \label{upper bound for kappa n even} \\
 & k> \tfrac{-q+1}{p-1}-\tfrac{\mu}{2}=\tfrac{2}{p-1}-\tfrac{n+\mu-1}{2} \qquad \Leftrightarrow \qquad p\kappa +q+\tfrac{\mu}{2}(p-1)>\kappa+1, \label{lower bound for kappa n even}  
\end{align} respectively, where 
\begin{align}\label{def q n even}
q\doteq \tfrac{n-1}{2}p-\tfrac{n+1}{2}.
\end{align}

In particular, \eqref{p>p0(n+mu1) prel n even} implies the nonemptiness of the range for $\kappa$, since the upper bound for $p$ provides a positive lower bound for $\kappa$ in \eqref{upper bound for kappa n even}, while the lower bound for $p$ is equivalent to require the validity of the relation
$\tfrac{-q+1}{p-1}-\tfrac{\mu}{2}< q+\tfrac{\mu}{2}(p-1)$, which provides the compatibility between \eqref{upper bound for kappa n even} and \eqref{lower bound for kappa n even}. Besides, the range for $p$ is not empty since $p_0(n+\mu)< p_{\Fuj}\big(\tfrac{n+\mu-1}{2}\big)$ is always true. 

In Section \ref{Section main result} we defined the integral operator $L$. In order to estimate the integrand in \eqref{def Lv case n even}, similarly to \eqref{def theta(g) w1,w2}, \eqref{def theta(g) w3,w4} and \eqref{def dr theta(g) w5,w6}, the following representations are valid for any $ 0\leq \tau \leq t$ and $ r>0$ such that $t\neq r$:
\begin{align}
r^{2m} \Theta(|u(\tau,\cdot)|^p)(t,r) &= W_1(t,r;\tau) +W_2(t,r;\tau), \label{def W1+W2} \\
2r^{2m} \Theta(|u(\tau,\cdot)|^p)(t,r) &= W_3(t,r;\tau) +W_4(t,r;\tau), \label{def W3+W4} \\
\partial_r\big(2r^{2m} \Theta(|u(\tau,\cdot)|^p)(t,r)\big) &= W_5(t,r;\tau) +W_6(t,r;\tau), \label{def W5+W6} 
\end{align} where $W_i(t,r;\tau)$, for $i=1,\cdots,6$, is defined analogously to $w_i(t,r)$ by substituting $|u(\tau,\lambda)|^p$ in place of $g(\lambda)$ in \eqref{def w1}, \eqref{def w2}, \eqref{def w3}, \eqref{def w4 t>r}, \eqref{def w4 t<r}, \eqref{def w5}, \eqref{def w6 t>r} and \eqref{def w6 t<r}.

We introduce now a quantity which prescribes somehow the decay rate  we allow for the nonlinearity $|v|^p$. 
We define for any $v\in X_\kappa$, $j=0,1$ and $\nu\in \mathbb{R}$ the quantity
\begin{align}\label{def Nj nu v^p}
N_j^\nu(|v|^p)\doteq  \sup_{t\geq 0,r>0} \,\big|\partial^j_\lambda \big(\lambda^{2m} |v(\tau,\lambda)|^p\big)\big|\lambda^{-m-\nu+j}\langle \lambda \rangle^{q-\frac{p}{2}+\frac{3}{2}+\nu-j}\phi_\kappa(\tau,\lambda)^{-p},
\end{align} where $q$ and $\phi_\kappa$ are defined by \eqref{def q n even} and \eqref{def phi kappa}, respectively. 



Let us prove now some preliminary lemmas  which are going to be useful in the proof of the last proposition of this section. A fondamental tool for their proofs is the next estimate, which is taken from \cite{KuKu95} (cf. Lemma 4.7).

If $a,b\geq 0$ satisfy $a+b>1$, then, it holds 
\begin{align} \label{KuKu95 lemma}
\int_\mathbb{R} \langle x\rangle ^{-a} \langle x+y \rangle^{-b} \, dx \lesssim 1    \qquad \mbox{for any} \quad y \in \mathbb{R}.
\end{align}

%

\begin{lemma} \label{Lemma 1st integral n even}
Let us consider $p,\kappa$ satisfying \eqref{p>p0(n+mu1) prel n even}, \eqref{upper bound for kappa n even} and \eqref{lower bound for kappa n even} and let $q$ be defined by \eqref{def q n even}. Then, we have for any $y\in \mathbb{R}$
\begin{align*}
\int_\mathbb{R} \langle x\rangle^{-p\kappa} \langle x+y \rangle^{-q-\frac{\mu}{2}(p-1)} \,dx \lesssim \langle y\rangle ^{-\kappa}. 
\end{align*}
\end{lemma}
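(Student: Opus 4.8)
The plan is to reduce the claimed bound to the elementary integral estimate \eqref{KuKu95 lemma} after a splitting of the real line according to the size of $|x|$ and $|x+y|$. First I would record the two exponents at play: $a = p\kappa$ and $b = q + \tfrac{\mu}{2}(p-1)$. The hypotheses \eqref{upper bound for kappa n even} and \eqref{lower bound for kappa n even} are exactly what control these: \eqref{upper bound for kappa n even} says $\kappa \le q + \tfrac{\mu}{2}(p-1) = b$, so in particular $b \ge \kappa > 0$, and also $b = q + \tfrac{\mu}{2}(p-1) \ge \kappa$ gives $b - \kappa \ge 0$; \eqref{lower bound for kappa n even} is equivalent to $p\kappa + q + \tfrac{\mu}{2}(p-1) > \kappa + 1$, i.e. $a + b > \kappa + 1$, which after subtracting $\kappa$ reads $(a-\kappa) + b > 1$ — this will be the decisive inequality. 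Also $a = p\kappa > \kappa \ge 0$ since $p > 1$, so $a - \kappa > 0$.

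The main step is a dyadic-type region decomposition. On the region where $|x| \gtrsim |y|$ (say $|x| \ge |y|/2$), I would bound $\langle x \rangle^{-p\kappa} = \langle x\rangle^{-\kappa}\langle x\rangle^{-(p\kappa - \kappa)} \lesssim \langle y\rangle^{-\kappa} \langle x\rangle^{-(a-\kappa)}$, and then
\begin{align*}
\int_{|x|\ge |y|/2} \langle x\rangle^{-p\kappa}\langle x+y\rangle^{-b}\,dx \lesssim \langle y\rangle^{-\kappa} \int_{\mathbb{R}} \langle x\rangle^{-(a-\kappa)} \langle x+y\rangle^{-b}\,dx \lesssim \langle y\rangle^{-\kappa},
\end{align*}
where the last inequality is \eqref{KuKu95 lemma} applied with exponents $a-\kappa$ and $b$, legitimate precisely because $(a-\kappa)+b>1$ and both are nonnegative. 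On the complementary region $|x| < |y|/2$ one has $|x+y| \ge |y|-|x| > |y|/2$, so $\langle x+y\rangle^{-b} \lesssim \langle y\rangle^{-b} \le \langle y\rangle^{-\kappa}$ (using $b \ge \kappa$), and then $\int_{|x|<|y|/2}\langle x\rangle^{-p\kappa}\,dx \lesssim 1$ if $p\kappa > 1$; if instead $p\kappa \le 1$ one still gets $\int_{|x|<|y|/2}\langle x\rangle^{-p\kappa}\,dx \lesssim \langle y\rangle^{1-p\kappa}$, and combining with the factor $\langle y\rangle^{-b}$ gives $\langle y\rangle^{1-p\kappa - b} = \langle y\rangle^{1-a-b}$, which is $\lesssim \langle y\rangle^{-\kappa}$ exactly because $a+b > \kappa+1$. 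Either way the second region also contributes $\lesssim \langle y\rangle^{-\kappa}$.

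The only subtlety — and the step I expect to require the most care — is making sure the bookkeeping of exponents is done so that \eqref{KuKu95 lemma} is applicable in the first region (i.e. genuinely checking $a - \kappa \ge 0$ and $(a-\kappa)+b>1$ from the stated bounds on $\kappa$) and handling uniformly the borderline case $p\kappa \le 1$ in the second region, where one cannot simply bound $\int \langle x\rangle^{-p\kappa}\,dx$ by a constant. Both are resolved by the two inequalities $b \ge \kappa$ and $a+b > \kappa+1$ extracted from \eqref{upper bound for kappa n even}–\eqref{lower bound for kappa n even}; no information about $q$ or $\mu$ beyond these is needed. Once these are in place the estimate follows by adding the two regional bounds.
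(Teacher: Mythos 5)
Your argument is correct and follows essentially the same route as the paper's proof: both split the line according to whether $\langle x\rangle\gtrsim\langle y\rangle$ or $\langle x+y\rangle\gtrsim\langle y\rangle$, peel off the factor $\langle y\rangle^{-\kappa}$, and close the estimate via \eqref{KuKu95 lemma}, using exactly the two facts $\kappa(p-1)+q+\tfrac{\mu}{2}(p-1)>1$ from \eqref{lower bound for kappa n even} and $b\geq\kappa$ from \eqref{upper bound for kappa n even} (the paper cuts at $x=-y/2$ and applies \eqref{KuKu95 lemma} on both half-lines, whereas you treat the bounded piece $|x|<|y|/2$ by direct integration, a cosmetic difference). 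The one slip is at the borderline $p\kappa=1$, where $\int_{|x|<|y|/2}\langle x\rangle^{-p\kappa}\,dx$ is of order $\log\langle y\rangle$ rather than $\langle y\rangle^{1-p\kappa}=1$; this is harmless because the strict inequality $a+b>\kappa+1$ then gives $b>\kappa$, so the logarithm is absorbed by $\langle y\rangle^{-(b-\kappa)}$.
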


\begin{proof}
We follow \cite[Lemma 4.8]{KuKu95}. Let us denote $G(y)\doteq \int_\mathbb{R} \langle x\rangle^{-p\kappa} \langle x+y \rangle^{-q-\frac{\mu}{2}(p-1)} \,dx$. We consider first the case $y\geq 0$. We split $G(y)$ as follows:
\begin{align*}
G(y)&= \int _{-\infty}^{-\frac{y}{2}}\langle x\rangle^{-p\kappa} \langle x+y \rangle^{-q-\frac{\mu}{2}(p-1)} \, dx +\int_{-\frac{y}{2}}^{\infty} \langle x\rangle^{-p\kappa} \langle x+y \rangle^{-q-\frac{\mu}{2}(p-1)} \, dx \\ &\doteq G_1(y)+G_2(y).
\end{align*}

Since on the domain of integration of $G_1(y)$ it holds  $\langle x\rangle \gtrsim \langle y\rangle$ and $\kappa>0$, we get
\begin{align*}
G_1(y)\lesssim \langle y\rangle^{-\kappa} \int _{-\infty}^{-\frac{y}{2}}\langle x\rangle^{-\kappa(p-1)} \langle x+y \rangle^{-q-\frac{\mu}{2}(p-1)} \, dx \lesssim \langle y\rangle^{-\kappa},
\end{align*} where in the last inequality we may use  \eqref{KuKu95 lemma} thanks to \eqref{upper bound for kappa n even} and \eqref{lower bound for kappa n even}.

On the other hand, when $x\geq -\frac{y}{2}$ the inequality $\langle x+y\rangle \gtrsim \langle y \rangle$ is satisfied. Therefore, using again \eqref{KuKu95 lemma}, we find
\begin{align*}
G_2(y) \lesssim \langle y\rangle^{-\kappa} \int_{-\frac{y}{2}}^{\infty} \langle x\rangle^{-p\kappa} \langle x+y \rangle^{\kappa-q-\frac{\mu}{2}(p-1)} \, dx \lesssim \langle y\rangle^{-\kappa}.
\end{align*}

If $y\leq 0$, we get $G(y)=\int_\mathbb{R} \langle x-y\rangle^{-p\kappa} \langle x \rangle^{-q-\frac{\mu}{2}(p-1)} \,dx$. Then, splitting $G(y)$ on  $x\leq \frac{y}{2}$ and on $x\geq \frac{y}{2}$ and proceeding as before, we have the desired estimate.
\end{proof}

\begin{lemma} \label{Lemma 2nd integral n even, square root}
Let us consider $p,\kappa$ satisfying \eqref{p>p0(n+mu1) prel n even}, \eqref{upper bound for kappa n even} and \eqref{lower bound for kappa n even} and let $q$ be defined by \eqref{def q n even} such that $q\geq \tfrac{1}{2}$. Then, we have for any $y\geq 0$
\begin{align}
\int_{-y}^{-\frac{y}{2}} \langle x\rangle^{-\kappa p -\frac{\mu}{2}(p-1)} \frac{\langle x+y \rangle^{-q+\frac{1}{2}}}{\sqrt{x+y}} \,dx \lesssim \langle y\rangle ^{-\kappa}. \label{estimate 2nd integral n even, square root}
\end{align}
\end{lemma}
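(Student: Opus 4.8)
The plan is to reduce \eqref{estimate 2nd integral n even, square root} to the elementary one-dimensional estimate \eqref{KuKu95 lemma} after extracting the dominant factors from the integrand. On the interval of integration $-y\le x\le -\frac y2$ (with $y\ge 0$) we have $|x|\approx y$, so $\langle x\rangle\approx\langle y\rangle$. First I would pull out $\langle x\rangle^{-\kappa p-\frac{\mu}{2}(p-1)}\lesssim \langle y\rangle^{-\kappa p-\frac{\mu}{2}(p-1)}$. Since the conditions \eqref{p>p0(n+mu1) prel n even}, \eqref{upper bound for kappa n even}, \eqref{lower bound for kappa n even} on $p$ and $\kappa$ guarantee (via the lower bound for $\kappa$, rewritten as $p\kappa+q+\frac{\mu}{2}(p-1)>\kappa+1$) that $\kappa p+\frac{\mu}{2}(p-1)\ge \kappa$ — indeed $\kappa p+\frac{\mu}{2}(p-1)-\kappa=\kappa(p-1)+\frac{\mu}{2}(p-1)>0$ trivially, and more precisely we need $\kappa p+\frac{\mu}{2}(p-1)\ge\kappa$, i.e. this exponent is large enough to leave a spare factor $\langle y\rangle^{-\kappa}$ — we obtain $\langle y\rangle^{-\kappa p-\frac{\mu}{2}(p-1)}\lesssim \langle y\rangle^{-\kappa}\langle y\rangle^{-(\kappa p+\frac{\mu}{2}(p-1)-\kappa)}$, and the remaining power of $\langle y\rangle$ is nonpositive, hence bounded by $1$.

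Second, it then suffices to show that the residual integral
\[
\int_{-y}^{-\frac y2}\frac{\langle x+y\rangle^{-q+\frac12}}{\sqrt{x+y}}\,dx
\]
is bounded uniformly in $y\ge 0$ (up to possibly absorbing a harmless remaining power of $\langle y\rangle$ that is $\le 1$). Substituting $s=x+y$ turns this into $\int_0^{y/2} s^{-1/2}\langle s\rangle^{-q+\frac12}\,ds$. Near $s=0$ the factor $s^{-1/2}$ is integrable, and for large $s$ the integrand behaves like $s^{-q}$; the hypothesis $q\ge\frac12$ is exactly what makes this convergent (the borderline case $q=\frac12$ gives $\int^{\cdot} s^{-1/2}\langle s\rangle^{0}\,ds\sim s^{1/2}$, which is $\lesssim\langle y\rangle^{1/2}$, so in that edge case I would carry the factor $\langle y\rangle^{1/2}$ and absorb it against the extra negative power of $\langle y\rangle$ coming from the first step, which is available since the lower bound on $\kappa$ is strict). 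For $q>\frac12$ the $s$-integral is outright $O(1)$.

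Combining the two steps yields $\int_{-y}^{-y/2}\langle x\rangle^{-\kappa p-\frac{\mu}{2}(p-1)}\langle x+y\rangle^{-q+\frac12}(x+y)^{-1/2}\,dx\lesssim\langle y\rangle^{-\kappa}$, as claimed. The only delicate point — the ``main obstacle'' — is the bookkeeping of exponents at the endpoint $q=\frac12$ (and, relatedly, checking that the strict lower bound on $\kappa$ really does leave enough room): one must make sure the $\langle y\rangle^{1/2}$ produced by the divergent-looking tail is compensated by the surplus in $\kappa p+\frac{\mu}{2}(p-1)-\kappa$. This is precisely where conditions \eqref{upper bound for kappa n even}–\eqref{lower bound for kappa n even} are used, and it mirrors the argument for Lemma \ref{Lemma 1st integral n even}; I would simply invoke that the range of admissible $\kappa$ is nonempty and the surplus power is strictly positive, so the estimate closes.
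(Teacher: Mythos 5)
Your overall strategy (freeze $\langle x\rangle\approx\langle y\rangle$ on $[-y,-y/2]$, substitute $s=x+y$, and absorb whatever growth the residual $s$-integral produces into the surplus exponent supplied by \eqref{lower bound for kappa n even}) is viable and in fact a little more streamlined than the paper's, which instead splits at $\tilde y=\min(-y+1,-y/2)$, treats the $\sqrt{x+y}$-singularity on $[-y,-y+1]$ separately, and uses $\langle x+y\rangle\le 2(x+y)$ away from it before invoking the integrated bound. However, there is a concrete error in your second step: the claim that for $q>\tfrac12$ the residual integral $\int_0^{y/2}s^{-1/2}\langle s\rangle^{-q+\frac12}\,ds$ is ``outright $O(1)$'' is false. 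For large $s$ the integrand is $\approx s^{-q}$, so the integral converges at infinity only for $q>1$; for $q\in(\tfrac12,1)$ it grows like $\langle y\rangle^{1-q}$ and for $q=1$ like $\log\langle y\rangle$. This range is not vacuous under the hypotheses (e.g.\ $n=8$, $\mu=2$ admits $p$ with $q$ slightly above $\tfrac12$), so as written the proof fails there: having discarded the surplus $\langle y\rangle^{-(\kappa p+\frac{\mu}{2}(p-1)-\kappa)}$ as ``bounded by $1$'', you have nothing left to pay for the $\langle y\rangle^{1-q}$.

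The repair is exactly the absorption you already perform at the endpoint $q=\tfrac12$, but it must be carried out for the whole range $\tfrac12\le q\le 1$: the surplus exponent is $\kappa p+\tfrac{\mu}{2}(p-1)-\kappa=(p-1)\big(\kappa+\tfrac{\mu}{2}\big)$, and \eqref{lower bound for kappa n even} says precisely that $(p-1)\big(\kappa+\tfrac{\mu}{2}\big)>1-q$, strictly. Hence $\langle y\rangle^{-(p-1)(\kappa+\mu/2)}\cdot\langle y\rangle^{1-q}\lesssim\langle y\rangle^{-\epsilon}$ for some $\epsilon>0$ (which also kills the logarithm at $q=1$), and for $q>1$ the surplus is simply positive and can be dropped. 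With that bookkeeping corrected for all $q\ge\tfrac12$ rather than only at $q=\tfrac12$, your argument closes and gives the stated bound.
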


\begin{proof}
Let $H(y)$ be the integral that appears in the left-hand side of \eqref{estimate 2nd integral n even, square root}. Let us split $H(y)$ in two integrals
\begin{align*}
H(y)&= \int_{-y}^{\tilde{y}} \langle x\rangle^{-\kappa p -\frac{\mu}{2}(p-1)} \frac{\langle x+y \rangle^{-q+\frac{1}{2}}}{\sqrt{x+y}} \,dx  +\int_{\tilde{y}}^{-\frac{y}{2}} \langle x\rangle^{-\kappa p -\frac{\mu}{2}(p-1)} \frac{\langle x+y \rangle^{-q+\frac{1}{2}}}{\sqrt{x+y}} \,dx \\& \doteq H_1(y)+H_2(y),
\end{align*} where $\tilde{y}\doteq \min(-y+1,-\frac{y}{2})$.

For $H_1(y)$, being $\langle x \rangle^{-\frac{\mu}{2}(p-1)}$ and $\langle x+y\rangle^{-q+\frac{1}{2}}$ bounded on $[-y,-y+1]$, we get
\begin{align*}
H_1(y)\lesssim  \int_{-y}^{-y+1}  \frac{\langle x\rangle^{-\kappa p}}{\sqrt{x+y}} \,dx .
\end{align*}
If $y \geq 2$, then, $\langle x\rangle \approx \langle y \rangle$ on $[-y,-y+1]$. Also, because of $p>1$ and $\kappa>0$, we obtain
\begin{align*}
H_1(y)\lesssim \langle y\rangle^{-\kappa p} \int_{-y}^{-y+1}  \frac{dx}{\sqrt{x+y}} \lesssim \langle y\rangle^{-\kappa p} \lesssim \langle y\rangle^{-\kappa }.
\end{align*}
Else, for $0\leq y\leq 2$, since $\langle y\rangle^\kappa$ is bounded, we have 
\begin{align*}
H_1(y)\lesssim  \int_{-y}^{-y+1}  \frac{dx}{\sqrt{x+y}} =2 \lesssim \langle y\rangle^{-\kappa }.
\end{align*} 

Let us estimate $H_2(y)$. Since $\langle x+y\rangle \leq 2(x+y)$ for $x\geq -y+1$, then,
\begin{align*}
H_2(y)\lesssim  \int_{\tilde{y}}^{-\frac{y}{2}} \langle x\rangle^{-\kappa p -\frac{\mu}{2}(p-1)}\langle x+y \rangle^{-q} \,dx \leq \int_{-y}^{-\frac{y}{2}} \langle x\rangle^{-\kappa p -\frac{\mu}{2}(p-1)}\langle x+y \rangle^{-q} \,dx \doteq  \widetilde{H}_2(y).
\end{align*} 
Being $\langle x\rangle \gtrsim \langle x+y\rangle $ and $\langle x \rangle \approx \langle y\rangle $ for $x\in [-y,-\frac{y}{2}]$, we estimate $\widetilde{H}_2(y)$ as follows:
\begin{align*}
\widetilde{H}_2(y) &\lesssim \langle y\rangle^{-\kappa} \int_{-y}^{-\frac{y}{2}} \langle x\rangle^{-\kappa (p-1) -\frac{\mu}{2}(p-1)}\langle x+y \rangle^{-q} \,dx  \\
& \lesssim \langle y\rangle^{-\kappa} \int_{-y}^{-\frac{y}{2}} \langle x+y\rangle^{-\kappa (p-1) -\frac{\mu}{2}(p-1)-q} \,dx  \lesssim  \langle y\rangle^{-\kappa} ,
\end{align*} here we used \eqref{lower bound for kappa n even} in order to guarantee the uniform boundedness of the integral in the last line. Combining the estimates for $H_1(y)$ and $H_2(y)$, we find \eqref{estimate 2nd integral n even, square root}. 
\end{proof}

\begin{lemma} \label{Lemma 2nd integral n even, square root q<1/2}
Let us consider $p,\kappa$ satisfying \eqref{p>p0(n+mu1) prel n even}, \eqref{upper bound for kappa n even} and \eqref{lower bound for kappa n even} and let $q$ be defined by \eqref{def q n even} such that $0\leq q< \tfrac{1}{2}$. Then, we have for any $y\geq 0$
\begin{align}
\int_{-y}^{-\frac{y}{2}} \langle x\rangle^{-\kappa p -\frac{\mu}{2}(p-1)+\frac{1}{2}} \frac{\langle x+y \rangle^{-q}}{\sqrt{x+y}} \,dx \lesssim \langle y\rangle ^{-\kappa}. \label{estimate 2nd integral n even, square root q<1/2}
\end{align}
\end{lemma}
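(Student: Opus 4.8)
The plan is to mimic almost verbatim the proof of Lemma \ref{Lemma 2nd integral n even, square root}, splitting the integral at $\tilde{y}\doteq\min(-y+1,-\frac{y}{2})$ into a piece $H_1(y)$ near the left endpoint $x=-y$ (where the factor $\langle x+y\rangle$ degenerates and the singular weight $(x+y)^{-1/2}$ lives) and a piece $H_2(y)$ on $[\tilde{y},-\frac{y}{2}]$. For $H_1(y)$ one uses that on $[-y,-y+1]$ the factors $\langle x\rangle^{-\frac{\mu}{2}(p-1)+\frac{1}{2}}$ and $\langle x+y\rangle^{-q}$ are bounded (here $0\le q<\frac12$ is crucially used so that $\langle x+y\rangle^{-q}$ stays bounded, not just integrable), hence $H_1(y)\lesssim\int_{-y}^{-y+1}\langle x\rangle^{-\kappa p}(x+y)^{-1/2}\,dx$; then for $y\ge 2$ one has $\langle x\rangle\approx\langle y\rangle$ on that interval and $\int_{-y}^{-y+1}(x+y)^{-1/2}\,dx=2$, giving $H_1(y)\lesssim\langle y\rangle^{-\kappa p}\lesssim\langle y\rangle^{-\kappa}$, while for $0\le y\le 2$ the bound $H_1(y)\lesssim 2\lesssim\langle y\rangle^{-\kappa}$ is immediate since $\langle y\rangle^\kappa$ is bounded.

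For $H_2(y)$, on $[\tilde{y},-\frac{y}{2}]$ we have $x\ge -y+1$, so $\langle x+y\rangle\le 2(x+y)$ and hence $(x+y)^{-1/2}\langle x+y\rangle^{-q}\lesssim\langle x+y\rangle^{-q-\frac12}$; enlarging the interval to $[-y,-\frac{y}{2}]$ gives
\begin{align*}
H_2(y)\lesssim\int_{-y}^{-\frac{y}{2}}\langle x\rangle^{-\kappa p-\frac{\mu}{2}(p-1)+\frac12}\langle x+y\rangle^{-q-\frac12}\,dx.
\end{align*}
On this interval $\langle x\rangle\approx\langle y\rangle$ and $\langle x\rangle\gtrsim\langle x+y\rangle$, so one pulls out one power $\langle y\rangle^{-\kappa}$ and converts the remaining $\langle x\rangle$-weight into a $\langle x+y\rangle$-weight, reducing matters to the uniform boundedness of
\begin{align*}
\int_{-y}^{-\frac{y}{2}}\langle x+y\rangle^{-\kappa(p-1)-\frac{\mu}{2}(p-1)+\frac12-q-\frac12}\,dx=\int_{-y}^{-\frac{y}{2}}\langle x+y\rangle^{-\kappa(p-1)-\frac{\mu}{2}(p-1)-q}\,dx.
\end{align*}
This integral (after the change of variables $z=x+y$, a range of length $y/2$ starting essentially at $z=0$) is bounded uniformly in $y$ precisely when the exponent is $<-1$, i.e. when $\kappa(p-1)+\frac{\mu}{2}(p-1)+q>1$, which is exactly the lower bound \eqref{lower bound for kappa n even} on $\kappa$ (in the equivalent form $p\kappa+q+\frac{\mu}{2}(p-1)>\kappa+1$). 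Combining the estimates for $H_1(y)$ and $H_2(y)$ yields \eqref{estimate 2nd integral n even, square root q<1/2}.

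The only place where the hypothesis $0\le q<\frac12$ (rather than $q\ge\frac12$) intervenes is in bounding $H_1(y)$: here the extra half-power $\langle x\rangle^{+\frac12}$ in the integrand is harmless because $\langle x\rangle\approx\langle y\rangle$ and $\kappa p>\kappa\ge\kappa-\frac12$ leaves room, but one must know $\langle x+y\rangle^{-q}$ is bounded near $x=-y$, which forces $q\ge 0$; and the different overall power of $\langle x\rangle$ compared to Lemma \ref{Lemma 2nd integral n even, square root} is exactly compensated so that the $H_2$ computation lands on the same condition \eqref{lower bound for kappa n even}. I expect no genuine obstacle: the lemma is a routine variant of the preceding one, and the main point is simply to check that the bookkeeping of exponents still closes under \eqref{upper bound for kappa n even} and \eqref{lower bound for kappa n even} in the regime $0\le q<\frac12$.
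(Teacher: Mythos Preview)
Your overall strategy matches the paper's: split at $\tilde y=\min(-y+1,-\tfrac{y}{2})$ and treat the two pieces as in Lemma~\ref{Lemma 2nd integral n even, square root}. However, there is a genuine gap in your handling of the extra factor $\langle x\rangle^{+1/2}$.

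You assert that on $[-y,-y+1]$ the factor $\langle x\rangle^{-\frac{\mu}{2}(p-1)+\frac12}$ is bounded. This is \emph{not} implied by the hypotheses of the lemma: it would require $\tfrac{\mu}{2}(p-1)\ge\tfrac12$, which can fail. For instance $n=8$, $\mu=2$, $p=1.4$ satisfies \eqref{p>p0(n+mu1) prel n even}, admits $\kappa\in(0.5,0.8]$ satisfying \eqref{upper bound for kappa n even}--\eqref{lower bound for kappa n even}, and gives $q=0.4\in[0,\tfrac12)$, yet $\tfrac{\mu}{2}(p-1)=0.4<\tfrac12$. Consequently your bound $H_1(y)\lesssim\langle y\rangle^{-\kappa p}$ is not justified, and the informal remark ``$\kappa p>\kappa\ge\kappa-\tfrac12$ leaves room'' does not repair it (that would need $\kappa(p-1)\ge\tfrac12$, which is also not guaranteed). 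The same issue reappears in your $H_2$ step: converting the remaining $\langle x\rangle$-weight into a $\langle x+y\rangle$-weight via $\langle x\rangle\gtrsim\langle x+y\rangle$ requires the exponent $-\kappa(p-1)-\tfrac{\mu}{2}(p-1)+\tfrac12$ to be nonpositive, which you do not verify.

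The fix---and indeed the one new observation in this lemma compared to Lemma~\ref{Lemma 2nd integral n even, square root}---is to check that
\[
-\kappa(p-1)-\tfrac{\mu}{2}(p-1)+\tfrac12\le 0,\qquad\text{i.e.}\qquad \kappa\ge \tfrac{1}{2(p-1)}-\tfrac{\mu}{2}.
\]
This follows from the assumed lower bound \eqref{lower bound for kappa n even} because $q<\tfrac12$ forces $-q+1>\tfrac12$ and hence $\tfrac{-q+1}{p-1}>\tfrac{1}{2(p-1)}$; the paper records this as condition \eqref{lower bound kappa n even, case q<1/2}. Once this sign is established, both your $H_1$ and $H_2$ arguments go through as written. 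The moral is that one cannot discard $\langle x\rangle^{-\frac{\mu}{2}(p-1)}$ as ``bounded'' but must keep it together with $\langle x\rangle^{-\kappa(p-1)}$ when absorbing the extra $+\tfrac12$.
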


\begin{proof}
We have to modify slightly the proof of Lemma \ref{Lemma 2nd integral n even, square root}. Let $I(y)$ be the integral that appears in the left-hand side of \eqref{estimate 2nd integral n even, square root q<1/2}. Even in this case we split $I(y)$ in two integrals
\begin{align*}
I(y)&= \int_{-y}^{\tilde{y}} \langle x\rangle^{-\kappa p -\frac{\mu}{2}(p-1)+\frac{1}{2}} \frac{\langle x+y \rangle^{-q}}{\sqrt{x+y}} \,dx  +\int_{\tilde{y}}^{-\frac{y}{2}} \langle x\rangle^{-\kappa p -\frac{\mu}{2}(p-1)+\frac{1}{2}} \frac{\langle x+y \rangle^{-q}}{\sqrt{x+y}} \,dx \\& \doteq I_1(y)+I_2(y),
\end{align*} where $\tilde{y}=\min(-y+1,-\frac{y}{2})$ as before.

We begin with $I_1(y)$. Since $\langle x+y\rangle^{-q}$ is bounded on $[-y,-y+1]$, it holds
\begin{align*}
I_1(y)\lesssim  \int_{-y}^{-y+1}  \frac{\langle x\rangle^{-\kappa p -\frac{\mu}{2}(p-1)+\frac{1}{2}}}{\sqrt{x+y}} \,dx .
\end{align*}
If $y \geq 2$, then, $\langle x\rangle \approx \langle y \rangle$ on $[-y,-y+1]$. Also,
\begin{align*}
I_1(y)\lesssim \langle y\rangle^{-\kappa } \int_{-y}^{-y+1}  \frac{\langle x\rangle^{-\kappa (p-1) -\frac{\mu}{2}(p-1)+\frac{1}{2}}}{\sqrt{x+y}} \, dx \lesssim \langle y\rangle^{-\kappa } \int_{-y}^{-y+1}  \frac{dx}{\sqrt{x+y}} \lesssim \langle y\rangle^{-\kappa }.
\end{align*} where in the second last inequality we used the fact that the exponent of $\langle x\rangle$ is nonpositive. Indeed, $-\kappa (p-1) -\tfrac{\mu}{2}(p-1)+\tfrac{1}{2}\leq 0$ is equivalent to require 
\begin{align} \label{lower bound kappa n even, case q<1/2}
\kappa\geq \tfrac{1}{2(p-1)}-\tfrac{\mu}{2}.
\end{align}
 But thanks to the assumption $q<\tfrac{1}{2}$ this lower bound on $\kappa$ is weaker than the lower bound in \eqref{lower bound for kappa n even}. Therefore, under the assumptions we are working with, \eqref{lower bound kappa n even, case q<1/2} is always satisfied.
On the other hand, for $0\leq y\leq 2$, since $\langle x\rangle$ is bounded on $[-y,-y+1]$ and $\langle y\rangle^\kappa$ is bounded as well, we have 
\begin{align*}
I_1(y)\lesssim  \int_{-y}^{-y+1}  \frac{dx}{\sqrt{x+y}} =2 \lesssim \langle y\rangle^{-\kappa }.
\end{align*} 

Using again \eqref{lower bound kappa n even, case q<1/2}, it is possible to show the estimate $I_2(y)\lesssim \langle y \rangle^{-\kappa}$ exactly as we have done in Lemma \ref{Lemma 2nd integral n even, square root} for the term $H_2(y)$.
%

Summarizing, the estimates for $I_1(y)$ and $I_2(y)$ imply  \eqref{estimate 2nd integral n even, square root q<1/2}. 
\end{proof}

\begin{lemma} \label{Lemma 2nd integral n even, square root q<0}
Let us consider $p,\kappa$ satisfying \eqref{p>p0(n+mu1) prel n even}, \eqref{upper bound for kappa n even} and \eqref{lower bound for kappa n even} and let $q$ be defined by \eqref{def q n even} such that $-\tfrac{1}{2}\leq  q<0$. Then, we have for any $y\geq 0$
\begin{align}
\int_{-y}^{-\frac{y}{2}} \langle x\rangle^{-\kappa p -\frac{\mu}{2}(p-1)+1} \frac{\langle x+y \rangle^{-q-\frac{1}{2}}}{\sqrt{x+y}} \,dx \lesssim \langle y\rangle ^{-\kappa}. \label{estimate 2nd integral n even, square root q<0}
\end{align}
\end{lemma}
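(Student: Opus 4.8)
The plan is to mirror the splitting scheme already used in the proofs of Lemmas~\ref{Lemma 2nd integral n even, square root} and~\ref{Lemma 2nd integral n even, square root q<1/2}, only readjusting the exponents to the present range $-\tfrac12\le q<0$. I would denote by $I(y)$ the integral on the left-hand side of \eqref{estimate 2nd integral n even, square root q<0} and split it at $\tilde y\doteq\min(-y+1,-\tfrac{y}{2})$ as $I(y)=I_1(y)+I_2(y)$, where $I_1(y)$ is the portion carrying the integrable endpoint singularity of $(x+y)^{-1/2}$ at $x=-y$ and $I_2(y)$ is nonempty only for $y\ge 2$ (in which case $\tilde y=-y+1$). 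Before starting, I would record the single auxiliary restriction on $\kappa$ the argument needs, namely $-\kappa(p-1)-\tfrac{\mu}{2}(p-1)+1\le 0$, i.e. $\kappa\ge\tfrac{1}{p-1}-\tfrac{\mu}{2}$; since $q<0$, the lower bound \eqref{lower bound for kappa n even} (equivalently $\kappa(p-1)+q+\tfrac{\mu}{2}(p-1)>1$) is strictly stronger, so this auxiliary condition holds automatically, playing here the role that \eqref{lower bound kappa n even, case q<1/2} plays in Lemma~\ref{Lemma 2nd integral n even, square root q<1/2}.

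For $I_1(y)$ I would observe that on $[-y,-y+1]$ the factor $\langle x+y\rangle^{-q-1/2}$ is bounded, hence $I_1(y)\lesssim\int_{-y}^{-y+1}\langle x\rangle^{-\kappa p-\frac{\mu}{2}(p-1)+1}(x+y)^{-1/2}\,dx$. If $y\ge 2$ then $\langle x\rangle\approx\langle y\rangle$ on that interval, so writing $\langle x\rangle^{-\kappa p-\frac{\mu}{2}(p-1)+1}=\langle x\rangle^{-\kappa}\,\langle x\rangle^{-\kappa(p-1)-\frac{\mu}{2}(p-1)+1}$ and discarding the last factor (nonpositive exponent, $\langle x\rangle\ge 1$) gives $I_1(y)\lesssim\langle y\rangle^{-\kappa}\int_{-y}^{-y+1}(x+y)^{-1/2}\,dx=2\langle y\rangle^{-\kappa}$. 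If $0\le y\le 2$ then $\langle x\rangle$ and $\langle y\rangle^{\kappa}$ are bounded on $[-y,-y+1]$ and $I_1(y)\lesssim\int_{-y}^{-y+1}(x+y)^{-1/2}\,dx=2\lesssim\langle y\rangle^{-\kappa}$.

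For $I_2(y)$ (so $y\ge 2$) I would use that $x+y\ge 1$ on $[-y+1,-\tfrac{y}{2}]$, whence $\langle x+y\rangle\le 2(x+y)$, $(x+y)^{-1/2}\lesssim\langle x+y\rangle^{-1/2}$, and therefore $\tfrac{\langle x+y\rangle^{-q-1/2}}{\sqrt{x+y}}\lesssim\langle x+y\rangle^{-q-1}$. This gives $I_2(y)\lesssim\int_{-y}^{-\frac{y}{2}}\langle x\rangle^{-\kappa p-\frac{\mu}{2}(p-1)+1}\langle x+y\rangle^{-q-1}\,dx\doteq\widetilde I_2(y)$, the extension of the lower limit to $-y$ being harmless since the integrand is bounded on $[-y,-y+1]$. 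Then on $[-y,-\tfrac{y}{2}]$ one has $\langle x\rangle\approx\langle y\rangle$ and $\langle x\rangle\gtrsim\langle x+y\rangle$; pulling out $\langle x\rangle^{-\kappa}\approx\langle y\rangle^{-\kappa}$ and transferring the remaining factor $\langle x\rangle^{-\kappa(p-1)-\frac{\mu}{2}(p-1)+1}$ (nonpositive exponent) onto $\langle x+y\rangle$, I obtain $\widetilde I_2(y)\lesssim\langle y\rangle^{-\kappa}\int_{-y}^{-\frac{y}{2}}\langle x+y\rangle^{-\kappa(p-1)-\frac{\mu}{2}(p-1)-q}\,dx$; the substitution $z=x+y$ shows this last integral is bounded uniformly in $y$ precisely because \eqref{lower bound for kappa n even} forces $\kappa(p-1)+\frac{\mu}{2}(p-1)+q>1$. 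Adding the bounds for $I_1$ and $I_2$ yields \eqref{estimate 2nd integral n even, square root q<0}.

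I do not anticipate a genuine obstacle: this is a routine variant of the two preceding lemmas. The only point that will require a moment's care is the exponent bookkeeping — verifying that the auxiliary lower bound $\kappa\ge\tfrac{1}{p-1}-\tfrac{\mu}{2}$ needed to kill the $\langle x\rangle$-power is implied by \eqref{lower bound for kappa n even} (this is where the hypothesis $q<0$, rather than merely $q<\tfrac12$, enters), and that the final $\langle x+y\rangle$-integral is uniformly integrable, which is again \eqref{lower bound for kappa n even}.
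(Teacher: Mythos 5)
Your argument is correct, and every step checks out: the auxiliary bound $\kappa\ge\tfrac{1}{p-1}-\tfrac{\mu}{2}$ does follow from \eqref{lower bound for kappa n even} precisely because $q<0$, the exponent $-q-\tfrac12$ of $\langle x+y\rangle$ is nonpositive on the whole range $-\tfrac12\le q<0$ so the endpoint factor is indeed bounded, and the final $\langle x+y\rangle$-integral converges uniformly since \eqref{lower bound for kappa n even} forces its exponent below $-1$. However, your route is not the one the paper takes for this lemma. You transplant the endpoint splitting at $\tilde y=\min(-y+1,-\tfrac{y}{2})$ from Lemmas \ref{Lemma 2nd integral n even, square root} and \ref{Lemma 2nd integral n even, square root q<1/2}, taming the $(x+y)^{-1/2}$ singularity by isolating a unit neighbourhood of $x=-y$ and replacing $(x+y)^{-1/2}$ by $\langle x+y\rangle^{-1/2}$ away from it. The paper instead keeps the integral whole: it pulls out $\langle y\rangle^{-\kappa}$ immediately, uses \eqref{lower bound for kappa n even} to write $\langle x\rangle^{-\kappa(p-1)-\frac{\mu}{2}(p-1)+1}\le\langle x\rangle^{q-\varepsilon}$, transfers $\langle x+y\rangle^{-q}\lesssim\langle x\rangle^{-q}$ (this is where $q<0$ enters there), and then disposes of the remaining integral $\int\langle x\rangle^{-\varepsilon}\langle x+y\rangle^{-1/2}(x+y)^{-1/2}\,dx$ by an integration by parts combined with \eqref{KuKu95 lemma} --- the same device used later for the $\xi$-integrals in the proof of Proposition \ref{Lemma I,J,P,Q gamma=0,1/2}. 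The two mechanisms are interchangeable here; yours has the merit of uniformity with the two preceding lemmas and avoids the integration by parts, while the paper's avoids the case distinction $y\gtrless 2$ and exploits more directly the sign hypothesis on $q$. Either proof is acceptable.
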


\begin{proof}
First of all, we point out that $\langle x\rangle \approx \langle y \rangle$ on the domain of integration. Therefore, if we denote by $J(y)$ the integral in the left-hand side of \eqref{estimate 2nd integral n even, square root q<0}, then, 
\begin{align*}
J(y)\lesssim \langle y\rangle^{-\kappa} \int_{-y}^{-\frac{y}{2}} \langle x\rangle^{-\kappa (p-1) -\frac{\mu}{2}(p-1)+1} \frac{\langle x+y \rangle^{-q-\frac{1}{2}}}{\sqrt{x+y}} \,dx.
\end{align*}
Using \eqref{lower bound for kappa n even}, it results
\begin{align*}
\langle x\rangle^{-\kappa (p-1) -\frac{\mu}{2}(p-1)+1} \leq \langle x\rangle^{q-\varepsilon},
\end{align*} for a suitably small $\varepsilon>0$. Also,
\begin{align*}
J(y)\lesssim \langle y\rangle^{-\kappa} \int_{-y}^{-\frac{y}{2}}  \langle x\rangle^{q-\varepsilon} \frac{\langle x+y \rangle^{-q-\frac{1}{2}}}{\sqrt{x+y}} \,dx.
\end{align*}
Since $\langle x+y\rangle \lesssim \langle x\rangle$ on $[-y,-\frac{y}{2}]$ and $q<0$, we find $\langle x+y\rangle^{-q} \lesssim \langle x\rangle^{-q}$, which implies 
\begin{align*}
J(y)\lesssim \langle y\rangle^{-\kappa} \int_{-y}^{-\frac{y}{2}}  \langle x\rangle^{-\varepsilon} \frac{\langle x+y \rangle^{-\frac{1}{2}}}{\sqrt{x+y}} \,dx.
\end{align*}

The last step is to prove the uniform boundedness of the $x-$integral in the right-hand side of the previous inequality. Integration by parts leads to 
\begin{align*}
\int_{-y}^{-\frac{y}{2}}  \langle x\rangle^{-\varepsilon} \frac{\langle x+y \rangle^{-\frac{1}{2}}}{\sqrt{x+y}} \,dx & \leq \int_{-y}^{\infty}  \langle x\rangle^{-\varepsilon} \frac{\langle x+y \rangle^{-\frac{1}{2}}}{\sqrt{x+y}} \,dx \\
& \lesssim \int_{-y}^{\infty}   \sqrt{x+y} \big(\langle x\rangle^{-1-\varepsilon}\langle x+y \rangle^{-\frac{1}{2}}+\langle x\rangle^{-\varepsilon}\langle x+y \rangle^{-\frac{3}{2}}\big) \,dx
\\
& \lesssim \int_{-y}^{\infty}  \big(\langle x\rangle^{-1-\varepsilon}+\langle x\rangle^{-\varepsilon}\langle x+y \rangle^{-1}\big) \,dx \, \lesssim 1,
\end{align*} where in the last inequality we used \eqref{KuKu95 lemma}. This concludes the proof.
\end{proof}

\begin{rmk} \label{rmk q>-1/2} Comparing the statements of Lemmas \ref{Lemma 2nd integral n even, square root}, \ref{Lemma 2nd integral n even, square root q<1/2} and \ref{Lemma 2nd integral n even, square root q<0}, we see that we have estimated suitable integrals for $q\geq -\tfrac{1}{2}$. The condition $q\geq -\tfrac{1}{2}$ is equivalent to require $p\geq \tfrac{n}{n-1}$. Of course, we want to keep the lower bound in \eqref{p>p0(n+mu1) prel n even} as main lower bound for $p$. Therefore, we have to guarantee that $$\tfrac{n}{n-1}\leq p_0(n+\mu).$$ It turns out that such a condition is equivalent to require $$\mu\leq \widetilde{M}(n)\doteq \tfrac{3n^2-5n+2}{n}.$$ 
 
 In the upcoming results we will require a stronger upper bound for $\mu$, so that, the above condition on $\mu$ will be every time fulfilled and, in turn, the condition $q\geq -\tfrac{1}{2}$ will be valid as well.
\end{rmk}

\begin{lemma} \label{Lemma 3rd integral n even, 3 factors}
Let us consider $p,\kappa$ satisfying \eqref{p>p0(n+mu1) prel n even}, \eqref{upper bound for kappa n even} and \eqref{lower bound for kappa n even} and let $q$ be defined by \eqref{def q n even}. Then, we have for any $y\geq 0$
\begin{align}
\int_{-2y}^{y} \langle x-y\rangle^{-\frac{\mu}{2}(p-1)} \langle x+2y \rangle^{-q-1} \langle x\rangle^{-\kappa p}\,dx \lesssim \langle y\rangle ^{-\kappa}. \label{estimate 3rd integral n even, 3 factors}
\end{align}
\end{lemma}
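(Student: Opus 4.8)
The plan is to split the interval of integration at the two midpoints between the ``centres'' $-2y$, $0$, $y$ of the three weights $\langle x+2y\rangle^{-q-1}$, $\langle x\rangle^{-\kappa p}$, $\langle x-y\rangle^{-\frac{\mu}{2}(p-1)}$, i.e.\ to write $[-2y,y]=R_1\cup R_2\cup R_3$ with $R_1=[-2y,-y]$, $R_2=[-y,\tfrac{y}{2}]$ and $R_3=[\tfrac{y}{2},y]$. First I would dispose of bounded $y$: for $0\le y\le 2$ all three exponents are nonpositive (indeed $-\kappa p<0$, $-\tfrac{\mu}{2}(p-1)\le 0$, and $-q-1<0$ because $p>1$ forces $q>-1$), so the integrand in \eqref{estimate 3rd integral n even, 3 factors} is bounded and it is integrated over a bounded interval, whence the left-hand side is $\lesssim 1\approx\langle y\rangle^{-\kappa}$. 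Thus one may assume $y\ge 2$ throughout.

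On each $R_i$ two of the three weights have argument comparable to $\langle y\rangle$ (more precisely $\gtrsim\langle y\rangle$, which, the corresponding exponent being $\le 0$, is all one needs): on $R_1$ one has $\langle x\rangle\approx\langle x-y\rangle\approx\langle y\rangle$; on $R_2$ one has $\langle x-y\rangle\gtrsim\langle y\rangle$ and $\langle x+2y\rangle\approx\langle y\rangle$; on $R_3$ one has $\langle x\rangle\approx\langle x+2y\rangle\approx\langle y\rangle$. Replacing those two weights by the corresponding explicit powers of $\langle y\rangle$ leaves one with a one-dimensional integral of the third weight over an interval of length $\approx y$, namely $\int_0^{y}\langle s\rangle^{-q-1}\,ds$ on $R_1$, $\int_{-y}^{y/2}\langle x\rangle^{-\kappa p}\,dx$ on $R_2$, and $\int_0^{y/2}\langle s\rangle^{-\frac{\mu}{2}(p-1)}\,ds$ on $R_3$. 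Each such integral of a weight $\langle\cdot\rangle^{-\sigma}$ over a length-$\approx y$ interval is $O(1)$ if $\sigma>1$, $O(\log\langle y\rangle)$ if $\sigma=1$, and $O(\langle y\rangle^{1-\sigma})$ if $0<\sigma<1$ (for the $R_1$-integral: $O(1)$ if $q>0$, $O(\log\langle y\rangle)$ if $q=0$, $O(\langle y\rangle^{-q})$ if $-1<q<0$).

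It then remains to collect, in each of the resulting sub-cases, the total power of $\langle y\rangle$ and to check it is $\le-\kappa$. In the ``$O(1)$'' sub-cases (and their logarithmic borderlines) the accumulated exponent is $-\tfrac{\mu}{2}(p-1)-q-1$ on $R_2$ and $R_3$, which is $\le-\kappa$ by the upper bound \eqref{upper bound for kappa n even} (since $\kappa\le q+\tfrac{\mu}{2}(p-1)$ gives $\tfrac{\mu}{2}(p-1)+q+1\ge\kappa+1$), and $-\tfrac{\mu}{2}(p-1)-\kappa p$ on $R_1$, which is $\le-\kappa$ because $\kappa(p-1)\ge 0\ge-\tfrac{\mu}{2}(p-1)$; a logarithmic factor is harmless since these inequalities are strict. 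In every ``$O(\langle y\rangle^{1-\sigma})$'' sub-case the $\pm1$'s cancel and the accumulated exponent turns out to be exactly $-\kappa-\big(\kappa(p-1)+q+\tfrac{\mu}{2}(p-1)\big)$, which is $<-\kappa$ precisely by the lower bound \eqref{lower bound for kappa n even} rewritten as $\kappa(p-1)+q+\tfrac{\mu}{2}(p-1)>1$. Putting the three contributions together yields \eqref{estimate 3rd integral n even, 3 factors}.

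The computation is otherwise routine; the one spot that needs attention is $R_3$, the region adjacent to $x=y$. There the weight $\langle x-y\rangle^{-\frac{\mu}{2}(p-1)}$ is integrable on $(0,\infty)$ only when $\tfrac{\mu}{2}(p-1)>1$, i.e.\ $p>p_{\Fuj}(\mu)$, which is \emph{not} among the hypotheses of the present lemma (only $p<p_{\Fuj}(\tfrac{n+\mu-1}{2})$ is imposed, through \eqref{p>p0(n+mu1) prel n even}); when $p\le p_{\Fuj}(\mu)$ one is forced to accept the growth factor $\langle y\rangle^{1-\frac{\mu}{2}(p-1)}$, and the fact that it is still compensated relies crucially on the lower bound \eqref{lower bound for kappa n even} on $\kappa$. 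This is the only step where the interplay of the two bounds on $\kappa$ with $\mu$ is genuinely used, and it is the reason such a lower bound was imposed; note that in the range of $p$ relevant for Theorem \ref{thm GER n even semi} one always has $p<p_{\Fuj}(\mu)$, so this growing sub-case is in fact the one that actually occurs in the application.
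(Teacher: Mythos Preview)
Your proof is correct and rests on the same idea as the paper --- split the domain according to which of the three weights has argument comparable to $\langle y\rangle$ --- but the execution differs. The paper uses only two regions, $[-2y,-\tfrac{y}{2}]$ and $[-\tfrac{y}{2},y]$: on the first it extracts a single factor $\langle y\rangle^{-\kappa}$ from $\langle x\rangle^{-\kappa p}$, uses $\langle x+2y\rangle\le\langle x-y\rangle$ to merge the two remaining weights, and then appeals to the auxiliary estimate \eqref{KuKu95 lemma} for the product $\langle x\rangle^{-\kappa(p-1)}\langle x+2y\rangle^{-q-1-\frac{\mu}{2}(p-1)}$, which avoids any sub-case analysis; on the second region it uses $\langle x+2y\rangle\ge\langle x-y\rangle,\langle x\rangle$ to collapse all three factors into a single power of $\langle x+2y\rangle$ and integrates directly. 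Your three-region scheme with explicit sub-cases is more elementary (no auxiliary lemma needed) but longer; the paper's two-region argument is more streamlined.

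One slip to correct: in the ``$O(1)$'' sub-case on $R_3$ the accumulated exponent is $-\kappa p-q-1$, not $-\tfrac{\mu}{2}(p-1)-q-1$ (the two weights you freeze there are $\langle x\rangle^{-\kappa p}$ and $\langle x+2y\rangle^{-q-1}$, not the one involving $\mu$). This does not damage the argument --- one still has $-\kappa p-q-1<-\kappa$ since $\kappa(p-1)>0$ and $q>-1$ for $p>1$ --- and, as you yourself note, this sub-case is anyway vacuous in the range $p<p_{\Fuj}(\mu)$ relevant for Theorem~\ref{thm GER n even semi}.
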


\begin{proof}
Let $K(y)$ denotes the integral on the left-hand side of \eqref{estimate 3rd integral n even, 3 factors}. We split the integral in two parts
\begin{align*}
K(y)&=\int_{-2y}^{-\frac{y}{2}} \langle x-y\rangle^{-\frac{\mu}{2}(p-1)} \langle x+2y \rangle^{-q-1} \langle x\rangle^{-\kappa p}\,dx \\ & \quad +\int_{-\frac{y}{2}}^{y} \langle x-y\rangle^{-\frac{\mu}{2}(p-1)} \langle x+2y \rangle^{-q-1} \langle x\rangle^{-\kappa p}\,dx \doteq K_1(y)+K_2(y).
\end{align*}

On the one hand, we can use the relations $\langle x\rangle\approx \langle y\rangle$ and $\langle x+2y\rangle \leq \langle x-y\rangle$ when $x\in[-2y,-\frac{y}{2}]$, obtaining for $K_1(y)$
\begin{align*}
K_1(y)\lesssim \langle y\rangle^{-\kappa}\int_{-2y}^{-\frac{y}{2}}\langle x+2y \rangle^{-q-1-\frac{\mu}{2}(p-1)} \langle x\rangle^{-\kappa (p-1)}\,dx \lesssim \langle y\rangle^{-\kappa},
\end{align*} where in the last inequality we can apply \eqref{KuKu95 lemma} because of \eqref{lower bound for kappa n even}.
On the other hand, since $\langle x+2y\rangle \geq \langle x-y\rangle,\langle x\rangle$ for $x\in [-\frac{y}{2},y]$, employing again \eqref{lower bound for kappa n even}, we find 
\begin{align*}
K_2(y)\lesssim  \int_{-\frac{y}{2}}^{y}\langle x+2y \rangle^{-\frac{\mu}{2}(p-1)-q-1-\kappa p} \,dx \lesssim  \langle y \rangle^{-\frac{\mu}{2}(p-1)-q-\kappa p } \lesssim \langle y\rangle^{-\kappa}.
\end{align*}
The desired estimate follows from the estimates for $K_1(y)$ and $K_2(y)$. 
\end{proof}

Let us introduce four auxiliary integrals, which will come into play in the treatment of the semilinear problem. Let $t\geq 0,r>0$ and let $\gamma$ be $0$ or $\frac{1}{2}$, we define
\begin{align}
I_\gamma(t,r) &\doteq \int_0^t \langle \tau \rangle^{-\frac{\mu}{2}(p-1)} \int_{|\lambda_-|}^{\lambda_+}\langle \lambda\rangle^{-q+\frac{p}{2}-\frac{1}{2}-\gamma} \frac{\phi_\kappa(\tau,\lambda)^p}{\sqrt{\lambda-\lambda_-}} \, d\lambda \, d\tau , \label{def Igamma n even}\\
J_\gamma(t,r) &\doteq \int_0^{(t-r)_+} \langle \tau \rangle^{-\frac{\mu}{2}(p-1)}\langle \lambda_-\rangle^{-q-\frac{1}{2}-\gamma}  \int^{\lambda_-}_0\frac{ \langle\lambda\rangle^{\frac{p}{2}} \phi_\kappa(\tau,\lambda)^p}{\sqrt{\lambda_--\lambda}} \, d\lambda \, d\tau , \label{def Jgamma n even}\\
P_\gamma(t,r) &\doteq \int_0^{(t-r)_+} \langle \tau \rangle^{-\frac{\mu}{2}(p-1)}\langle \lambda_-\rangle^{-q+\frac{p}{2}-1-\gamma}   \phi_\kappa(\tau,\tfrac{\lambda_-}{2})^p \, d\tau , \label{def Pgamma n even}\\
Q_\gamma(t,r) &\doteq \int_{(t-r)_+}^t \langle \tau \rangle^{-\frac{\mu}{2}(p-1)}\langle \lambda_-\rangle^{-q+\frac{p}{2}-1-\gamma}   \phi_\kappa(\tau,-\lambda_-)^p \, d\tau , \label{def Qgamma n even}
\end{align} where we have set $\lambda_\pm\doteq t-\tau\pm r$ , $q$ is defined by \eqref{def q n even} and $\phi_\kappa$ is  given by \eqref{def phi kappa}. 

The next proposition provides some estimates for the above defined integrals. Let us underline explicitly that the core of the proof of  Theorem \ref{thm GER n even semi} is the next result.

\begin{prop} \label{Lemma I,J,P,Q gamma=0,1/2} 
Let us consider an even integer $n\geq 4$ and $p,\kappa$ satisfying \eqref{p>p0(n+mu1) prel n even}, \eqref{upper bound for kappa n even} and \eqref{lower bound for kappa n even} and let $q$ be defined by \eqref{def q n even} such that 
\begin{align} \label{q condition n even}
 -\tfrac{1}{2}\leq \, &q\leq m-\tfrac{1}{2},\\
&p< p_{\Fuj}(\mu). \label{p<p Fuj (mu1)}
\end{align} 

 Then, the following estimates are fulfilled for any $t\geq 0,r>0$ and $\gamma\in \{0,\frac{1}{2}\}$:
\begin{align}
I_\gamma(t,r) & \lesssim \langle t-r\rangle^{-\kappa-\gamma}, \label{Igamma est n even} \\
J_\gamma(t,r) & \lesssim \langle t-r\rangle^{-\kappa-\gamma}, \label{Jgamma est n even} \\
P_\gamma(t,r) & \lesssim \langle t-r\rangle^{-\kappa-\gamma}, \label{Pgamma est n even} \\
Q_\gamma(t,r) & \lesssim \langle t-r\rangle^{-\kappa-\gamma}. \label{Qgamma est n even}
\end{align}
\end{prop}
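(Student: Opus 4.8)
The plan is to estimate $I_\gamma,J_\gamma,P_\gamma,Q_\gamma$ by reducing each of them to one of the one--dimensional integrals already handled in Lemmas~\ref{Lemma 1st integral n even}, \ref{Lemma 2nd integral n even, square root}, \ref{Lemma 2nd integral n even, square root q<1/2}, \ref{Lemma 2nd integral n even, square root q<0} and \ref{Lemma 3rd integral n even, 3 factors}, together with the convolution estimate \eqref{KuKu95 lemma}. For $I_\gamma$ and $J_\gamma$ this reduction proceeds in two stages: first the inner $\lambda$--integration, which involves the square--root singularity $(\lambda-\lambda_-)^{-1/2}$, respectively $(\lambda_--\lambda)^{-1/2}$, and then the substitution $x=t-\tau-r$ (so that $\lambda_-=x$) in the remaining integral in $\tau$; for $P_\gamma$ and $Q_\gamma$, which carry no inner integral, only the second stage is needed.

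For the inner integral of $I_\gamma$ I would expand $\phi_\kappa(\tau,\lambda)^{p}=\langle\tau+\lambda\rangle^{-p/2}\langle\tau-\lambda\rangle^{-p\kappa}$ and, on the light--cone interval $\lambda\in[|\lambda_-|,\lambda_+]$, bound $\langle\tau+\lambda\rangle$ from below by $\langle\lambda\rangle$ (and, where convenient, by a multiple of $\langle t+r\rangle$), absorbing the factor $\langle\tau+\lambda\rangle^{-p/2}$ into the power of $\langle\lambda\rangle$. The integration against $(\lambda-\lambda_-)^{-1/2}$ is then carried out by splitting $[|\lambda_-|,\lambda_+]$ into $[|\lambda_-|,|\lambda_-|+1]$, where the singularity lies and the remaining factors have size governed by a power of $\langle\lambda_-\rangle$, and $[|\lambda_-|+1,\lambda_+]$, where $\sqrt{\lambda-\lambda_-}\gtrsim1$. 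Since the exponent $-q+\tfrac p2-\tfrac12-\gamma$ of $\langle\lambda\rangle$ changes sign with $q$, exactly the three--way distinction $q\ge\tfrac12$, $0\le q<\tfrac12$, $-\tfrac12\le q<0$ of Lemmas~\ref{Lemma 2nd integral n even, square root}--\ref{Lemma 2nd integral n even, square root q<0} appears; this is legitimate because \eqref{q condition n even} together with Remark~\ref{rmk q>-1/2} confines $q$ to $[-\tfrac12,m-\tfrac12]$. The inner integral of $J_\gamma$ is handled the same way, with $(\lambda_--\lambda)^{-1/2}$ and integration range $[0,\lambda_-]$.

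After this step each of the four quantities is a single integral of the form $\int\langle\tau\rangle^{-\frac{\mu}{2}(p-1)}\langle\lambda_-\rangle^{a}\langle2\tau-t\pm r\rangle^{b}\,d\tau$, with exponents $a,b$ depending on $\gamma$ and --- for $I_\gamma,J_\gamma$ --- on the case for $q$. Performing the substitution $x=t-\tau-r$ and splitting the $\tau$--range at $\tau=t-r$ (separating the interior region $\lambda_->0$ from $\lambda_-\le0$), one recognises in each piece, after relabelling with $y\approx t-r$, one of the model integrals of Lemmas~\ref{Lemma 1st integral n even}, \ref{Lemma 2nd integral n even, square root}, \ref{Lemma 2nd integral n even, square root q<1/2}, \ref{Lemma 2nd integral n even, square root q<0} and \ref{Lemma 3rd integral n even, 3 factors}; these furnish the factor $\langle t-r\rangle^{-\kappa}$, while the additional decay $\langle t-r\rangle^{-\gamma}$ needed when $\gamma=\tfrac12$ comes precisely from the shift $-\tfrac12-\gamma$, respectively $-1-\gamma$, built into the exponents of $\langle\lambda_-\rangle$ in the definitions of $I_\gamma,\dots,Q_\gamma$. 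For $Q_\gamma$ one uses in addition that $\tau+\lambda_-=t-r$ is constant along the path of integration, so that $\phi_\kappa(\tau,-\lambda_-)^{p}=\langle2\tau-t+r\rangle^{-p/2}\langle t-r\rangle^{-p\kappa}$ and the desired bound reduces to a boundedness check for the remaining $\tau$--integral plus the exponent inequalities coming from \eqref{upper bound for kappa n even}--\eqref{lower bound for kappa n even}; $P_\gamma$ is treated analogously with $\phi_\kappa(\tau,\tfrac{\lambda_-}{2})$.

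As for the hypotheses: \eqref{upper bound for kappa n even} and \eqref{lower bound for kappa n even} make the $\kappa$--window nonempty and force the exponent sums appearing in applications of \eqref{KuKu95 lemma} to exceed $1$; \eqref{p>p0(n+mu1) prel n even} is precisely the compatibility of those two bounds and is the genuine interplay condition; and \eqref{q condition n even}--\eqref{p<p Fuj (mu1)}, that is, $\tfrac{\mu}{2}(p-1)<1$ together with $-\tfrac12\le q\le m-\tfrac12$, are what let the $\tau$--integration be closed over the part of the light cone where $\tau$ is comparable with $t$. I expect that last regime in $I_\gamma$ to be the main obstacle: there $\langle\lambda_-\rangle=\langle t-\tau-r\rangle$ grows like $\tau$, the factor $\langle\tau\rangle^{-\frac{\mu}{2}(p-1)}$ is not by itself integrable (since $p<p_{\Fuj}(\mu)$) and the $\langle\lambda_-\rangle$--factor is not by itself enough either, so the estimate closes only through the precise balance of exponents matched term by term to Lemmas~\ref{Lemma 2nd integral n even, square root}--\ref{Lemma 2nd integral n even, square root q<0}; carrying this bookkeeping through simultaneously for $\gamma\in\{0,\tfrac12\}$ and across the three ranges of $q$ is the bulk of the work.
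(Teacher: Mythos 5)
Your plan diverges from the paper's proof in the order of integration: the paper does not integrate in $\lambda$ at fixed $\tau$, but passes to the characteristic variables $\xi=\lambda+\tau$, $\eta=\lambda-\tau$, in which the singular factor becomes $(\lambda-\lambda_-)^{-1/2}=(\xi-(t-r))^{-1/2}$, i.e.\ it depends on $\xi$ alone; the $\eta$--integral (split over $[\tfrac{\xi}{2},\xi]$, $[-\tfrac{\xi}{2},\tfrac{\xi}{2}]$, $[-\xi,-\tfrac{\xi}{2}]$, resp.\ over $\Omega_1,\Omega_2,\Omega_3$ for $J_\gamma$) can then be done first with no singularity present, and the remaining $\xi$--integral is closed by integration by parts. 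In your ordering this decoupling is lost, and that is where the proposal has genuine gaps. First, in the inner $\lambda$--integral of $I_\gamma$ the singularity sits at $\lambda=\lambda_-=t-\tau-r$ while the weight $\langle\lambda-\tau\rangle^{-\kappa p}$ is centred at $\lambda=\tau$; these coincide when $\tau=\tfrac{t-r}{2}$, so the result of the inner integration is not of the clean form $\langle\lambda_-\rangle^{a}\langle 2\tau-t\pm r\rangle^{b}$ that your second stage presupposes --- near that coincidence the two factors interact and produce exponents depending on whether $\kappa p\gtrless\tfrac12$, a case distinction you never open. Second, the blanket bound $\langle\tau+\lambda\rangle^{-p/2}\le\langle\lambda\rangle^{-p/2}$ destroys the estimate in the regime $r\le1$, $\tau$ close to $t$, where $\lambda\approx r$ is small but $\tau+\lambda\approx t$ is large: there one needs the full factor $\langle\tau\rangle^{-\frac{\mu}{2}(p-1)-\frac{p}{2}-\kappa p}$ to beat $\langle t-r\rangle^{-\kappa-\frac12}$, and for $q\ge\tfrac12$ the lower bound \eqref{lower bound for kappa n even} alone (which only gives $\kappa(p-1)+\tfrac{\mu}{2}(p-1)>1-q$) does not suffice once $\langle\tau+\lambda\rangle^{-p/2}$ has been thrown away. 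Your parenthetical alternative $\langle\tau+\lambda\rangle\gtrsim\langle t+r\rangle$ is simply false on other parts of the domain (take $\tau$ and $\lambda$ both of order $|t-r|\ll t+r$), so the necessary case analysis is missing rather than deferred.

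There is also a misattribution in the bookkeeping that suggests the reduction has not actually been carried out: in the paper the trichotomy $q\ge\tfrac12$, $0\le q<\tfrac12$, $-\tfrac12\le q<0$ and Lemmas \ref{Lemma 2nd integral n even, square root}--\ref{Lemma 2nd integral n even, square root q<0} are needed only for the piece $J_{\gamma,3}$ (the region $\Omega_3$ of $J_\gamma$), not for $I_\gamma$, whose final $\xi$--integral is handled by a direct integration by parts with no $q$--split; likewise $P_\gamma$ requires the substitution $x=t-r-3\tau$ (not $x=t-\tau-r$) to land on the three--factor integral of Lemma \ref{Lemma 3rd integral n even, 3 factors}, because $\phi_\kappa(\tau,\tfrac{\lambda_-}{2})$ contains $\langle\tau-\tfrac{\lambda_-}{2}\rangle=\langle\tfrac{3\tau-(t-r)}{2}\rangle$. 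The overall strategy --- reduce everything to the one--dimensional model integrals --- is the right one, but as written the proposal asserts the reduction rather than performing it, and the two specific steps above would fail as stated.
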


\begin{rmk}\label{rmk after prop I,J,P,Q gamma=0,1/2} Let us analyze all assumptions we have done in the previous statement. The condition from above on $q$ in \eqref{q condition n even} is equivalent to $p\leq 2$ and, therefore, it is always fulfilled in our setting. Indeed, we are assuming $p<p_{\Fuj}\big(\frac{n+\mu-1}{2}\big)$ and this implies $p<2$ for $n\geq 4$ and $\mu\geq 2$. 
On the other hand, the condition on $p$ prescribed by \eqref{p<p Fuj (mu1)} requires the validity of $p_0(n+\mu)<p_{\Fuj}(\mu),$ in order to have a nonempty range for $p$.  This condition is valid for $\mu< M(n)$, where $M(n)$ is defined by \eqref{definition M1(n)}. Indeed,  $p_0(n+\mu)<p_{\Fuj}(\mu)$ is equivalent to require $$(n+\mu-1)p_{\Fuj}(\mu)^2-(n+\mu+1)p_{\Fuj}(\mu)-2 =-\tfrac{1}{\mu^2}\big(\mu^2 -(n-1)\mu-2(n-1)\big)>0,$$ which is obviously satisfied for $0\leq \mu <M(n)$. Furthermore, as we said in Remark \ref{rmk q>-1/2}, for $n\geq 4$ it holds $M(n)<\widetilde{M}(n)$, so that, $q\geq -\tfrac{1}{2}$ is valid for $\mu\in[2,M(n))$ and, then, the lower bound for $q$ in \eqref{q condition n even} is satisfied.
\end{rmk}

\begin{rmk}\label{rmk after thm GER n even} Thanks to Remark \ref{rmk after prop I,J,P,Q gamma=0,1/2} we can clarify now the choice of the parameters $\kappa_1$, $\kappa_2$ and $\mu$ in Theorem \ref{thm GER n even semi}. Indeed, as in  \eqref{upper bound for kappa n even} and in \eqref{lower bound for kappa n even}, in the statement of Theorem \ref{thm GER n even semi}, we have
\begin{align*}
\kappa_1 &\doteq \tfrac{-q+1}{p-1}-\tfrac{\mu}{2}=\tfrac{2}{p-1}-\tfrac{n+\mu-1}{2};\\
\kappa_2 &\doteq  q+\tfrac{\mu}{2}(p-1)=\tfrac{n+\mu-1}{2}(p-1)-1.
\end{align*}

As we have already seen, $(\kappa_1,\kappa_2]$ is not empty because of the condition $p>p_0(n+\mu)$ and $\kappa_1>0$ thanks to the upper bound $p<p_{\Fuj}\big(\tfrac{n+\mu-1}{2}\big)$. 
Besides, we can find $\kappa\in(\kappa_1,\kappa_2]$ such that $k<m+\frac{1}{2}$, since $\kappa_2<m+\frac{1}{2}$ is equivalent to require $p< \tfrac{2n+\mu}{n+\mu-1}$. However, this upper bound for $p$ is weaker than the upper bound for $p$ in \eqref{p>p0(n+mu1) prel n even}. Hence, for the range of $p$s considered in the statement of Theorem \ref{thm GER n even semi} the inequality $\kappa_2<m+\frac{1}{2}$ is always satisfied. Finally, as we have just explained in Remark \ref{rmk after prop I,J,P,Q gamma=0,1/2}, the upper bound for $\mu$ is due to the fact that we want to guarantee the validity of the condition $p_0(n+\mu)<p_{\Fuj}(\mu)$, which implies a not empty range of admissible values for $p$ in Theorem \ref{thm GER n even semi}, while the lower bound $\mu\geq 2$ is a necessary condition coming from \eqref{delta=1 condition} for nontrivial and nonnegative $\mu$ and $\nu$. 
\end{rmk}

\begin{proof}[Proof of Proposition \ref{Lemma I,J,P,Q gamma=0,1/2}]
 We will modify the proof of Proposition 6.6 in \cite{KuKu96}, by using the previously derived lemmas. Let us start with $I_\gamma(t,r)$. 
Since
\begin{align*}
I_\gamma(t,r) &= \int_0^t \langle \tau \rangle^{-\frac{\mu}{2}(p-1)} \int_{|\lambda_-|}^{\lambda_+}\langle \lambda\rangle^{-q+\frac{p}{2}-\frac{1}{2}-\gamma} \langle \lambda-\tau \rangle^{-\kappa p}\frac{\langle \lambda+\tau \rangle^{-\frac{p}{2}}}{\sqrt{\lambda-\lambda_-}} \, d\lambda \, d\tau 
\end{align*} performing the change of variables $\xi=\lambda+\tau,\eta=\lambda-\tau$, we get
\begin{align*}
I_\gamma(t,r) &\lesssim  \int_{|t-r|}^{t+r} \frac{\langle \xi \rangle^{-\frac{p}{2}}}{\sqrt{\xi+r-t}}  \int_{-\xi}^{\xi} \langle \xi-\eta \rangle^{-\frac{\mu}{2}(p-1)}\langle \xi+\eta \rangle^{-q+\frac{p}{2}-\frac{1}{2}-\gamma} \langle \eta \rangle^{-\kappa p} \, d\eta \, d\xi.
\end{align*}
Let us estimate the $\eta-$integral. We split the domain of integration in three subintervals.
\begin{align*}
\int_{-\xi}^{\xi} &\langle \xi-\eta \rangle^{-\frac{\mu}{2}(p-1)}\langle \xi+\eta \rangle^{-q+\frac{p}{2}-\frac{1}{2}-\gamma} \langle \eta \rangle^{-\kappa p} \, d\eta = A_1(\xi)+A_2(\xi)+A_3(\xi),
\end{align*} where $A_1(\xi)$, $A_2(\xi)$ and $A_3(\xi)$ denote the integrals of the integrand in the left-hand side over $\big[\tfrac{\xi}{2},\xi\big]$, $\big[-\tfrac{\xi}{2},\tfrac{\xi}{2}\big]$ and $\big[-\xi,-\tfrac{\xi}{2}\big]$, respectively.

Let us begin with $A_1(\xi)$. Since $\langle \xi +\eta \rangle \approx \langle \eta\rangle \approx \langle \xi \rangle$ for $\eta\in[\tfrac{\xi}{2},\xi]$, using \eqref{p<p Fuj (mu1)}, we have
\begin{align*}
A_1(\xi)& \lesssim \langle \xi\rangle^{-q+\frac{p}{2}-\frac{1}{2}-\gamma -\kappa p} \int_{\frac{\xi}{2}}^{\xi} \langle \xi-\eta \rangle^{-\frac{\mu}{2}(p-1)}\, d\eta  \lesssim \langle \xi\rangle^{-q+\frac{p}{2}-\frac{1}{2}-\gamma -\kappa p -\frac{\mu}{2}(p-1)+1} .
\end{align*}

We estimate now $A_2(\xi)$. Being $\langle \xi-\eta\rangle \approx \langle \xi+\eta\rangle \approx \langle \xi\rangle$ for $\eta\in [-\tfrac{\xi}{2},\tfrac{\xi}{2}]$, we have
\begin{align*}
A_2(\xi)& \lesssim  \langle \xi\rangle^{-\frac{\mu}{2}(p-1)-q+\frac{p}{2}-\frac{1}{2}-\gamma} \int^{\frac{\xi}{2}}_{-\frac{\xi}{2}} \langle \eta \rangle^{-\kappa p} \, d\eta \\ & \lesssim 
\begin{cases}
\langle \xi\rangle^{-\frac{\mu}{2}(p-1)-q+\frac{p}{2}-\frac{1}{2}-\gamma-\kappa p+1} & \mbox{if} \quad -\kappa p+1>0 , \\
\langle \xi\rangle^{-\frac{\mu}{2}(p-1)-q+\frac{p}{2}-\frac{1}{2}-\gamma-\kappa p+1+\varepsilon} & \mbox{if} \quad -\kappa p+1=0, \\
\langle \xi\rangle^{-\frac{\mu}{2}(p-1)-q+\frac{p}{2}-\frac{1}{2}-\gamma} & \mbox{if} \quad -\kappa p+1<0,
\end{cases}
\end{align*} where $\varepsilon>0$ in the logarithmic case can be chosen sufficiently small so that  
\begin{align}\label{lower bound kappa with epsilon}
-\tfrac{\mu}{2}(p-1)-q-\kappa(p-1)+1+\varepsilon<0,
\end{align}
 thanks to \eqref{lower bound for kappa n even}.

Eventually, we consider $A_3(\xi)$. On $[-\xi,-\tfrac{\xi}{2}]$ we have $\langle \xi-\eta\rangle\approx \langle \eta\rangle\approx\langle \xi \rangle$, also,
\begin{align*}
A_3(\xi) & \lesssim  \langle \xi\rangle^{-\frac{\mu}{2}(p-1)-\kappa p}  \int^{-\frac{\xi}{2}}_{-\xi}  \langle \xi+\eta \rangle^{-q+\frac{p}{2}-\frac{1}{2}-\gamma} \, d\eta \lesssim  \langle \xi\rangle^{-\frac{\mu}{2}(p-1)-\kappa p-q+\frac{p}{2}+\frac{1}{2}-\gamma} ,
\end{align*} where in the last inequality we used $-q+\frac{p}{2}-\frac{1}{2}-\gamma>-1$, which is equivalent to $p<p_{\Fuj}(\tfrac{n-2}{2})$. This condition on $p$ is always fulfilled thanks to the upper bound in \eqref{p>p0(n+mu1) prel n even}.
Combining the estimates for $A_1(\xi),A_2(\xi)$ and $A_3(\xi)$, it results
\begin{align*}
I_\gamma(t,r) &\lesssim\! \int_{|t-r|}^{t+r} \! \frac{\langle \xi \rangle^{-\frac{\mu}{2}(p-1)-q-\frac{1}{2}-\gamma+\alpha(\kappa)}}{\sqrt{\xi+r-t}} d\xi \lesssim \langle t-r\rangle^{-\kappa -\gamma}\! \! \int_{|t-r|}^{t+r} \!\frac{\langle \xi \rangle^{-\frac{\mu}{2}(p-1)-q-\frac{1}{2}+\kappa +\alpha(\kappa)}}{\sqrt{\xi+r-t}} d\xi ,
\end{align*} where $\alpha(\kappa)=-\kappa p+1$ if $\kappa<\tfrac{1}{p}$,  $\alpha(\kappa)=-\kappa p+1+\varepsilon$ if $\kappa=\tfrac{1}{p}$ and  $\alpha(\kappa)=0$ if $\kappa>\tfrac{1}{p}$. We point out that the power for $\langle\xi\rangle$ in the last integral can be written in all three subcases as $-\beta(\kappa)-\tfrac{1}{2}$ for a positive constant $\beta(\kappa)$, due to \eqref{lower bound for kappa n even}, \eqref{lower bound kappa with epsilon} and \eqref{upper bound for kappa n even}. Therefore,
\begin{align*}
I_\gamma(t,r) &\lesssim \langle t-r\rangle^{-\kappa -\gamma}  \int_{|t-r|}^{t+r} \frac{\langle \xi \rangle^{-\beta(\kappa)-\tfrac{1}{2}}}{\sqrt{\xi+r-t}} \,d\xi  \lesssim \langle t-r\rangle^{-\kappa -\gamma} .
\end{align*} 
Indeed, using integration by parts, for $t\geq r$ we may estimate the $\xi-$integral as follows:
\begin{align*}
\int_{|t-r|}^{t+r} \frac{\langle \xi \rangle^{-\beta(\kappa)-\tfrac{1}{2}}}{\sqrt{\xi+r-t}} \,d\xi & \leq\int_{t-r}^{\infty} \frac{\langle \xi \rangle^{-\beta(\kappa)-\tfrac{1}{2}}}{\sqrt{\xi+r-t}} \,d\xi  \lesssim \int_{t-r}^{\infty} \sqrt{\xi+r-t} \langle \xi \rangle^{-\beta(\kappa)-\tfrac{3}{2}} \,d\xi \\ &\lesssim \int_{t-r}^{\infty} \langle \xi \rangle^{-\beta(\kappa)-1} \,d\xi \, \lesssim 1.
\end{align*} 
On the other hand, employing again integration by parts, for $t\leq r$ we get
\begin{align*}
\int_{r-t}^{t+r} \frac{\langle \xi \rangle^{-\beta(\kappa)-\tfrac{1}{2}}}{\sqrt{\xi+r-t}} \,d\xi & \leq \int_{r-t}^{\infty} \frac{\langle \xi \rangle^{-\beta(\kappa)-\tfrac{1}{2}}}{\sqrt{\xi+r-t}} \,d\xi \\ & \lesssim \sqrt{r-t} \langle r-t\rangle^{-\beta(\kappa)-\frac{1}{2}}+\int_{r-t}^{\infty} \sqrt{\xi+r-t} \langle \xi \rangle^{-\beta(\kappa)-\tfrac{3}{2}} \,d\xi \\ & \lesssim \langle r-t\rangle^{-\beta(\kappa)}+\int_{r-t}^{\infty}  \langle \xi \rangle^{-\beta(\kappa)-1} \,d\xi \,\lesssim 1.
\end{align*} 



Let us estimate $J_\gamma(t,r)$. We can assume $t> r$. Carrying out the same change of variable we used for $I_\gamma(t,r)$, we get
\begin{align*}
J_\gamma(t,r) &= \int_0^{t-r} \langle \tau \rangle^{-\frac{\mu}{2}(p-1)}\langle \lambda_-\rangle^{-q-\frac{1}{2}-\gamma}  \int^{\lambda_-}_0\frac{ \langle\lambda\rangle^{\frac{p}{2}} \langle \lambda+\tau\rangle^{-\frac{p}{2}}\langle \lambda-\tau\rangle^{-\kappa p}}{\sqrt{\lambda_--\lambda}} \, d\lambda \, d\tau \\
&\lesssim  \int_0^{t-r} \!\frac{\langle\xi\rangle^{-\frac{p}{2}}}{\sqrt{t-r-\xi}}\int_{-\xi}^{\xi}\langle \xi -\eta\rangle^{-\frac{\mu}{2}(p-1)} \big\langle t-r+\tfrac{\eta-\xi}{2}\big\rangle^{-q-\frac{1}{2}-\gamma}\langle\xi+\eta\rangle^{\frac{p}{2}}\langle \eta\rangle^{-\kappa p}\,d\eta\, d\xi. 
\end{align*}
Let us split the domain of integration in the following three regions:
\begin{align*}
\Omega_1 &= \big\{(\xi,\eta)\in \mathbb{R}^2:  0<\xi<t-r, -\tfrac{\xi}{2}<\eta<\xi\big\}, \\
\Omega_2 &= \big\{(\xi,\eta)\in \mathbb{R}^2:  0<\xi< \tfrac{t-r}{2}, -\xi<\eta<-\tfrac{\xi}{2} \big\}, \\
\Omega_3 &=\big\{(\xi,\eta)\in \mathbb{R}^2: \tfrac{t-r}{2}<\xi<t-r, -\xi<\eta<  -\tfrac{\xi}{2}\big\}.
\end{align*}  
Thus, we can write $J_\gamma(t,r)=J_{\gamma,1}(t,r)+J_{\gamma,2}(t,r)+J_{\gamma,3}(t,r),$
where $J_{\gamma,k}(t,r)$ is the integral over $\Omega_k$ for $k=1,2,3$. We begin with $J_{\gamma,1}(t,r)$. Since on $\Omega_1$ we have
\begin{equation}
\big\langle t-r+\tfrac{\eta-\xi}{2}\big\rangle \approx \langle t-r \rangle, \quad
\big\langle t-r+\tfrac{\eta-\xi}{2}\big\rangle  \gtrsim \langle \xi+\eta \rangle ,\label{conditions in Omega1}
\end{equation} then, being $\kappa-q-\frac{\mu}{2}(p-1)\leq 0$ because of \eqref{upper bound for kappa n even},  we have 
\begin{align*}
\big\langle t-r+\tfrac{\eta-\xi}{2}\big\rangle^{-q-\frac{1}{2}-\gamma} \lesssim  \langle t-r \rangle^{-\kappa-\gamma-\frac{1}{2}+\frac{\mu}{2}(p-1)} \langle \xi+\eta \rangle^{\kappa-q-\frac{\mu}{2}(p-1)}.
\end{align*}
Besides, $\langle \xi+\eta \rangle \lesssim \langle \xi \rangle$ implies  that $\langle \xi+\eta \rangle^{\frac{p}{2}} \langle \xi \rangle^{-\frac{p}{2}}  $ is bounded on the domain of integration. Also, using $\langle \xi+\eta\rangle\approx \langle\xi \rangle $ for $\eta\in [-\tfrac{\xi}{2},\xi]$, it follows:
\begin{align*}
\langle t-r \rangle^{\kappa+\gamma+\frac{1}{2}-\frac{\mu}{2}(p-1)}J_{\gamma,1}& (t, r)  \lesssim  \iint_{\Omega_1}  \frac{\langle \xi -\eta\rangle^{-\frac{\mu}{2}(p-1)} \langle \xi+\eta \rangle^{\kappa-q-\frac{\mu}{2}(p-1)}}{\sqrt{t-r-\xi}}\langle \eta\rangle^{-\kappa p}\,d\eta\, d\xi \\
& \lesssim  \int_{0}^{t-r} \frac{\langle \xi\rangle^{-\frac{\mu}{2}(p-1)}}{\sqrt{t-r-\xi}} \int_{-\frac{\xi}{2}}^{\xi} \langle \xi -\eta\rangle^{-\frac{\mu}{2}(p-1)}   \langle \xi+\eta \rangle^{\kappa-q}\langle \eta\rangle^{-\kappa p}\,d\eta\, d\xi.
\end{align*}
Now we show that the $\eta-$integral in the last line of the previous estimate is uniformly bounded. Since $\langle \eta+\xi\rangle\approx \langle \xi-\eta\rangle$ for $\eta\in[-\tfrac{\xi}{2},\tfrac{\xi}{2}]$ and $\langle \eta+\xi\rangle\approx \langle \eta\rangle$ for $\eta\in[\tfrac{\xi}{2},\xi]$, then, using \eqref{KuKu95 lemma}, we obtain
\begin{align*}
\int_{-\frac{\xi}{2}}^{\xi} \langle \xi -\eta\rangle^{-\frac{\mu}{2}(p-1)}   \langle \xi+\eta \rangle^{\kappa-q}\langle \eta\rangle^{-\kappa p}\,d\eta  
&\lesssim \int_{-\frac{\xi}{2}}^{\frac{\xi}{2}} \langle \xi -\eta\rangle^{-\frac{\mu}{2}(p-1)+\kappa-q}  \langle \eta\rangle^{-\kappa p}\,d\eta \\ & \qquad + \int_{\frac{\xi}{2}}^{\xi} \langle \xi -\eta\rangle^{-\frac{\mu}{2}(p-1)}  \langle \eta\rangle^{-\kappa (p-1)-q}\,d\eta \lesssim 1.
\end{align*}
In particular, in the last estimates we used that the exponent for $\langle \xi -\eta \rangle$ in the first integral is nonnegative thanks to \eqref{upper bound for kappa n even} and that the exponent of $\langle\eta\rangle$ in the second integral is smaller than $\frac{\mu}{2}(p-1)-1$, due to \eqref{lower bound for kappa n even}, and, then, smaller than 0 thanks to the assumption \eqref{p<p Fuj (mu1)}. 
Thus, it follows:
\begin{align*}
J_{\gamma,1}(t,r) & \lesssim \langle t-r \rangle^{-\kappa-\gamma-\frac{1}{2}+\frac{\mu}{2}(p-1)} \int_{0}^{t-r} \frac{\langle \xi\rangle^{-\frac{\mu}{2}(p-1)}}{\sqrt{t-r-\xi}} \, d\xi.
\end{align*} 
For $t-r\geq 1$, we may estimate the $\xi-$integral in the following way:
\begin{align*}
\int_{0}^{t-r} &\frac{\langle \xi\rangle^{-\frac{\mu}{2}(p-1)}}{\sqrt{t-r-\xi}} \, d\xi =\int_{0}^{\frac{t-r}{2}} \frac{\langle \xi\rangle^{-\frac{\mu}{2}(p-1)}}{\sqrt{t-r-\xi}} \, d\xi +\int_{\frac{t-r}{2}}^{t-r} \frac{\langle \xi\rangle^{-\frac{\mu}{2}(p-1)}}{\sqrt{t-r-\xi}} \, d\xi \\
&\qquad\lesssim  (\sqrt{t-r})^{-1}\int_{0}^{\frac{t-r}{2}} \langle \xi\rangle^{-\frac{\mu}{2}(p-1)} \, d\xi +\langle t-r \rangle^{-\frac{\mu}{2}(p-1)}\int_{\frac{t-r}{2}}^{t-r} \frac{d\xi}{\sqrt{t-r-\xi}} \\
&\qquad\lesssim  \langle t-r\rangle^{-\frac{1}{2}}\langle t-r\rangle^{-\frac{\mu}{2}(p-1)+1} +\langle t-r \rangle^{-\frac{\mu}{2}(p-1)}\sqrt{t-r}  \lesssim \langle t-r\rangle^{-\frac{\mu}{2}(p-1)+\frac{1}{2}} .
\end{align*}
Otherwise, if $0<t-r<1$, then, using the fact that $\langle t-r\rangle \approx 1$, we get immediately
\begin{align*}
\int_{0}^{t-r} \frac{\langle \xi\rangle^{-\frac{\mu}{2}(p-1)}}{\sqrt{t-r-\xi}} \, d\xi  \leq \int_{0}^{t-r} \frac{ d\xi }{\sqrt{t-r-\xi}}  \approx \sqrt{t-r} \leq \langle t-r \rangle^{\frac{1}{2}} \approx \langle t-r \rangle^{\frac{1}{2}-\frac{\mu}{2}(p-1)}.
\end{align*}
Summarizing, we got  $J_{\gamma,1}(t,r)  \lesssim \langle t-r \rangle^{-\kappa-\gamma}$.

Similarly, we can now estimate $J_{\gamma,2}(t,r) $. Indeed, since \eqref{conditions in Omega1} is valid also in $\Omega_2$, proceeding as before, we find
\begin{align*}
J_{\gamma,2}(t,r) & \lesssim  \langle t-r \rangle^{-\kappa-\gamma-\frac{1}{2}+\frac{\mu}{2}(p-1)} \iint_{\Omega_2}  \frac{\langle \xi -\eta\rangle^{-\frac{\mu}{2}(p-1)} \langle \xi+\eta \rangle^{\kappa-q-\frac{\mu}{2}(p-1)}}{\sqrt{t-r-\xi}}\langle \eta\rangle^{-\kappa p}\,d\eta\, d\xi \\
& \lesssim \langle t-r \rangle^{-\kappa-\gamma-\frac{1}{2}+\frac{\mu}{2}(p-1)} \int_{0}^{\frac{t-r}{2}} \frac{\langle \xi\rangle^{-\frac{\mu}{2}(p-1)}}{\sqrt{t-r-\xi}} \int^{-\frac{\xi}{2}}_{-\xi}  \langle \xi+\eta \rangle^{\kappa-q-\frac{\mu}{2}(p-1)}\langle \eta\rangle^{-\kappa p}\,d\eta\, d\xi \\
& \lesssim \langle t-r \rangle^{-\kappa-\gamma-\frac{1}{2}+\frac{\mu}{2}(p-1)} \int_{0}^{\frac{t-r}{2}} \frac{\langle \xi\rangle^{-\frac{\mu}{2}(p-1)}}{\sqrt{t-r-\xi}}\, d\xi\lesssim \langle t-r \rangle^{-\kappa-\gamma},
\end{align*} where we used the relation $\langle \xi-\eta\rangle \approx\langle \xi\rangle$ in the second inequality,  \eqref{KuKu95 lemma} in the third one and the same estimate for the $\xi-$integral seen before on $\Omega_1$ in the last one.

It remains to check $J_{\gamma,3}(t,r) $ in order to show \eqref{Jgamma est n even}. Being $\big\langle t-r+\tfrac{\eta-\xi}{2}\big\rangle \geq \langle t-r-\xi\rangle $ and  $\big\langle t-r+\tfrac{\eta-\xi}{2}\big\rangle \geq \big\langle \tfrac{\xi+\eta}{2}\big\rangle \gtrsim \langle \xi+\eta \rangle $ valid on $\Omega_3$, then, 
\begin{align*}
\big\langle t-r+\tfrac{\eta-\xi}{2}\big\rangle^{-q-\frac{1}{2}-\gamma} &\lesssim \langle t-r-\xi\rangle^{-q+\frac{1}{2}}  \langle \xi+\eta \rangle ^{-1-\gamma} \qquad \mbox{if} \quad q\geq \tfrac{1}{2}, \\
\big\langle t-r+\tfrac{\eta-\xi}{2}\big\rangle^{-q-\frac{1}{2}-\gamma} &\lesssim \langle t-r-\xi\rangle^{-q} \langle \xi+\eta \rangle ^{-\frac{1}{2}-\gamma} \quad\qquad \mbox{if} \quad q\in [0,\tfrac{1}{2}),\\
\big\langle t-r+\tfrac{\eta-\xi}{2}\big\rangle^{-q-\frac{1}{2}-\gamma} &\lesssim \langle t-r-\xi\rangle^{-q-\frac{1}{2}} \langle \xi+\eta \rangle ^{-\gamma} \quad\qquad \mbox{if} \quad q\in [-\tfrac{1}{2},0].
\end{align*}
Moreover, $\langle \xi -\eta\rangle \approx \langle \xi \rangle \approx \langle \eta \rangle$ and $\langle\xi \rangle \lesssim \langle t-r\rangle^{-\gamma} \langle \xi\rangle^{\gamma-\frac{p}{2}}$ on $\Omega_3$, so, we have
\begin{align*}
J_{\gamma,3}(t,r) & \lesssim \langle t-r\rangle^{-\gamma} \int_{\frac{t-r}{2}}^{t-r} \langle \xi\rangle^{-\frac{\mu}{2}(p-1)-\kappa p+\gamma-\frac{p}{2}} \frac{\langle t-r-\xi\rangle^{-q+\theta(q)}}{\sqrt{t-r-\xi}} \\ & \quad\qquad \qquad \qquad\qquad \times\int_{-\xi}^{-\frac{\xi}{2}} \langle \xi+\eta \rangle^{\frac{p}{2}-\gamma -\frac{1}{2}-\theta(q)} \, d\eta \, d\xi \\
&\lesssim \langle t-r\rangle^{-\gamma} \int_{\frac{t-r}{2}}^{t-r} \langle \xi\rangle^{-\frac{\mu}{2}(p-1)-\kappa p+\frac{1}{2}-\theta(q)} \frac{\langle t-r-\xi\rangle^{-q+\theta(q)}}{\sqrt{t-r-\xi}}  \, d\xi \\
&=\langle t-r\rangle^{-\gamma} \int^{-\frac{t-r}{2}}_{-(t-r)} \langle \xi\rangle^{-\frac{\mu}{2}(p-1)-\kappa p+\frac{1}{2}-\theta(q)} \frac{\langle t-r+\xi\rangle^{-q+\theta(q)}}{\sqrt{t-r+\xi}}  \, d\xi,
\end{align*} where $\theta(q)=\tfrac{1}{2}$ if $q\geq \tfrac{1}{2}$, $\theta(q)=0$ if $0\leq q<\tfrac{1}{2}$ and $\theta(q)=-\frac{1}{2}$ if $-\tfrac{1}{2}\leq q<0$ and in the second inequality we used $\tfrac{p}{2}-\gamma -\tfrac{1}{2}-\theta(q)>-1$.
Thanks to Lemmas \ref{Lemma 2nd integral n even, square root}, \ref{Lemma 2nd integral n even, square root q<1/2} and \ref{Lemma 2nd integral n even, square root q<0}, we have
$J_{\gamma,3}(t,r)  \lesssim \langle t-r\rangle^{-\kappa-\gamma}$.
Hence, we proved \eqref{Jgamma est n even}.

 Now, we deal with $P_\gamma(t,r)$. Also in this case we work with $t>r$. Then, since $ \big\langle\tau +\frac{\lambda_-}{2} \big\rangle \gtrsim \langle \lambda_-\rangle$ and  $ \big\langle\tau +\frac{\lambda_-}{2} \big\rangle \gtrsim \langle t-r \rangle$ on the domain of integration, we can estimate $ \big\langle\tau +\frac{\lambda_-}{2} \big\rangle^{-\frac{p}{2}} \lesssim \langle \lambda_-\rangle^{-\frac{p}{2}+\gamma} \langle t-r \rangle^{-\gamma}$. Then,
\begin{align*}
P_\gamma(t,r) &= \int_0^{t-r} \langle \tau \rangle^{-\frac{\mu}{2}(p-1)}\langle \lambda_-\rangle^{-q+\frac{p}{2}-1-\gamma}    \big\langle \tau +\tfrac{\lambda_-}{2} \big\rangle^{-\frac{p}{2}}   \big\langle \tau -\tfrac{\lambda_-}{2}\big\rangle^{-\kappa p} \, d\tau \\
 &\lesssim   \langle t-r \rangle^{-\gamma}\int_0^{t-r} \langle \tau \rangle^{-\frac{\mu}{2}(p-1)}\langle t-r-\tau\rangle^{-q-1}    \langle t-r-3\tau\rangle^{-\kappa p} \, d\tau.
\end{align*}
Performing the change of variables $x=t-r-3\tau$ and using Lemma \ref{Lemma 3rd integral n even, 3 factors}, we find
\begin{align*}
\langle t-r \rangle^{\gamma} P_\gamma(t,r) &\lesssim \int_{-2(t-r)}^{t-r} \langle t-r-x\rangle^{-\frac{\mu}{2}(p-1)}\langle 2(t-r)+x \rangle^{-q-1}    \langle x\rangle^{-\kappa p} \, dx \lesssim   \langle t-r \rangle^{-\kappa}.
\end{align*}

Finally, we consider $Q_\gamma(t,r)$.
Since $\langle \tau-\lambda_-\rangle \gtrsim \langle t-r\rangle ,\langle \lambda_-\rangle $ on the domain of integration, then, it holds $\langle \tau-\lambda_-\rangle^{-\frac{p}{2}} \lesssim \langle t-r\rangle^{-\gamma} \langle \lambda_-\rangle^{-\frac{p}{2}+\gamma}$, which implies

\begin{align*}
Q_\gamma(t,r) &= \langle t-r\rangle^{-\kappa p}  \int_{(t-r)_+}^t \langle \tau \rangle^{-\frac{\mu}{2}(p-1)}\langle \lambda_-\rangle^{-q+\frac{p}{2}-1-\gamma}   \langle \tau -\lambda_-\rangle^{-\frac{p}{2}} \, d\tau \\
&\lesssim  \langle t-r\rangle^{-\kappa p-\gamma}  \int_{(t-r)_+}^t \langle \tau \rangle^{-\frac{\mu}{2}(p-1)}\langle \lambda_-\rangle^{-q-1}   \, d\tau  \lesssim \langle t-r\rangle^{-\kappa p-\gamma} \lesssim \langle t-r\rangle^{-\kappa -\gamma}.
\end{align*}
where in the second inequality we may use \eqref{KuKu95 lemma} to estimate the integral by a constant, due to \eqref{upper bound for kappa n even} and $q>-1$.
Thus, we proved also \eqref{Qgamma est n even}. This concludes the proof.
\end{proof}

\section{Proof of Theorem \ref{thm GER n even semi} } \label{Section proof of the main thm}

In this Section we prove Theorem \ref{thm GER n even semi}, using the estimates from Section \ref{Section preliminaries}.

\begin{prop} \label{Prop ||Lv|| n even} 
Let us consider $p,\kappa,q$ satisfying \eqref{p>p0(n+mu1) prel n even}, \eqref{upper bound for kappa n even}, \eqref{lower bound for kappa n even}, \eqref{q condition n even} and \eqref{p<p Fuj (mu1)}. Let $v\in X_\kappa $ and $\nu\in \mathbb{R}$. Then, the following estimates are satisfied for any $t\geq 0,r>0$
\begin{align}
|Lv(t,r)| &\lesssim N_0^\nu (|v|^p)\,  r^{-m} \phi_\kappa(t,r)  & \mbox{if} \quad \nu>-2 , \label{1st cond Lv n even} \\
|Lv(t,r)| &\lesssim \widetilde{N}_1^\nu (|v|^p) \, r^{1-m} \langle t-r\rangle^{-\kappa-\frac{1}{2}}  & \mbox{if} \quad \nu>-1 , \label{2nd cond Lv n even} \\
|\partial_r Lv(t,r)| &\lesssim \widetilde{N}_1^\nu (|v|^p)\,  r^{-m} \phi_\kappa(t,r)  & \mbox{if} \quad \nu>-1 , \label{3rd cond Lv der n even}
\end{align}
where $\phi_\kappa(t,r)$ is defined by \eqref{def phi kappa} and $\widetilde{N}_1^\nu (|v|^p)=N_0^\nu (|v|^p)+N_1^\nu (|v|^p)$, being $N_0^\nu,N_1^\nu$ defined by \eqref{def Nj nu v^p}.
In particular, it holds
\begin{align}\label{||Lv|| n even estimate}
\|Lv\|_{X_\kappa} \lesssim \widetilde{N}_1^\nu (|v|^p) \qquad \mbox{if} \quad \nu>-1.
\end{align}
\end{prop}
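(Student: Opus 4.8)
The plan is to substitute the three representation formulas for $\Theta$ from Section~\ref{Section lin eq} into the Duhamel integral $Lv$ and reduce everything, term by term, to the four model integrals $I_\gamma,J_\gamma,P_\gamma,Q_\gamma$ of Proposition~\ref{Lemma I,J,P,Q gamma=0,1/2}. Writing $\Theta=\tfrac12 r^{-2m}(w_3+w_4)$ one first differentiates $Lv$ under the $\tau$-integral — legitimate because for fixed $t$ the map $r\mapsto\Theta(g)(t-\tau,r)$ is $\mathcal C^1$ on $(0,\infty)$ (Proposition~\ref{thm lin probl n>=4 distr} and the $w_i$-representations) and the integrand together with its $r$-derivative is locally bounded in $\tau$ — which, using $\partial_r(2r^{2m}\Theta)=w_5+w_6$, gives $\partial_r Lv=c_n^{-1}\int_0^t\langle\tau\rangle^{-\frac{\mu}{2}(p-1)}\big(\tfrac12 r^{-2m}(W_5+W_6)-m\,r^{-2m-1}(W_3+W_4)\big)(t-\tau,r;\tau)\,d\tau$. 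Then one estimates $|Lv|$ via the representation $r^{2m}\Theta(|v(\tau,\cdot)|^p)(t-\tau,r)=(W_1+W_2)(t-\tau,r;\tau)$ for \eqref{1st cond Lv n even} and via $2r^{2m}\Theta(\cdots)=(W_3+W_4)(t-\tau,r;\tau)$ for \eqref{2nd cond Lv n even}, and $|\partial_r Lv|$ via the two pieces just displayed.

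In each $W_i$ one inserts the pointwise bounds read off from \eqref{def Nj nu v^p}, namely $\lambda^{2m}|v(\tau,\lambda)|^p\le N_0^\nu(|v|^p)\,\lambda^{m+\nu}\langle\lambda\rangle^{-q+\frac p2-\frac32-\nu}\phi_\kappa(\tau,\lambda)^p$ and $|\partial_\lambda(\lambda^{2m}|v(\tau,\lambda)|^p)|\le N_1^\nu(|v|^p)\,\lambda^{m+\nu-1}\langle\lambda\rangle^{-q+\frac p2-\frac12-\nu}\phi_\kappa(\tau,\lambda)^p$, together with the kernel bounds of Lemma~\ref{Lemma Kj and Ktildej j=m-1,m} (for $K_m,\widetilde{K}_m$ in $W_1,W_2$, and for $K_{m-1},\widetilde{K}_{m-1}$ and their $r$-derivatives in $W_3,\dots,W_6$) and Lemma~\ref{Lemma Ktildej der} (for $\partial_\lambda\widetilde{K}_{m-1}$ in the Huygens-tail terms $W_4,W_6$ when $t>r$), choosing the free parameter $\gamma\in\{0,\tfrac12\}$ in those bounds so that, after multiplying by $r^{-2m}$ (or $r^{-2m-1}$), the surviving power of $r$ is exactly the claimed one: $\gamma=\tfrac12$ for \eqref{1st cond Lv n even} and for the $W_5+W_6$-part of \eqref{3rd cond Lv der n even}, $\gamma=0$ for \eqref{2nd cond Lv n even} and correspondingly for the $W_3+W_4$-part of \eqref{3rd cond Lv der n even}. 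After the change of variables $\xi=\lambda+\tau$, $\eta=\lambda-\tau$ and elementary estimates $\lambda^a\lesssim\langle\lambda\rangle^a$ (for $a\ge0$), the resulting $(\tau,\lambda)$-double integrals are majorised — up to the declared power of $r$ — by sums of $I_\gamma(t,r)$, $J_\gamma(t,r)$ (the interior of the tail), $P_\gamma(t,r)$ (the $\lambda$-endpoint of the tail, and the boundary term of $w_4,w_6$ for $t<r$) and $Q_\gamma(t,r)$; Proposition~\ref{Lemma I,J,P,Q gamma=0,1/2} then yields $\langle t-r\rangle^{-\kappa-\gamma}$ for each. Recollecting the $\gamma=\tfrac12$ contributions reproduces the full weight $\phi_\kappa(t,r)=\langle t+r\rangle^{-1/2}\langle t-r\rangle^{-\kappa}$ — the $\langle t+r\rangle^{-1/2}$-factor being recovered by the same mechanism that produces it in the linear decay estimate \eqref{decay estimate lin n>=4 even delta=1}, a point where the hypotheses $q\ge-\tfrac12$ and $p<p_{\Fuj}(\mu)$ enter — whereas the $\gamma=0$ contributions give $r^{1-m}\langle t-r\rangle^{-\kappa-1/2}$. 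Finally \eqref{||Lv|| n even estimate}: $r^m|\partial_r Lv|\phi_\kappa^{-1}\lesssim\widetilde{N}_1^\nu(|v|^p)$ is immediate from \eqref{3rd cond Lv der n even}, while $r^{m-1}\langle r\rangle|Lv|\phi_\kappa^{-1}$ is controlled by \eqref{1st cond Lv n even} when $r\ge1$ (then $\phi_\kappa$ cancels and $r^{-1}\langle r\rangle\lesssim1$) and by \eqref{2nd cond Lv n even} when $0<r\le1$ (then $\langle r\rangle\approx1$ and $\langle t-r\rangle\approx\langle t+r\rangle$, so $r^{m-1}\langle r\rangle\cdot r^{1-m}\langle t-r\rangle^{-\kappa-1/2}\cdot\phi_\kappa^{-1}\lesssim1$).

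The main obstacle is the sheer bookkeeping: each $W_i$ further splits according to whether $t\gtrless r$ and into the sub-regions of the $(\xi,\eta)$-square (as with $\Omega_1,\Omega_2,\Omega_3$ in the proof of Proposition~\ref{Lemma I,J,P,Q gamma=0,1/2}), and in each case $\gamma$ must be chosen so that simultaneously the power of $r$ is correct, the $\langle\lambda\rangle$- and $\langle\tau\pm\lambda\rangle$-exponents land on the form appearing in $I_\gamma,J_\gamma,P_\gamma,Q_\gamma$, and the integrand stays integrable as $\lambda\to0^+$. This last requirement is the only genuinely delicate one: since $\nu$ may be negative, a power $\lambda^a$ with $a<0$ cannot be absorbed into $\langle\lambda\rangle^a$, so on the range $\lambda\approx|\lambda_-|\to0$ one keeps the singular factor and integrates it against the $(\lambda-\lambda_-)^{-1/2}$ (resp.\ $(\lambda_--\lambda)^{-1/2}$) coming from the kernel; carrying out the $\lambda$- and then the $\tau$-integration near $\tau=t-r$ one finds convergence precisely for $\nu>-2$ in \eqref{1st cond Lv n even} and for $\nu>-1$ in \eqref{2nd cond Lv n even}, \eqref{3rd cond Lv der n even} — which is how those thresholds arise.
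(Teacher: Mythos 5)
Your overall strategy is the paper's own: represent $Lv$ and $\partial_r Lv$ through $W_1+W_2$, $W_3+W_4$ and $W_5+W_6$, insert the pointwise bounds encoded in \eqref{def Nj nu v^p} together with the kernel estimates of Lemmas \ref{Lemma Kj and Ktildej j=m-1,m} and \ref{Lemma Ktildej der}, reduce everything to $I_\gamma,J_\gamma,P_\gamma,Q_\gamma$ and invoke Proposition \ref{Lemma I,J,P,Q gamma=0,1/2}. The way you extract the thresholds $\nu>-2$ and $\nu>-1$ from the integrability of $\lambda^{\nu-\gamma+1}$ (respectively one power of $\lambda$ less for the terms built on $\partial_\lambda(\lambda^{2m}|v|^p)$) near $\lambda=0$ is exactly what happens in the paper's Lemma \ref{Lemma W1,3,5} and its two companions, and your derivation of \eqref{||Lv|| n even estimate} from the three pointwise bounds, splitting $r\le 1$ and $r\ge 1$, coincides with the paper's.

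The one step that does not work as literally written is the prescription of a single value of $\gamma$ per estimate. Fixing $\gamma=\tfrac12$ for \eqref{1st cond Lv n even} produces $r^{-m}\langle t-r\rangle^{-\kappa-\frac12}$, which is \emph{not} $\lesssim r^{-m}\phi_\kappa(t,r)=r^{-m}\langle t+r\rangle^{-\frac12}\langle t-r\rangle^{-\kappa}$ in the region where $t\approx r\to\infty$; the same defect occurs if the $r^{-1}Lv$-piece of $\partial_r Lv$ is handled only through the $W_3+W_4$ bound $r^{1-m}\langle t-r\rangle^{-\kappa-\frac12}$. What is actually needed --- and what you only gesture at with ``the same mechanism as in the linear decay estimate'' --- is to retain \emph{both} bounds $r^{m+\gamma-\frac12}\langle t-r\rangle^{-\kappa-\gamma}$, $\gamma\in\{0,\tfrac12\}$, for the terms $W_1,W_2,W_5,W_6$, and then use the elementary inequality $r^{\gamma-\frac12}\langle t-r\rangle^{-\gamma}\lesssim\langle t+r\rangle^{-\frac12}$, which holds with $\gamma=\tfrac12$ when $r\le1$ or $t\ge 2r$ (there $\langle t-r\rangle\approx\langle t+r\rangle$) and with $\gamma=0$ when $r\ge1$ and $t\le 2r$ (there $r\gtrsim\langle t+r\rangle$); correspondingly $r^{-1}|Lv|$ must be bounded via \eqref{2nd cond Lv n even} for $r\le1$ but via \eqref{1st cond Lv n even} for $r\ge1$. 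This region-dependent choice is the whole content of the paper's inequality \eqref{gamma ineq} and cannot be replaced by a global choice of $\gamma$. A smaller inaccuracy: the hypotheses $q\ge-\tfrac12$ and $p<p_{\Fuj}(\mu)$ are consumed inside the proof of Proposition \ref{Lemma I,J,P,Q gamma=0,1/2}, not in the recovery of the $\langle t+r\rangle^{-\frac12}$ factor, which is the purely elementary two-region argument just described.
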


\begin{rmk}\label{rmk lispscitz conditions}
Let $v,\bar{v}\in X_\kappa$. If we replace $N_j^\nu(|v|^p)$ by $N_j^\nu(|v|^p-|\bar{v}|^p)$, then, we obtain for $Lv-L\bar{v}$ the estimates which correspond to \eqref{1st cond Lv n even}, \eqref{2nd cond Lv n even} and \eqref{3rd cond Lv der n even}. 
\end{rmk}

We anticipate to the proof of Proposition \ref{Prop ||Lv|| n even} some lemmas.

\begin{lemma} \label{Lemma W1,3,5}
Let us consider $p,\kappa,q$ satisfying \eqref{p>p0(n+mu1) prel n even}, \eqref{upper bound for kappa n even}, \eqref{lower bound for kappa n even}, \eqref{q condition n even} and \eqref{p<p Fuj (mu1)} and let $\gamma$ be $0$ or $\frac{1}{2}$. Let $v\in X_\kappa$ and let $W_1,W_3,W_5$ be as in \eqref{def W1+W2}, \eqref{def W3+W4} and \eqref{def W5+W6}. Then, the following estimates are valid for any $t\geq 0, r>0$:
\begin{align}
&\int_0^t \langle\tau\rangle^{-\frac{\mu}{2}(p-1)}|W_1(t-\tau,r;\tau)|\, d\tau \label{estimate W1}   \\ & \qquad\qquad \qquad\lesssim N_0^\nu (|v|^p)\, r^{m+\gamma-\frac{1}{2}}\Big(I_\gamma(t,r)+\langle t-r\rangle^{-(\kappa+\frac{1}{2})p}\Big),   & \mbox{if} \quad \nu >-2, \notag \\
&\int_0^t \langle\tau\rangle^{-\frac{\mu}{2}(p-1)}|W_3(t-\tau,r;\tau)|\, d\tau \label{estimate W3} \\ & \qquad\qquad \qquad \lesssim N_1^\nu (|v|^p) \,r^{m+1}\Big(I_\frac{1}{2}(t,r)+\langle t-r\rangle^{-(\kappa+\frac{1}{2})p}\Big),  & \mbox{if} \quad \nu >-1,  \notag\\
&\int_0^t \langle\tau\rangle^{-\frac{\mu}{2}(p-1)}|W_5(t-\tau,r;\tau)|\, d\tau \label{estimate W5} \\ & \qquad\qquad \qquad \lesssim N_1^\nu (|v|^p) \, r^{m+\gamma-\frac{1}{2}}\Big(I_\gamma(t,r)+\langle t-r\rangle^{-(\kappa+\frac{1}{2})p}\Big),  & \mbox{if} \quad \nu >-1, \notag
\end{align}  where $I_\gamma(t,r)$ is given by \eqref{def Igamma n even}.
\end{lemma}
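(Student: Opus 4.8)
The plan is to prove \eqref{estimate W1}, \eqref{estimate W3} and \eqref{estimate W5} in parallel, in each case reducing the time integral of the corresponding $W_i$ to $I_\gamma(t,r)$ plus an elementary boundary contribution. I describe the argument for $W_1$; $W_3$ and $W_5$ are handled in the same way, with $g(\lambda)=|v(\tau,\lambda)|^p$ in the representations \eqref{def W1+W2}–\eqref{def W5+W6}.

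First I would substitute the kernel bound \eqref{estimate K m} of Lemma~\ref{Lemma Kj and Ktildej j=m-1,m} into the definition of $W_1$, that is into \eqref{def w1} with $(t,r)$ replaced by $(t-\tau,r)$. Writing $\lambda_\pm\doteq t-\tau\pm r$ this gives
\[
|W_1(t-\tau,r;\tau)| \lesssim r^{m+\gamma-\frac{1}{2}} \int_{|\lambda_-|}^{\lambda_+} \lambda^{m+1-\gamma} |v(\tau,\lambda)|^p (\lambda-\lambda_-)^{-\frac{1}{2}} \, d\lambda .
\]
Then I would insert the defining bound of $N_0^\nu(|v|^p)$ coming from \eqref{def Nj nu v^p} with $j=0$, namely $\lambda^{2m}|v(\tau,\lambda)|^p\lesssim N_0^\nu(|v|^p)\,\lambda^{m+\nu}\langle\lambda\rangle^{-q+\frac{p}{2}-\frac{3}{2}-\nu}\phi_\kappa(\tau,\lambda)^p$, so that the $\lambda$-weight becomes $\lambda^{\nu+1-\gamma}\langle\lambda\rangle^{-q+\frac{p}{2}-\frac{3}{2}-\nu}\phi_\kappa(\tau,\lambda)^p$, and I would split the $\lambda$-integral at $\lambda=1$. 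On $\{\lambda\geq1\}$ one has $\lambda^{\nu+1-\gamma}\approx\langle\lambda\rangle^{\nu+1-\gamma}$, so the weight collapses to $\langle\lambda\rangle^{-q+\frac{p}{2}-\frac{1}{2}-\gamma}\phi_\kappa(\tau,\lambda)^p$; multiplying by $\langle\tau\rangle^{-\frac{\mu}{2}(p-1)}$ and integrating over $\tau\in[0,t]$ this part is $\lesssim r^{m+\gamma-\frac{1}{2}}N_0^\nu(|v|^p)\,I_\gamma(t,r)$ by the very definition \eqref{def Igamma n even} of $I_\gamma$. On $\{\lambda\leq1\}$ the $\lambda$-range is nonempty only when $|\lambda_-|\leq1$, i.e. when $\tau$ lies within distance $1$ of $t-r$; there $\langle\tau\rangle\approx\langle t-r\rangle$, whence $\langle\tau\rangle^{-\frac{\mu}{2}(p-1)}\phi_\kappa(\tau,\lambda)^p\lesssim\langle t-r\rangle^{-(\kappa+\frac{1}{2})p}$, and the remaining double integral $\iint\lambda^{\nu+1-\gamma}(\lambda-\lambda_-)^{-\frac{1}{2}}\,d\lambda\,d\tau$ over this $O(1)$ region is finite. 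This last step is exactly where the hypothesis $\nu>-2$ enters: in the binding case $\gamma=\tfrac12$, carrying out the two integrations one is reduced to $\int_0^1 s^{\nu+1}\,ds<\infty$, which is equivalent to $\nu>-2$. Adding the two contributions yields \eqref{estimate W1}.

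For $W_3$ and $W_5$ the scheme is identical, now using \eqref{estimate K m-1} with $(\alpha,\gamma)=(0,\tfrac12)$ for $W_3$ — which factors out $r^{m+1}$ — and with $\alpha=1$ and free $\gamma$ for $W_5$ — which factors out $r^{m+\gamma-\frac{1}{2}}$ — together with the bound for $\partial_\lambda(\lambda^{2m}|v(\tau,\lambda)|^p)$ furnished by \eqref{def Nj nu v^p} with $j=1$. The effective $\lambda$-weight is then $\lambda^{\nu-\gamma}\langle\lambda\rangle^{-q+\frac{p}{2}-\frac{1}{2}-\nu}\phi_\kappa(\tau,\lambda)^p$; its $\{\lambda\geq1\}$ part again reconstructs $I_\gamma(t,r)$ (for $W_3$ with $\gamma=\tfrac12$ it is precisely the weight $\langle\lambda\rangle^{-q+\frac{p}{2}-1}\phi_\kappa(\tau,\lambda)^p$ of $I_{1/2}$), while its $\{\lambda\leq1\}$ part produces $\langle t-r\rangle^{-(\kappa+\frac{1}{2})p}$, the leftover integrals now reducing in the binding case $\gamma=\tfrac12$ to $\int_0^1 s^{\nu}\,ds<\infty$, equivalent to $\nu>-1$.

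The main obstacle is the endpoint analysis near $\lambda\to0^+$ together with $\lambda\to|\lambda_-|^+$: both the inverse-square-root weight $(\lambda-\lambda_-)^{-1/2}$ inherited from the kernel estimate and the (possibly negative) power of $\lambda$ generated by $N_j^\nu$ can be singular there, so one cannot simply dominate the integrand by that of $I_\gamma$; the split at $\lambda=1$ and the $\tau$-integration must be combined carefully both to extract the clean boundary term $\langle t-r\rangle^{-(\kappa+\frac{1}{2})p}$ and to verify that the stated lower bounds on $\nu$ are the sharp integrability thresholds. The rest — matching the powers of $\langle\lambda\rangle$ to those in \eqref{def Igamma n even}, using $\lambda^a\lesssim\langle\lambda\rangle^a$ for $a\geq0$, and the trivial bound $\langle\tau\rangle^{-\frac{\mu}{2}(p-1)}\leq1$ — is routine bookkeeping.
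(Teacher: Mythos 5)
Your proposal is correct and follows essentially the same route as the paper's proof: the same kernel bounds from Lemma \ref{Lemma Kj and Ktildej j=m-1,m}, the same pointwise bound extracted from \eqref{def Nj nu v^p}, the split of the $\lambda$-integral at $\lambda=1$ reconstructing $I_\gamma$ on $\{\lambda\geq1\}$, and the $\tau$-integration over the $O(1)$ region $\{|\lambda_-|\leq\lambda\leq1\}$ producing the factor $\sqrt{\lambda}$ and the boundary term $\langle t-r\rangle^{-(\kappa+\frac{1}{2})p}$, with exactly the integrability thresholds $\nu>-2$ and $\nu>-1$ that the paper identifies. No gaps.
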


\begin{proof}
%
We will follow the proof of Lemma 6.3 in \cite{KuKu96}.
We begin with the estimate for the integral that involves $W_1$. Since \eqref{def Nj nu v^p} and  \eqref{estimate K m} imply for $j=0,1$
\begin{align}
\big|\partial^j_\lambda \big(\lambda^{2m} |v(\tau,\lambda)|^p\big)\big| \lesssim \lambda^{m+\nu-j}\langle \lambda \rangle^{-q+\frac{p}{2}-\frac{3}{2}-\nu+j}\phi_\kappa(\tau,\lambda)^{p} N_j^\nu(|v|^p) \label{decay rate Nj nu(v^p)}
\end{align}
and \begin{align*}
|K_m(\lambda,t-\tau,r)| &\lesssim r^{m+\gamma-\frac{1}{2}} \lambda^{-m-\gamma} (\lambda-\lambda_-)^{-\frac{1}{2}} \qquad \mbox{for} \quad |\lambda_-|<\lambda<\lambda_+ ,
\end{align*} respectively, by using the representation formula
\begin{align*}
\int_0^t \langle\tau\rangle^{-\frac{\mu}{2}(p-1)}W_1(t-\tau,r;\tau)\, d\tau = \!\int_0^t \!\langle\tau\rangle^{-\frac{\mu}{2}(p-1)}\!\int_{|\lambda_-|}^{\lambda_+} \!\lambda^{2m+1} |v(\tau,\lambda)|^p\, K_m(\lambda,t-\tau,r)\,d\lambda  d\tau,
\end{align*}
we get
\begin{align*}
\int_0^t \langle &\tau\rangle^{-\frac{\mu}{2}(p-1)}|W_1(t-\tau,r;\tau)|\, d\tau \\ & \lesssim N_0^\nu(|v|^p)\,r^{m+\gamma-\frac{1}{2}}\int_0^t \langle\tau\rangle^{-\frac{\mu}{2}(p-1)}\int_{|\lambda_-|}^{\lambda_+} \lambda^{\nu-\gamma+1} \langle \lambda\rangle^{-q+\frac{p}{2}-\frac{3}{2}-\nu}\,\frac{\phi_\kappa(\tau,\lambda)^{p}}{\sqrt{\lambda-\lambda_-}}\,d\lambda \, d\tau, \\
 & \lesssim N_0^\nu(|v|^p)\,r^{m+\gamma-\frac{1}{2}}\Big( I_\gamma(t,r) 
+\int_0^t \langle\tau\rangle^{-\frac{\mu}{2}(p-1)}\int_{|\lambda_-|}^{\min(1,\lambda_+)} \lambda^{\nu-\gamma+1}\,\frac{\phi_\kappa(\tau,\lambda)^{p}}{\sqrt{\lambda-\lambda_-}}\,d\lambda \, d\tau\Big),
\end{align*}
where in the last inequality we used $\lambda\approx \langle\lambda\rangle$ for $\lambda\geq 1$ and $\langle \lambda\rangle^{-q+\frac{p}{2}-\frac{3}{2}-\nu}\approx 1$ for $\lambda\in [0,1]$ and $I_\gamma(t,r)$ is defined by \eqref{def Igamma n even}. In order to show \eqref{estimate W1}, it remains to prove that the second integral in the last line of the previous chain of inequalities can be estimated by $\langle t-r\rangle^{-(\kappa+\frac{1}{2})p}$. 

First of all, $\langle \tau+\lambda \rangle \geq  \langle \tau \rangle$, since $\tau$ and $\lambda$ are nonnegative. Besides, $|\lambda|\leq 1$ implies $\langle\tau -\lambda \rangle \gtrsim \langle \tau \rangle.$ Consequently, $\phi_\kappa(\tau,\lambda)^p \lesssim \langle t-r\rangle^{-(\kappa+\frac{1}{2})p}$ on the domain of integration.
Hence, applying Fubini's theorem, since $\langle\tau\rangle^{-\frac{\mu}{2}(p-1)}\lesssim 1$, we get
\begin{align*}
& \int_0^t \langle\tau\rangle^{-\frac{\mu}{2}(p-1)}\int_{|\lambda_|}^{\min(1,\lambda_+)} \!\lambda^{\nu-\gamma+1}\frac{\phi_\kappa(\tau,\lambda)^{p}}{\sqrt{\lambda-\lambda_-}}\,d\lambda \, d\tau \\
& \qquad\qquad\qquad\lesssim \langle t-r\rangle^{-(\kappa+\frac{1}{2})p} \int_0^1  \lambda^{\nu-\gamma+1} \int_{t-r-\lambda}^{t-r-\lambda}\frac{d\tau}{\sqrt{\lambda-\lambda_-}} \, d\lambda \\
& \qquad\qquad\qquad\lesssim \langle t-r\rangle^{-(\kappa+\frac{1}{2})p} \int_0^1  \lambda^{\nu-\gamma+\frac{3}{2}}\, d\lambda \lesssim \langle t-r\rangle^{-(\kappa+\frac{1}{2})p},
\end{align*} where in the last step we used $\nu-\gamma+\tfrac{3}{2}\geq \nu+1 >-1$ for $\nu>-2$.

The proofs of \eqref{estimate W3} and \eqref{estimate W5} are analogous. Indeed, using \eqref{decay rate Nj nu(v^p)} for $j=1$, the representation formulas
\begin{align*}
W_3(t-\tau,r;\tau) & = \int_{|\lambda_-|}^{\lambda_+} \partial_\lambda(\lambda^{2m}|v(\tau,\lambda)|^p)\, K_{m-1}(\lambda,t-\tau,r) \, d\lambda  , \\
W_5(t-\tau,r;\tau) & = \int_{|\lambda_-|}^{\lambda_+} \partial_\lambda(\lambda^{2m}|v(\tau,\lambda)|^p)\, \partial_r K_{m-1}(\lambda,t-\tau,r) \, d\lambda  , 
\end{align*}  and 
\begin{align}
| K_{m-1}(\lambda,t-\tau,r)| &\lesssim r^{m+1} \lambda^{-m-\frac{1}{2}} (\lambda-\lambda_-)^{-\frac{1}{2}} & \mbox{for} \quad |\lambda_-|<\lambda<\lambda_+, \label{estimate K m-1 tau} \\
|\partial_r K_{m-1}(\lambda,t-\tau,r)| &\lesssim r^{m+\gamma-\frac{1}{2}} \lambda^{-m-\gamma+1} (\lambda-\lambda_-)^{-\frac{1}{2}} & \mbox{for} \quad |\lambda_-|<\lambda<\lambda_+, \label{estimate K m-1 tau der}
\end{align}
where the last two inequalities are derived by \eqref{estimate K m-1}, then, we can follow step by step the previous computations. In the end, the only difference is that we lose one order in the power for $\lambda$ in the second integral, so, we have to require in this case $\nu>-1$ instead of $\nu>-2$. Hence, the proof is complete.
%
\end{proof}

\begin{lemma} \label{Lemma W2,4,6 on [0,t-r]}
Let us consider $p,\kappa,q$ satisfying \eqref{p>p0(n+mu1) prel n even}, \eqref{upper bound for kappa n even}, \eqref{lower bound for kappa n even}, \eqref{q condition n even} and \eqref{p<p Fuj (mu1)} and let $\gamma$ be $0$ or $\frac{1}{2}$. Let $v\in X_\kappa$ and let $W_2,W_4,W_6$ be as in \eqref{def W1+W2}, \eqref{def W3+W4} and \eqref{def W5+W6}. Then, the following estimates are valid for any $t\geq 0, r>0$ such that $t>r$:
\begin{align}
&\int_0^{t-r} \langle\tau\rangle^{-\frac{\mu}{2}(p-1)}|W_2(t-\tau,r;\tau)|\, d\tau \label{estimate W2 on [0,t-r]} \\ & \quad\qquad \lesssim N_0^\nu (|v|^p)\, r^{m+\gamma-\frac{1}{2}}\Big(J_\gamma(t,r)+\langle t-r\rangle^{-(\kappa+\frac{1}{2})p}\Big),  & \mbox{if} \quad \nu >-2, \notag \\
&\int_0^{t-r} \langle\tau\rangle^{-\frac{\mu}{2}(p-1)}|W_4(t-\tau,r;\tau)|\, d\tau \label{estimate W4 on [0,t-r]}  \\ & \quad\qquad\lesssim \widetilde{N}_1^\nu (|v|^p)\, r^{m+1}\Big(J_\frac{1}{2}(t,r)+P_\frac{1}{2}(t,r)+\langle t-r\rangle^{-(\kappa+\frac{1}{2})p}\Big), & \mbox{if} \quad \nu >-1, \notag \\
&\int_0^{t-r} \langle\tau\rangle^{-\frac{\mu}{2}(p-1)}|W_6(t-\tau,r;\tau)|\, d\tau \label{estimate W6 on [0,t-r]} \\ & \quad\qquad \lesssim \widetilde{N}_1^\nu (|v|^p) \, r^{m+\gamma-\frac{1}{2}}\Big(J_\gamma(t,r)+P_\gamma(t,r)+\langle t-r\rangle^{-(\kappa+\frac{1}{2})p}\Big), & \mbox{if} \quad \nu >-1, \notag
\end{align} where $J_\gamma(t,r)$ and $P_\gamma(t,r)$ are given by \eqref{def Jgamma n even} and \eqref{def Pgamma n even}, respectively.
\end{lemma}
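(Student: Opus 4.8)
The plan is to mirror the proof of Lemma~\ref{Lemma W1,3,5}, now with the kernels $\widetilde{K}_m$, $\widetilde{K}_{m-1}$, $\partial_r\widetilde{K}_{m-1}$, whose domain of integration is $[0,\lambda_-]$ (recall that $\lambda_-=t-\tau-r\geq 0$ for $0\leq\tau\leq t-r$). For each of $W_2,W_4,W_6$ I would first insert the pointwise bound \eqref{decay rate Nj nu(v^p)} for the nonlinearity --- with $j=0$ for $W_2$ and $j=1$ for $W_4,W_6$ --- and the kernel estimates from Lemma~\ref{Lemma Kj and Ktildej j=m-1,m}: \eqref{estimate Ktilde m} for $W_2$, \eqref{estimate Ktilde m-1} with $\alpha=0$ and $\gamma=\tfrac12$ for $W_4$, and \eqref{estimate Ktilde m-1} with $\alpha=1$ for $W_6$. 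This factors out the $r$-powers asserted in the statement and reduces the problem to $\tau$-$\lambda$ integrals of exactly the type controlled by $J_\gamma$ and $P_\gamma$.

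Next I would split the $\tau$-integral at $\tau=(t-r-1)_+$. On $[(t-r-1)_+,t-r]$ one has $\lambda_-\leq 1$, hence $\tau\approx t-r$ and, since $\lambda\leq\lambda_-\leq 1$, both $\langle\lambda\rangle\approx 1$ and $\phi_\kappa(\tau,\lambda)^p\approx\langle t-r\rangle^{-(\kappa+\frac{1}{2})p}$; the $\lambda$-integral becomes an elementary Beta-type integral and the remaining $\tau$-integral over an interval of unit length converges provided $\nu>-2$ in the $W_2$ case (where the $\lambda$-weight is $\lambda^{m+\nu+1}$) and provided $\nu>-1$ in the $W_4,W_6$ cases (where it is only $\lambda^{m+\nu-1}$). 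This is precisely the origin of the two different lower bounds on $\nu$, and it produces the remainder $\langle t-r\rangle^{-(\kappa+\frac{1}{2})p}$.

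On the bulk $[0,(t-r-1)_+]$ one has $\lambda_-\geq 1$. For $W_2$ I would bound the integrand pointwise by $N_0^\nu\,r^{m+\gamma-\frac12}$ times the integrand of $J_\gamma$: distinguishing $\lambda\leq 1$ and $\lambda\geq 1$ and exploiting $\lambda\leq\lambda_-$ together with $q\leq m-\tfrac12$ from \eqref{q condition n even}, one verifies that $\lambda^{m+\nu+1}\langle\lambda\rangle^{-q+\frac{p}{2}-\frac{3}{2}-\nu}(\lambda_-)^{-m-\gamma}\lesssim\langle\lambda_-\rangle^{-q-\frac12-\gamma}\langle\lambda\rangle^{\frac{p}{2}}$, so that no $P_\gamma$ term is needed. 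For $W_4$ and $W_6$ this pointwise comparison breaks down near $\lambda\approx\lambda_-$, because the weaker vanishing $\lambda^{m+\nu-1}$ of $\partial_\lambda(\lambda^{2m}|v|^p)$ is only partially compensated by $\widetilde{K}_{m-1}$; so I would split the $\lambda$-integral at $\lambda=\lambda_-/2$ and integrate by parts in $\lambda$ on $[0,\lambda_-/2]$, trading $\partial_\lambda(\lambda^{2m}|v|^p)$ for $\lambda^{2m}|v|^p$. The boundary term at $\lambda=\lambda_-/2$, estimated through \eqref{decay rate Nj nu(v^p)} with $j=0$ and the kernel bound, reproduces exactly the weight $\langle\lambda_-\rangle^{-q+\frac{p}{2}-1-\gamma}\phi_\kappa(\tau,\tfrac{\lambda_-}{2})^p$ defining $P_\gamma$ --- this is why the right-hand sides for $W_4,W_6$ carry $\widetilde{N}_1^\nu=N_0^\nu+N_1^\nu$ rather than $N_1^\nu$ alone. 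The leftover integral against $\partial_\lambda\widetilde{K}_{m-1}$ is handled by Lemma~\ref{Lemma Ktildej der} (the $(\lambda_--\lambda)^{-3/2}$ singularity being harmless on $[0,\lambda_-/2]$), while the piece over $[\lambda_-/2,\lambda_-]$ --- where $\langle\lambda\rangle\approx\langle\lambda_-\rangle$ and, after distinguishing whether $\tau$ lies in $[\lambda_-/2,\lambda_-]$ or not so that $\langle\tau\pm\lambda\rangle$ is monotone in $\lambda$, $\phi_\kappa(\tau,\lambda)^p\lesssim\phi_\kappa(\tau,\tfrac{\lambda_-}{2})^p$ --- is absorbed into $J_\gamma$, $P_\gamma$ and the remainder, using once more $q\leq m-\tfrac12$ and $p<p_{\Fuj}\!\big(\tfrac{n+\mu-1}{2}\big)$.

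The main obstacle I expect is the treatment of the region $\lambda\approx\lambda_-$ for $W_4$ and $W_6$: one must keep precise track of the exponents so that, after the integration by parts, the boundary term reproduces the weight of $P_\gamma$ and nothing worse, and one must choose the sub-case decomposition for $\phi_\kappa(\tau,\lambda)^p$ correctly. Everything else runs in parallel with Lemma~\ref{Lemma W1,3,5}.
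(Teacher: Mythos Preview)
Your overall strategy matches the paper's proof almost exactly: same kernel estimates, same $\tau$-split at $(t-r-1)_+$ producing the remainder $\langle t-r\rangle^{-(\kappa+\frac12)p}$, same $\lambda$-split at $\lambda_-/2$ for $W_4,W_6$, same integration by parts on $[0,\lambda_-/2]$ with the boundary term yielding $P_\gamma$ and the bulk handled via Lemma~\ref{Lemma Ktildej der}. The identification of why $\nu>-2$ suffices for $W_2$ but $\nu>-1$ is needed for $W_4,W_6$ is also correct.

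There is one genuine slip in your treatment of the piece $\lambda\in[\lambda_-/2,\lambda_-]$ for $W_4,W_6$: the pointwise bound $\phi_\kappa(\tau,\lambda)^p\lesssim\phi_\kappa(\tau,\tfrac{\lambda_-}{2})^p$ is \emph{false} in general. For instance, when $\tau=\lambda_-$ (i.e.\ $\tau=(t-r)/2$) one has $\langle\tau-\lambda_-\rangle=1$ while $\langle\tau-\lambda_-/2\rangle\approx\langle\lambda_-\rangle$, so the ratio blows up like $\langle\lambda_-\rangle^{\kappa}$; no case split on the position of $\tau$ rescues this. The paper avoids the issue entirely: on $[\lambda_-/2,\lambda_-]$ it simply uses $\lambda\approx\lambda_-\approx\langle\lambda_-\rangle$ (valid when $\tau\leq t-r-1$) to rewrite the $\lambda$- and $\lambda_-$-powers as $\langle\lambda_-\rangle^{-q-\frac12-\gamma}\langle\lambda\rangle^{p/2}$, keeps $\phi_\kappa(\tau,\lambda)^p$ untouched, and observes that the resulting integrand is exactly that of $J_\gamma$ restricted to $[\lambda_-/2,\lambda_-]\subset[0,\lambda_-]$. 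No $P_\gamma$ contribution arises from this region. If you replace your monotonicity argument by this direct comparison with the $J_\gamma$ integrand, your proof goes through and coincides with the paper's.
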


\begin{proof}
In this case we will modify the proof of Lemma 6.4 in \cite{KuKu96}.
Let us start with the proof of \eqref{estimate W2 on [0,t-r]}. Using the representation formula 
\begin{align*}
\int_0^{t-r} \langle\tau\rangle^{-\frac{\mu}{2}(p-1)}& W_2(t-\tau,r;\tau)\, d\tau \\ &= \int_0^{t-r} \langle\tau\rangle^{-\frac{\mu}{2}(p-1)}\int^{\lambda_-}_0 \lambda^{2m+1} |v(\tau,\lambda)|^p\, \widetilde{K}_m(\lambda,t-\tau,r)\,d\lambda \, d\tau,
\end{align*}
the condition \eqref{decay rate Nj nu(v^p)} for $j=0$ and 
\begin{align*}
|\widetilde{K}_m(\lambda,t-\tau,r)| &\lesssim r^{m+\gamma-\frac{1}{2}} \lambda_-^{-m-\gamma} (\lambda_- -\lambda)^{-\frac{1}{2}} \qquad \mbox{for} \quad 0<\lambda<\lambda_-, 
\end{align*} where the previous inequality follows from \eqref{estimate Ktilde m}, we obtain
\begin{align}
&\int_0^{t-r} \langle\tau\rangle^{-\frac{\mu}{2}(p-1)}|W_2(t-\tau,r;\tau)|\, d\tau \lesssim N_0^\nu(|v|^p)\,r^{m+\gamma-\frac{1}{2}}  \label{intermediate estimate W2}\\ &  \qquad \qquad \times \int_0^{t-r} \langle\tau\rangle^{-\frac{\mu}{2}(p-1)}\lambda_-^{-m-\gamma}  \int^{\lambda_-}_0 \lambda^{m+1+\nu} \langle \lambda\rangle^{-q+\frac{p}{2}-\frac{3}{2}-\nu}\,\frac{\phi_\kappa(\tau,\lambda)^{p}}{\sqrt{\lambda_- -\lambda}}\,d\lambda \, d\tau. \notag 
\end{align}
The next step is to split the $\tau-$integral on two intervals divided by $(t-r-1)_+$. On $[0,(t-r-1)_+]$, we have $\lambda_-\geq 1$ and, then, $\lambda_-\approx\langle \lambda_-\rangle$. Therefore,
\begin{align*}
&\int_0^{(t-r-1)_+}  \langle\tau\rangle^{-\frac{\mu}{2}(p-1)}\lambda_-^{-m-\gamma}\int^{\lambda_-}_0 \lambda^{m+1+\nu} \langle \lambda\rangle^{-q+\frac{p}{2}-\frac{3}{2}-\nu}\,\frac{\phi_\kappa(\tau,\lambda)^{p}}{\sqrt{\lambda_- -\lambda}}\,d\lambda \, d\tau\\& \qquad\lesssim \int_0^{(t-r-1)_+} \langle\tau\rangle^{-\frac{\mu}{2}(p-1)}\langle\lambda_-\rangle^{-m-\gamma}\int^{\lambda_-}_0 \lambda^{m+1+\nu} \langle \lambda\rangle^{-q+\frac{p}{2}-\frac{3}{2}-\nu}\,\frac{\phi_\kappa(\tau,\lambda)^{p}}{\sqrt{\lambda_- -\lambda}}\,d\lambda \, d\tau \\
& \qquad \lesssim \int_0^{(t-r-1)_+} \langle\tau\rangle^{-\frac{\mu}{2}(p-1)}\langle\lambda_-\rangle^{-m-\gamma}\int^{\lambda_-}_0  \langle \lambda\rangle^{-q+\frac{p}{2}-\frac{1}{2}+m}\,\frac{\phi_\kappa(\tau,\lambda)^{p}}{\sqrt{\lambda_- -\lambda}}\,d\lambda \, d\tau\\
& \qquad\lesssim \int_0^{(t-r-1)_+} \langle\tau\rangle^{-\frac{\mu}{2}(p-1)}\langle\lambda_-\rangle^{-\gamma-q-\frac{1}{2}}\int^{\lambda_-}_0  \frac{\langle \lambda\rangle^{\frac{p}{2}} \phi_\kappa(\tau,\lambda)^{p}}{\sqrt{\lambda_- -\lambda}}\,d\lambda \, d\tau \lesssim J_\gamma(t,r),
\end{align*} where in the second inequality we employed the condition $m+\nu+1>0$ (we are assuming $\nu>-2$) and  in the third inequality the upper bound for $q$ in \eqref{q condition n even} is used to get $$\langle\lambda_-\rangle^{-m-\gamma} \langle \lambda\rangle^{-q+\frac{p}{2}-\frac{1}{2}+m}\leq \langle\lambda_-\rangle^{-\gamma-q-\frac{1}{2}} \langle \lambda\rangle^{\frac{p}{2}} \qquad \mbox{for} \quad \lambda \in [0, \lambda_-].$$ 
On the other hand, using Fubini's theorem, on $[(t-r-1)_+, t-r]$, we find
\begin{align*}
\int_{(t-r-1)_+}^{t-r}&  \langle\tau\rangle^{-\frac{\mu}{2}(p-1)}\lambda_-^{-m-\gamma}\int^{\lambda_-}_0 \lambda^{m+1+\nu} \langle \lambda\rangle^{-q+\frac{p}{2}-\frac{3}{2}-\nu}\,\frac{\phi_\kappa(\tau,\lambda)^{p}}{\sqrt{\lambda_- -\lambda}}\,d\lambda \, d\tau \\
& = \int_0^1  \lambda^{m+1+\nu} \langle \lambda\rangle^{-q+\frac{p}{2}-\frac{3}{2}-\nu}\int^{t-r-\lambda}_{(t-r-1)_+} \langle\tau\rangle^{-\frac{\mu}{2}(p-1)}\lambda_-^{-m-\gamma}\,\frac{\phi_\kappa(\tau,\lambda)^{p}}{\sqrt{\lambda_- -\lambda}}\, d\tau\,d\lambda   \\
& \lesssim \int_0^1  \lambda^{m+1+\nu}\int^{t-r-\lambda}_{(t-r-1)_+}\lambda_-^{-m-\gamma}\,\frac{\phi_\kappa(\tau,\lambda)^{p}}{\sqrt{\lambda_- -\lambda}}\, d\tau\,d\lambda \\
& \lesssim \langle t-r\rangle^{-(\kappa+\frac{1}{2})p} \int_0^1  \lambda^{m+1+\nu}\int^{t-r-\lambda}_{(t-r-1)_+}\frac{\lambda_-^{-m-\gamma}}{\sqrt{\lambda_- -\lambda}}\, d\tau\,d\lambda,
\end{align*} where in the last inequality we used the estimate
\begin{align}\label{estimate phi^p}
\phi_\kappa(\tau,\lambda)^{p}\lesssim  \langle t-r\rangle^{-(\kappa+\frac{1}{2})p} \qquad \mbox{for} \quad \tau\in [(t-r-1)_+,t-r] \quad \mbox{and} \quad \lambda \in [0,\lambda_-].
\end{align}
Indeed, trivially $\phi_\kappa(\tau,\lambda)^{p} \leq \langle \tau -\lambda\rangle^{-(\kappa+\frac{1}{2})p}$. Moreover, if $t-r>2$, then, $$|\tau-\lambda| \geq \tau -\lambda \geq  t-r-1 -\lambda \geq t-r-2 $$ implies $\langle \tau-\lambda \rangle \gtrsim \langle t-r\rangle$ and, in turn, the desired inequality. On the other hand, for $0<t-r\leq 2$, we have immediately $\phi_\kappa(\tau,\lambda)^{p}\lesssim \langle t-r\rangle^{-(\kappa+\frac{1}{2})p}$, being $\langle t-r\rangle\approx 1$. Let $\varepsilon>0$ be such that $\varepsilon<\min(\tfrac{1}{2},\nu+2)$. For $0\leq \lambda \leq \lambda_-$ we obtain
$$\lambda_-^{-m-\gamma} \leq (\lambda_- - \lambda)^{-\frac{1}{2}+\varepsilon}\lambda^{-m-\gamma+\frac{1}{2}-\varepsilon},$$ 
 due to $\varepsilon<\tfrac{1}{2}$ and  $-m-\gamma+\tfrac{1}{2}-\varepsilon<0$. Hence,
\begin{align}
& \int_0^1  \lambda^{m+1+\nu}\int^{t-r-\lambda}_{(t-r-1)_+}\frac{\lambda_-^{-m-\gamma}}{\sqrt{\lambda_- -\lambda}}\, d\tau\,d\lambda \label{intermediate estimate W2 n2}\\ & \qquad\qquad\lesssim \int_0^1  \lambda^{\nu-\gamma+\frac{3}{2}-\varepsilon}\int^{t-r-\lambda}_{t-r-1}(\lambda_- - \lambda)^{-1+\varepsilon}\, d\tau\,d\lambda 
  \lesssim \int_0^1  \lambda^{\nu-\gamma+\frac{3}{2}-\varepsilon}\,d\lambda \lesssim 1, \notag
\end{align} where in the last inequality the condition $\varepsilon<\nu+2$ implies the boundedness of the integral. Summarizing, if we combine the estimate for the integrals over $[0,(t-r-1)_+]$ and $[(t-r-1)_+,t-r]$, then, it follows \eqref{estimate W2 on [0,t-r]}.

Let us prove now \eqref{estimate W6 on [0,t-r]}. We consider the representation formula 
\begin{align}
\label{repres W6}\int_0^{t-r} \langle\tau &\rangle^{-\frac{\mu}{2}(p-1)} W_6(t-\tau,r;\tau)\, d\tau \\ &= \int_0^{t-r}\langle\tau\rangle^{-\frac{\mu}{2}(p-1)}\int^{\lambda_-}_0 \!\partial_\lambda(\lambda^{2m} |v(\tau,\lambda)|^p)\, \partial_r\widetilde{K}_{m-1}(\lambda,t-\tau,r)\,d\lambda \, d\tau. \notag
\end{align}
We will split the inner $\lambda-$integral in two parts. We begin with the integral over $[\tfrac{\lambda_-}{2},\lambda_-]$. From \eqref{estimate Ktilde m-1} it follows:
\begin{align*}
|\partial_r \widetilde{K}_{m-1}(\lambda,t-\tau,r)| &\lesssim r^{m+\gamma-\frac{1}{2}} \lambda_-^{-m-\gamma+1} (\lambda_- -\lambda)^{-\frac{1}{2}}  \qquad \mbox{for} \quad \lambda \in (0, \lambda_-).
\end{align*} 
Thus, combining the previous estimate with \eqref{decay rate Nj nu(v^p)} for $j=1$, we find
\begin{align*}
&\int_0^{t-r}  \langle\tau\rangle^{-\frac{\mu}{2}(p-1)}\int^{\lambda_-}_{\lambda_-/2} \big|\partial_\lambda(\lambda^{2m} |v(\tau,\lambda)|^p)\, \partial_r\widetilde{K}_{m-1}(\lambda,t-\tau,r)\big|\,d\lambda \, d\tau \\
 &  \lesssim N_1^\nu(|v|^p)\,r^{m+\gamma-\frac{1}{2}} \! \int_0^{t-r} \!\!\langle\tau\rangle^{-\frac{\mu}{2}(p-1)}\lambda_-^{-m-\gamma+1}\!\int^{\lambda_-}_{\lambda_-/2} \!\lambda^{m+\nu-1} \langle \lambda\rangle^{-q+\frac{p}{2}-\frac{1}{2}-\nu}\,\frac{\phi_\kappa(\tau,\lambda)^{p}}{\sqrt{\lambda_- -\lambda}}\,d\lambda  d\tau.
\end{align*}
In the last integral we consider a further division of the domain of integration, in this case with respect to the $\tau-$integral. On the one hand, it holds
\begin{align*}
 \int_0^{(t-r-1)_+}& \langle\tau\rangle^{-\frac{\mu}{2}(p-1)}\lambda_-^{-m-\gamma+1}\int^{\lambda_-}_{\lambda_-/2} \lambda^{m+\nu-1} \langle \lambda\rangle^{-q+\frac{p}{2}-\frac{1}{2}-\nu}\,\frac{\phi_\kappa(\tau,\lambda)^{p}}{\sqrt{\lambda_- -\lambda}}\,d\lambda \, d\tau \\
& \lesssim  \int_0^{(t-r-1)_+} \langle\tau\rangle^{-\frac{\mu}{2}(p-1)} \langle \lambda_-\rangle^{-q-\frac{1}{2}-\gamma}\int^{\lambda_-}_{\lambda_-/2} \frac{\langle \lambda\rangle^{\frac{p}{2}} \phi_\kappa(\tau,\lambda)^{p}}{\sqrt{\lambda_- -\lambda}}\,d\lambda \, d\tau \lesssim J_\gamma(t,r),
\end{align*} where  we use  $\lambda\approx\lambda_- \approx \langle \lambda_-\rangle$ thanks to $\lambda \in\big[\tfrac{\lambda_-}{2},\lambda_-\big]$ and $\tau\leq t-r-1$.
On the other hand, using Fubini's theorem, for the second part we get
\begin{align*}
\int_{(t-r-1)_+}^{t-r} & \langle\tau\rangle^{-\frac{\mu}{2}(p-1)}\lambda_-^{-m-\gamma+1}\int^{\lambda_-}_{\lambda_-/2} \lambda^{m+\nu-1} \langle \lambda\rangle^{-q+\frac{p}{2}-\frac{1}{2}-\nu}\,\frac{\phi_\kappa(\tau,\lambda)^{p}}{\sqrt{\lambda_- -\lambda}}\,d\lambda \, d\tau \\
& \lesssim \int_0^1 \lambda^{m+\nu} \langle \lambda\rangle^{-q+\frac{p}{2}-\frac{1}{2}-\nu} \int_{(t-r-1)_+}^{t-r-\lambda} \langle\tau\rangle^{-\frac{\mu}{2}(p-1)}\lambda_-^{-m-\gamma} \,\frac{\phi_\kappa(\tau,\lambda)^{p}}{\sqrt{\lambda_- -\lambda}} \, d\tau \,d\lambda \\
& \lesssim \langle t-r\rangle^{-(\kappa+\frac{1}{2})p}\int_0^1 \lambda^{m+\nu}  \int_{(t-r-1)_+}^{t-r-\lambda}  \frac{\lambda_-^{-m-\gamma}}{\sqrt{\lambda_- -\lambda}} \, d\tau \,d\lambda ,
\end{align*} where in the last inequality we used \eqref{estimate phi^p}. Choosing $\varepsilon<\min(\tfrac{1}{2},\nu+1)$, we can repeat exactly the same estimate seen in \eqref{intermediate estimate W2 n2} for the last integral, requiring $\nu>-1$.
Summarizing, we have shown
\begin{align*}
\int_0^{t-r}  \langle\tau\rangle^{-\frac{\mu}{2}(p-1)}\int^{\lambda_-}_{\lambda_-/2} \big|\partial_\lambda(\lambda^{2m} |v(\tau,\lambda)|^p)&\, \partial_r\widetilde{K}_{m-1}(\lambda,t-\tau,r)\big|\,d\lambda \, d\tau \\
 &  \lesssim N_1^\nu(|v|^p)\,r^{m+\gamma-\frac{1}{2}}  \Big(J_\gamma(t,r)+\langle t-r\rangle^{-(\kappa+\frac{1}{2})p}\Big).
\end{align*}

Let us deal with the second term coming from the $\lambda-$integral in \eqref{repres W6}. Integrating by parts, we have
\begin{align*}
\int_0^{\lambda_-/2}  &\partial_\lambda(\lambda^{2m} |v(\tau,\lambda)|^p)\, \partial_r\widetilde{K}_{m-1}(\lambda,t-\tau,r)\,d\lambda  \\ &= \lambda^{2m} |v(\tau,\lambda)|^p \, \partial_r\widetilde{K}_{m-1}(\lambda,t-\tau,r)\Big|_{\lambda= \lambda_-/2}\\ & \quad -\int_0^{\lambda_-/2} \lambda^{2m} |v(\tau,\lambda)|^p\, \partial_\lambda \partial_r\widetilde{K}_{m-1}(\lambda,t-\tau,r)\,d\lambda  \doteq W_{6,1}+W_{6,2}.
\end{align*}
Let us begin with $W_{6,1}$. Using \eqref{decay rate Nj nu(v^p)} and \eqref{estimate Ktilde m-1}, one gets
\begin{align*}
\int_0^{t-r} &\langle\tau\rangle^{-\frac{\mu}{2}(p-1)}|W_{6,1}| \,d\tau \\ 
& \lesssim N_0^\nu(|v|^p)\,r^{m+\gamma-\frac{1}{2}} \int_0^{t-r} \langle\tau\rangle^{-\frac{\mu}{2}(p-1)} \lambda_-^{\nu-\gamma+\frac{1}{2}} \langle \lambda_-\rangle^{-q+\frac{p}{2}-\frac{3}{2}-\nu}  \phi_\kappa(\tau,\tfrac{\lambda_-}{2})^p \, d\tau.
\end{align*}
We split now the $\tau-$integral as usual. On the one hand,
\begin{align*}
\int_0^{(t-r-1)_+} &\langle\tau\rangle^{-\frac{\mu}{2}(p-1)} \lambda_-^{\nu-\gamma+\frac{1}{2}} \langle \lambda_-\rangle^{-q+\frac{p}{2}-\frac{3}{2}-\nu}  \phi_\kappa(\tau,\tfrac{\lambda_-}{2})^p \, d\tau \\ 
& \lesssim \int_0^{(t-r-1)_+} \langle\tau\rangle^{-\frac{\mu}{2}(p-1)} \langle \lambda_-\rangle^{-q+\frac{p}{2}-1-\gamma}  \phi_\kappa(\tau,\tfrac{\lambda_-}{2})^p \, d\tau \leq P_\gamma(t,r).
\end{align*}
On the other hand,  \eqref{estimate phi^p} yields
\begin{align*}
\int_{(t-r-1)_+}^{t-r} \langle\tau\rangle^{-\frac{\mu}{2}(p-1)} & \lambda_-^{\nu-\gamma+\frac{1}{2}} \langle \lambda_-\rangle^{-q+\frac{p}{2}-\frac{3}{2}-\nu}  \phi_\kappa(\tau,\tfrac{\lambda_-}{2})^p \, d\tau  \\ &\lesssim \langle t-r\rangle^{-(\kappa+\frac{1}{2})p} \int_{(t-r-1)_+}^{t-r}  \lambda_-^{\nu-\gamma+\frac{1}{2}}   \, d\tau \lesssim \langle t-r\rangle^{-(\kappa+\frac{1}{2})p} ,
\end{align*} where in the first inequality we also employed $\langle \lambda_-\rangle \approx 1$ and in the second one the assumption $\nu>-1$ is necessary to guarantee the finiteness of the integral.
So, we proved
\begin{align*}
\int_0^{t-r} &\langle\tau\rangle^{-\frac{\mu}{2}(p-1)}|W_{6,1}| \,d\tau \lesssim N_0^\nu(|v|^p)\,r^{m+\gamma-\frac{1}{2}} \Big(P_\gamma(t,r)+\langle t-r\rangle^{-(\kappa+\frac{1}{2})p}\Big).
\end{align*}
We consider now the integral involving $W_{6,2}$. From \eqref{estimate Ktilde j der}, we have
\begin{align*}
|\partial_\lambda \partial_r \widetilde{K}_{m-1}(\lambda,t-\tau,r)|\lesssim r^{m+\gamma-\frac{1}{2}}\lambda_-^{-m+1-\gamma}(\lambda_- -\lambda)^{-\frac{3}{2}}  \qquad \mbox{for} \quad  \lambda \in (0, \lambda_-).
\end{align*}
Combining the previous estimate with \eqref{decay rate Nj nu(v^p)} for $j=0$, we obtain
\begin{align*}
&\int_0^{t-r} \langle\tau\rangle^{-\frac{\mu}{2}(p-1)}|W_{6,2}| \,d\tau \\
& \lesssim N_0^\nu (|v|^p) \, r^{m+\gamma-\frac{1}{2}} \!\int_0^{t-r}\! \langle\tau\rangle^{-\frac{\mu}{2}(p-1)} \!\int_0^{\lambda_-/2} \!\lambda^{m+\nu}\langle \lambda\rangle^{-q+\frac{p}{2}-\frac{3}{2}-\nu} \lambda_-^{-m+1-\gamma} \, \frac{\phi_\kappa(\tau,\lambda)^p}{(\lambda_- -\lambda)^\frac{3}{2}} \, d\lambda\, d\tau \\
& \lesssim N_0^\nu (|v|^p) \, r^{m+\gamma-\frac{1}{2}} \!\int_0^{t-r}\! \langle\tau\rangle^{-\frac{\mu}{2}(p-1)} \!\int_0^{\lambda_-/2} \!\lambda^{m+\nu}\langle \lambda\rangle^{-q+\frac{p}{2}-\frac{3}{2}-\nu} \lambda_-^{-m-\gamma} \, \frac{\phi_\kappa(\tau,\lambda)^p}{\sqrt{\lambda_- -\lambda}} \, d\lambda\, d\tau,
\end{align*} where in the last step the relation $\lambda_- -\lambda \geq \tfrac{\lambda_-}{2}$ is used. The right-hand side of the previous chain of inequality may be estimated exactly as the right-hand side in \eqref{intermediate estimate W2}. The only difference is the power for $\lambda$, so that, in this case we have to require $\nu>-1$ instead of $\nu>-2$ . Therefore, it holds
\begin{align*}
\int_0^{t-r} &\langle\tau\rangle^{-\frac{\mu}{2}(p-1)}|W_{6,2}|\lesssim N_0^\nu(|v|^p)\,r^{m+\gamma-\frac{1}{2}} \Big(J_\gamma(t,r)+\langle t-r\rangle^{-(\kappa+\frac{1}{2})p}\Big).
\end{align*} 
Combining the estimates for the integrals involving $W_{6,1}$ and $W_{6,2}$, it follows \eqref{estimate W6 on [0,t-r]}. 

Finally, \eqref{estimate Ktilde m-1} and \eqref{estimate Ktilde j der} imply for $\gamma=\frac{1}{2}$
\begin{align*}
|\widetilde{K}_{m-1}(\lambda,t-\tau,r)| &\lesssim r^{m+1} \lambda_-^{-m+\frac{1}{2}} (\lambda_- -\lambda)^{-\frac{1}{2}}  & \mbox{for} \quad  \lambda \in (0, \lambda_-), \\
|\partial_\lambda \widetilde{K}_{m-1}(\lambda,t,r)| &\lesssim r^{m+1} \lambda_-^{-m+\frac{1}{2}} (\lambda_- -\lambda)^{-\frac{3}{2}}  & \mbox{for} \quad  \lambda \in (0, \lambda_-).
\end{align*}
Thus, using these estimates and the  representation formula
\begin{align*}
\int_0^{t-r} \langle\tau\rangle^{-\frac{\mu}{2}(p-1)}& W_4(t-\tau,r;\tau)\, d\tau \\ &= \int_0^{t-r}\langle\tau\rangle^{-\frac{\mu}{2}(p-1)}\int^{\lambda_-}_0 \partial_\lambda(\lambda^{2m} |v(\tau,\lambda)|^p)\, \widetilde{K}_{m-1}(\lambda,t-\tau,r)\,d\lambda \, d\tau,
\end{align*}
 one can prove \eqref{estimate W4 on [0,t-r]} exactly as \eqref{estimate W6 on [0,t-r]} has just been proved.
%
%
\end{proof}

\begin{lemma} \label{Lemma W2,4,6 on [t-r,t]}
Let us consider $p,\kappa,q$ satisfying \eqref{p>p0(n+mu1) prel n even}, \eqref{upper bound for kappa n even}, \eqref{lower bound for kappa n even}, \eqref{q condition n even} and \eqref{p<p Fuj (mu1)} and let $\gamma$ be $0$ or $\frac{1}{2}$. Let $v\in X_\kappa$ and let $W_2,W_4,W_6$ be as in \eqref{def W1+W2}, \eqref{def W3+W4} and \eqref{def W5+W6}. Then, the following estimates are valid for any $t\geq 0, r>0$:
\begin{align}
&\int_{(t-r)_+}^t \langle\tau\rangle^{-\frac{\mu}{2}(p-1)}W_2(t-\tau,r;\tau)\, d\tau =0\, \label{estimate W2 on [t-r,t]} \\
&\int_{(t-r)_+}^t  \langle\tau\rangle^{-\frac{\mu}{2}(p-1)}|W_4(t-\tau,r;\tau)|\, d\tau \label{estimate W4 on [t-r,t]} \\ &\qquad \qquad\qquad\lesssim N_0^\nu (|v|^p)\, r^{m+1}\Big(Q_\frac{1}{2}(t,r)+\langle t-r\rangle^{-(\kappa+\frac{1}{2})p}\Big),   & \mbox{if} \quad \nu >-1, \notag  \\
&\int_{(t-r)_+}^t \langle\tau\rangle^{-\frac{\mu}{2}(p-1)} |W_6(t-\tau,r;\tau)|\, d\tau \label{estimate W6 on [t-r,t]} \\ & \qquad\qquad\qquad \lesssim N_0^\nu (|v|^p) \, r^{m+\gamma-\frac{1}{2}}\Big(Q_\gamma(t,r)+\langle t-r\rangle^{-(\kappa+\frac{1}{2})p}\Big),  & \mbox{if} \quad \nu >-1, \notag
\end{align} where $Q_\gamma(t,r)$ is given by \eqref{def Qgamma n even}.
\end{lemma}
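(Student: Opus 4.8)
The plan is to prove the three assertions separately; the common observation is that on the range $\tau\in[(t-r)_+,t]$ the time argument $t-\tau$ of the kernels never exceeds the space argument $r$ (indeed $\tau\ge(t-r)_+$ forces $t-\tau\le r$), so that only the $t<r$ branches of the representation formulas are relevant. For \eqref{estimate W2 on [t-r,t]} this is already decisive: $W_2(t-\tau,r;\tau)$ is the term $w_2$ of \eqref{def w2} with $t$ replaced by $t-\tau$ and $g$ by $|v(\tau,\cdot)|^p$, so its defining $\lambda$-integral runs over $\big(0,((t-\tau)-r)_+\big)$, which is empty once $\tau\ge(t-r)_+$; hence $W_2(t-\tau,r;\tau)\equiv0$ on the whole interval of integration and \eqref{estimate W2 on [t-r,t]} holds trivially.

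For \eqref{estimate W4 on [t-r,t]} and \eqref{estimate W6 on [t-r,t]} I would first record the closed-form expressions coming from \eqref{def w4 t<r} and \eqref{def w6 t<r}, namely
\[ W_4(t-\tau,r;\tau)=(-\lambda_-)^{2m}|v(\tau,-\lambda_-)|^p\,K_{m-1}(-\lambda_-,t-\tau,r), \]
\[ W_6(t-\tau,r;\tau)=(-\lambda_-)^{2m}|v(\tau,-\lambda_-)|^p\,\partial_r K_{m-1}(-\lambda_-,t-\tau,r), \]
where $-\lambda_-=r-(t-\tau)=\tau-(t-r)\ge0$ on the interval in question. Inserting the pointwise bound \eqref{decay rate Nj nu(v^p)} (with $j=0$) for $(-\lambda_-)^{2m}|v(\tau,-\lambda_-)|^p$ and the kernel bound \eqref{estimate K m-1} with $\alpha=0$ (for $W_4$, with the choice $\gamma=\tfrac12$) and with $\alpha=1$ (for $W_6$, with $\gamma\in\{0,\tfrac12\}$) --- noting that the factor $(\lambda-t+r)^{-1/2}$ appearing there, evaluated at $\lambda=-\lambda_-$ and time $t-\tau$, equals $(2(-\lambda_-))^{-1/2}\approx(-\lambda_-)^{-1/2}$ --- and simplifying the powers of $-\lambda_-$ leads to
\[ |W_4(t-\tau,r;\tau)|\lesssim N_0^\nu(|v|^p)\,r^{m+1}(-\lambda_-)^{\nu}\langle\lambda_-\rangle^{-q+\frac p2-\frac32-\nu}\phi_\kappa(\tau,-\lambda_-)^p, \]
and likewise for $W_6$ with $r^{m+\gamma-\frac12}$ and $(-\lambda_-)^{\nu-\gamma+\frac12}$ in place of $r^{m+1}$ and $(-\lambda_-)^{\nu}$.

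It remains to integrate in $\tau$, which I would do after splitting $[(t-r)_+,t]$ at $\tau=(t-r)_++1$. On the part where $-\lambda_-\ge1$ we have $(-\lambda_-)^{\nu}\approx\langle\lambda_-\rangle^{\nu}$ (resp. $(-\lambda_-)^{\nu-\gamma+\frac12}\approx\langle\lambda_-\rangle^{\nu-\gamma+\frac12}$), so the powers of $-\lambda_-$ and $\langle\lambda_-\rangle$ combine exactly into $\langle\lambda_-\rangle^{-q+\frac p2-1-\gamma}$, and the remaining $\tau$-integral is precisely $Q_\gamma(t,r)$ of \eqref{def Qgamma n even} (with $\gamma=\tfrac12$ for $W_4$), up to the harmless enlargement of the domain of integration back to $[(t-r)_+,t]$. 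On the part where $-\lambda_-\le1$ we have $\langle\lambda_-\rangle\approx1$ and $\langle\tau\rangle^{-\frac\mu2(p-1)}\le1$; moreover $\tau-(-\lambda_-)=t-r$ and $\tau+(-\lambda_-)\ge|t-r|$, so by the same reasoning that produced \eqref{estimate phi^p} one gets $\phi_\kappa(\tau,-\lambda_-)^p\lesssim\langle t-r\rangle^{-(\kappa+\frac12)p}$, while the leftover factor $(-\lambda_-)^{\nu}$ (resp. $(-\lambda_-)^{\nu-\gamma+\frac12}$) is integrable near $0$ because $\nu>-1$ (and $\nu-\gamma+\tfrac12>-1$ for $\gamma\le\tfrac12$); this yields the contribution $\langle t-r\rangle^{-(\kappa+\frac12)p}$. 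Summing the two pieces gives \eqref{estimate W4 on [t-r,t]} and \eqref{estimate W6 on [t-r,t]}.

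The main point requiring care is the use of \eqref{estimate K m-1}, which is stated for $|t-r|<\lambda<t+r$, precisely at the endpoint $\lambda=|t-r|$ that occurs in the $t<r$ branches --- this causes no harm because the only potentially singular factor $(\lambda-t+r)^{-1/2}$ stays comparable to $(-\lambda_-)^{-1/2}$ there --- together with keeping the bookkeeping of the cases $t\ge r$ and $t<r$ straight in the $\tau$-splitting: when $t<r$ one has $(t-r)_+=0$ and $-\lambda_-\ge r-t$, so the ``near'' part is shortened and, if $r-t\ge1$, altogether empty, in which case the estimate reduces to the $Q_\gamma$ term alone.
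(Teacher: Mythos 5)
Your proposal is correct and follows essentially the same route as the paper: $W_2$ vanishes because its $\lambda$-integration range is empty for $t-\tau\leq r$, and $W_4$, $W_6$ are handled via the $t<r$ branches \eqref{def w4 t<r}, \eqref{def w6 t<r} evaluated at $\lambda=-\lambda_-$, the kernel bounds \eqref{estimate K m-1} (the paper restates them as \eqref{estimate K m-1 tau} and \eqref{estimate K m-1 tau der}), and the same splitting of the $\tau$-integral at $-\lambda_-=1$ with $\phi_\kappa(\tau,-\lambda_-)^p\lesssim\langle t-r\rangle^{-(\kappa+1/2)p}$ on the near piece and $Q_\gamma$ absorbing the far piece. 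Your extra remarks on the endpoint $\lambda=|t-r|$ of the kernel estimate and on the degenerate near piece when $t<r$ are sound and only make explicit what the paper leaves implicit.
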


\begin{proof}
We will adapt the proof of Lemma 6.5 in \cite{KuKu96} to our case.
From \eqref{def w2} we get immediately \eqref{estimate W2 on [t-r,t]}, being $t-\tau-r\leq 0$. 

Now we prove \eqref{estimate W6 on [t-r,t]}. 
Using \eqref{estimate K m-1 tau der}, \eqref{decay rate Nj nu(v^p)} for $j=0$ and the representation formula
\begin{align*}
\int_{(t-r)_+}^t \langle\tau\rangle^{-\frac{\mu}{2}(p-1)}& W_6(t-\tau,r;\tau)\, d\tau \\&= \int_{(t-r)_+}^t \langle\tau\rangle^{-\frac{\mu}{2}(p-1)} \Big(\lambda^{2m} |v(\tau,\lambda)|^p \partial_r K_{m-1}(\lambda,t-\tau,r)\Big)\Big|_{\lambda=-\lambda_-} \, d\tau,
\end{align*}
we find
\begin{align*}
\int_{(t-r)_+}^t &\langle\tau\rangle^{-\frac{\mu}{2}(p-1)} |W_6(t-\tau,r;\tau)|\, d\tau \\
 &\lesssim N_0^\nu (|v|^p) \, r^{m+\gamma-\frac{1}{2}}\int_{(t-r)_+}^t \langle\tau\rangle^{-\frac{\mu}{2}(p-1)} |\lambda_-|^{\nu+\frac{1}{2}-\gamma}\langle \lambda_- \rangle^{-q+\frac{p}{2}-\frac{3}{2}-\nu}\phi_\kappa(\tau,-\lambda_-)^p \, d\tau.
\end{align*}

We divide the integral in two parts. Firstly,
\begin{align*}
&\int_{(t-r)_+}^{(t-r+1)_+}  \langle\tau\rangle^{-\frac{\mu}{2}(p-1)} |\lambda_-|^{\nu+\frac{1}{2}-\gamma}\langle \lambda_- \rangle^{-q+\frac{p}{2}-\frac{3}{2}-\nu}\phi_\kappa(\tau,-\lambda_-)^p \, d\tau \\
&  \quad \quad\lesssim \int_{(t-r)_+}^{(t-r+1)_+}  |\lambda_-|^{\nu+\frac{1}{2}-\gamma}\phi_\kappa(\tau,-\lambda_-)^p \, d\tau \lesssim \langle t-r\rangle^{-(\kappa+\frac{1}{2})p} \int_{(t-r)_+}^{(t-r+1)_+}  |\lambda_-|^{\nu+\frac{1}{2}-\gamma}\, d\tau
\end{align*} here in the first inequality $\langle \lambda_-\rangle\approx 1$ is used, while in the second inequality we used $$\phi_\kappa(\tau,-\lambda_-)^p \lesssim \langle t-r\rangle^{-(\kappa+\frac{1}{2})p}.$$ Indeed, $ \tau+\lambda_-=t-r$ and $\tau-\lambda_-=(r-t)+2\tau \geq |t-r|$ for $\tau \geq (t-r)_+$ imply the previous inequality. Since $\nu>-1$, then,
\begin{align*}
 \int_{(t-r)_+}^{(t-r+1)_+}  |\lambda_-|^{\nu+\frac{1}{2}-\gamma}\, d\tau \leq  \int_{t-r}^{t-r+1}  |\lambda_-|^{\nu+\frac{1}{2}-\gamma}\, d\tau =\int_{0}^{1}  \tau^{\nu+\frac{1}{2}-\gamma}\, d\tau \lesssim 1.
\end{align*}
So, we proved,
\begin{align*}
\int_{(t-r)_+}^{(t-r+1)_+}  \langle\tau\rangle^{-\frac{\mu}{2}(p-1)}& W_6(t-\tau,r;\tau)\, d\tau  \lesssim  N_0^\nu (|v|^p) \, r^{m+\gamma-\frac{1}{2}}  \langle t-r\rangle^{-(\kappa+\frac{1}{2})p}.
\end{align*}
Finally,
\begin{align*}
&\int_{(t-r+1)_+}^t  \langle\tau\rangle^{-\frac{\mu}{2}(p-1)} W_6(t-\tau,r;\tau)\, d\tau  \\ &\quad\quad \lesssim N_0^\nu (|v|^p) \, r^{m+\gamma-\frac{1}{2}}\int_{(t-r+1)_+}^t \langle\tau\rangle^{-\frac{\mu}{2}(p-1)} \langle \lambda_- \rangle^{-q+\frac{p}{2}-1-\gamma}\phi_\kappa(\tau,-\lambda_-)^p \, d\tau \leq Q_\gamma(t,r),
\end{align*} being $|\lambda_-|\approx \langle \lambda_-\rangle$ on the domain of integration. Also, we showed \eqref{estimate W6 on [t-r,t]}. 

Analogously, by the representation formula
\begin{align*}
\int_{(t-r)_+}^t \langle\tau\rangle^{-\frac{\mu}{2}(p-1)}& W_4(t-\tau,r;\tau)\, d\tau \\ &= \int_{(t-r)_+}^t \langle\tau\rangle^{-\frac{\mu}{2}(p-1)} \Big(\lambda^{2m} |v(\tau,\lambda)|^p  K_{m-1}(\lambda,t-\tau,r)\Big)\Big|_{\lambda=-\lambda_-} \, d\tau
\end{align*} it is possible to show \eqref{estimate W4 on [t-r,t]} exactly as we have proved \eqref{estimate W6 on [t-r,t]}. In particular, one has to employ the inequality \eqref{estimate K m-1 tau}. This concludes the proof.
\end{proof}

\begin{proof}[Proof of Proposition \ref{Prop ||Lv|| n even}] In order to represent $Lv$ and $\partial_r Lv$, we will use \eqref{def W1+W2}, \eqref{def W3+W4} and \eqref{def W5+W6}. Since $-(\kappa+\frac{1}{2})p\leq -\kappa-\gamma$, combining Lemmas \ref{Lemma W1,3,5}, \ref{Lemma W2,4,6 on [0,t-r]} and \ref{Lemma W2,4,6 on [t-r,t]} and Proposition \ref{Lemma I,J,P,Q gamma=0,1/2},  we get for $\gamma\in\{0,\frac{1}{2}\}$
\begin{align*}
\int_0^t \langle\tau\rangle^{-\frac{\mu}{2}(p-1)} |W_i(t-\tau,r;\tau)| \, d\tau & \lesssim N_0^\nu(|v|^p)\, r^{m+\gamma-\frac{1}{2}} \langle t-r\rangle^{-\kappa-\gamma} & \mbox{for} \,\,\,  \nu>-2 \, ,\, i=1,2,\\
\int_0^t \langle\tau\rangle^{-\frac{\mu}{2}(p-1)} |W_i(t-\tau,r;\tau)| \, d\tau & \lesssim \widetilde{N}_1^\nu(|v|^p)\, r^{m+\gamma-\frac{1}{2}} \langle t-r\rangle^{-\kappa-\gamma} & \mbox{for} \,\,\,  \nu>-1\, , \,  i=5,6 ,
\end{align*} and for  $\nu>-1$,  $i=3,4$.
\begin{align}\label{est int W_3,W_4}
\int_0^t \langle\tau\rangle^{-\frac{\mu}{2}(p-1)} |W_i(t-\tau,r;\tau)| \, d\tau & \lesssim \widetilde{N}_1^\nu(|v|^p) \,r^{m+1} \langle t-r\rangle^{-\kappa-\frac{1}{2}}. 
\end{align}

We note that for $\gamma=0$ or $\gamma=\frac{1}{2}$ it holds 
\begin{align}\label{gamma ineq}
r^{\gamma-\frac{1}{2}} \langle t-r\rangle^{-\gamma}\lesssim \langle t+r\rangle^{-\frac{1}{2}}.
\end{align}
Indeed, for $t\geq 2r>0$ or $r\leq 1$ we have $\langle t+r\rangle\approx \langle t-r\rangle$, so \eqref{gamma ineq} is valid for $\gamma=\frac{1}{2}$. On the other hand, for $0\leq t\leq 2r $ and $r\geq 1$ we have $ r \approx \langle r \rangle \gtrsim \langle t+r\rangle $, thus \eqref{gamma ineq} is satisfied for $\gamma=0$. 
Hence, from the previous first two estimates, we find 
\begin{align}
\int_0^t \langle\tau\rangle^{-\frac{\mu}{2}(p-1)} |W_i(t-\tau,r;\tau)| \, d\tau & \lesssim N_0^\nu(|v|^p)\, r^{m} \phi_\kappa(t,r) & \mbox{for} \,\,\, \nu>-2  \, , \, i=1,2, \label{est int W_1,W_2}\\
\int_0^t \langle\tau\rangle^{-\frac{\mu}{2}(p-1)} |W_i(t-\tau,r;\tau)| \, d\tau & \lesssim \widetilde{N}_1^\nu(|v|^p)\, r^{m} \phi_\kappa(t,r) & \mbox{for} \,\,\, \nu>-1 \, , \, i=5,6. \label{est int W_5,W_6}
\end{align}

Let us prove now \eqref{1st cond Lv n even}. Combining \eqref{def Lv case n even} and \eqref{def W1+W2} and employing \eqref{est int W_1,W_2}, we have for $\nu>-2$
\begin{align*}
| Lv(t,r)| \lesssim r^{-2m} \!\!\int_0^t \!\langle\tau \rangle^{-\frac{\mu}{2}(p-1)} | W_1(t-\tau,r;\tau)\!+ \!W_2(t-\tau,r;\tau)| d\tau \lesssim N_0^\nu(|v|^p) r^{-m} \phi_\kappa(t,r).
\end{align*}

Similarly, using \eqref{def W3+W4} and \eqref{est int W_3,W_4} instead of 
\eqref{def W1+W2} and \eqref{est int W_1,W_2}, respectively, it follows \eqref{2nd cond Lv n even}. Let us show now \eqref{3rd cond Lv der n even}. By using \eqref{def Lv case n even} and \eqref{def W5+W6}, we have
\begin{align*}
 | \partial_r Lv(t,r)|& \approx \Big|\partial_r \Big(r^{-2m}\int_0^t \langle\tau\rangle^{-\frac{\mu}{2}(p-1)}\big(2r^{2m} \Theta(|v(\tau,\cdot)|^p)(t-\tau,r)\big)\, d\tau \Big| 
  \\ & \lesssim |r^{-1}Lv(t,r)| +  \Big| r^{-2m}\int_0^t \langle\tau\rangle^{-\frac{\mu}{2}(p-1)}(W_5(t-\tau,r;\tau)+W_6(t-\tau,r;\tau)\big)\, d\tau \Big|.
\end{align*} We can estimate the second term in the last line with $\widetilde{N}_1^\nu(|v|^p) r^{-m} \phi_\kappa(t,r)$ thanks to \eqref{est int W_5,W_6}. Also, in order to prove \eqref{3rd cond Lv der n even}, it remains to show that $|r^{-1}Lv(t,r)|$ can be controlled by the same quantity as the second term. Let us distinguish two subcases.
When $r\leq 1$, then, since $\langle t+r\rangle\approx \langle t-r\rangle$, by using \eqref{2nd cond Lv n even}, we obtain 
\begin{align*}
|r^{-1}Lv(t,r)| \lesssim r^{-1}  \widetilde{N}_1^\nu(|v|^p)\, r^{1-m} \langle t-r\rangle^{-(\kappa+\frac{1}{2})}\approx \widetilde{N}_1^\nu(|v|^p)\, r^{-m}\phi_\kappa(t,r).
\end{align*}
Else, for $r\geq 1 $ from \eqref{1st cond Lv n even} we get immediately
\begin{align*}
|r^{-1}Lv(t,r)| \lesssim r^{-1}  N_0^\nu(|v|^p) \,r^{-m} \phi_\kappa(t,r)\lesssim   N_0^\nu(|v|^p)\, r^{-m} \phi_\kappa(t,r).
\end{align*}

Finally, we prove \eqref{||Lv|| n even estimate}. We can rewrite \eqref{3rd cond Lv der n even} as 
\begin{align}\label{3rd cond Lv der n even, rewritten}
r^m \phi_\kappa(t,r)^{-1}|\partial_r Lv(t,r)| \lesssim \widetilde{N}_1(|v|^p).
\end{align}
Therefore, if we show that 
\begin{align}\label{1+2 cond Lv n even}
r^{m-1}\langle r\rangle \phi_\kappa(t,r)^{-1}| Lv(t,r)| \lesssim \widetilde{N}_1(|v|^p),
\end{align} then, we are done. We distinguish again two subcases.
If $r\leq 1$, then, by the estimate \eqref{2nd cond Lv n even} we find
\begin{align*}
r^{m-1}\langle r\rangle \phi_\kappa(t,r)^{-1}| Lv(t,r)| &\lesssim \langle r\rangle \langle t-r \rangle^{-(\kappa+\frac{1}{2})}\phi_\kappa(t,r)^{-1} \widetilde{N}_1^\nu(|v|^p)\\ & = \langle r\rangle \Big(\tfrac{ \langle t+r \rangle}{ \langle t-r \rangle}\Big)^{\frac{1}{2}} \widetilde{N}_1^\nu(|v|^p)\lesssim  \widetilde{N}_1^\nu(|v|^p), 
\end{align*} where in the last inequality we used the fact that $\tfrac{ \langle t+r \rangle}{ \langle t-r \rangle}$ is bounded in this case.
On the other hand, if $r\geq 1$,  since $r\approx\langle r\rangle$, \eqref{1st cond Lv n even} implies
\begin{align*}
r^{m-1}\langle r\rangle \phi_\kappa(t,r)^{-1}| Lv(t,r)| \lesssim   r^{-1}\langle r\rangle \widetilde{N}_1^\nu(|v|^p) \approx \widetilde{N}_1^\nu(|v|^p).
\end{align*}

Combining \eqref{3rd cond Lv der n even, rewritten} and \eqref{1+2 cond Lv n even}, we got \eqref{||Lv|| n even estimate}. This concludes the proof.
\end{proof}

In the next result we take a closer look to the relation between $\|\cdot\|_{X_\kappa}$ and $N_j^\nu (\cdot)$.

\begin{lemma} Let us consider $p,\kappa,q$ satisfying \eqref{p>p0(n+mu1) prel n even}, \eqref{upper bound for kappa n even}, \eqref{lower bound for kappa n even}, \eqref{q condition n even} and \eqref{p<p Fuj (mu1)}. 
Then, for any $v\in X_\kappa$
\begin{align*}
N_j^\nu(|v|^p)\lesssim \| v\|_{X_\kappa}^p \qquad \mbox{with} \quad \nu\doteq m-(m-1)p.
\end{align*} 

In particular, for any $v\in X_\kappa$
\begin{align}\label{Lv < v^p case n even}
\| Lv\|_{X_\kappa} \lesssim \| v\|_{X_\kappa}^p.
\end{align}
\end{lemma}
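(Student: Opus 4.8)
The plan is to derive the bound on $N_j^\nu(|v|^p)$ directly from the two pointwise estimates built into the norm of $X_\kappa$, namely
\begin{align*}
|v(\tau,\lambda)| & \le \|v\|_{X_\kappa}\, \lambda^{1-m}\langle\lambda\rangle^{-1}\phi_\kappa(\tau,\lambda), \\
|\partial_\lambda v(\tau,\lambda)| & \le \|v\|_{X_\kappa}\, \lambda^{-m}\phi_\kappa(\tau,\lambda),
\end{align*}
valid for all $\tau\ge 0$ and $\lambda>0$, and then to verify that, for the prescribed value $\nu=m-(m-1)p=m+(1-m)p$, the powers of $\lambda$ and $\langle\lambda\rangle$ occurring in \eqref{def Nj nu v^p} cancel, or at least have the right sign. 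Since $p>1$, the map $s\mapsto|s|^p$ is $\mathcal{C}^1$, hence $\lambda\mapsto\lambda^{2m}|v(\tau,\lambda)|^p$ belongs to $\mathcal{C}^1(0,\infty)$ with $\partial_\lambda(\lambda^{2m}|v|^p)=2m\lambda^{2m-1}|v|^p+p\lambda^{2m}|v|^{p-2}v\,\partial_\lambda v$, so that $N_1^\nu(|v|^p)$ is well defined.

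For $j=0$ I would raise the first pointwise bound to the power $p$ and multiply by $\lambda^{2m}$ and by the weight $\lambda^{-m-\nu}\langle\lambda\rangle^{q-\frac p2+\frac32+\nu}\phi_\kappa(\tau,\lambda)^{-p}$ from \eqref{def Nj nu v^p}: the power of $\lambda$ becomes $m+(1-m)p-\nu=0$ and the power of $\langle\lambda\rangle$ becomes $q-\frac p2+\frac32+\nu-p=(q-mp)+m-\frac p2+\frac32$, which vanishes after inserting the identity $q-mp=\frac p2-\frac{n+1}{2}$ obtained from \eqref{def q n even} and $m=\frac{n-2}{2}$; hence $N_0^\nu(|v|^p)\lesssim\|v\|_{X_\kappa}^p$. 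For $j=1$ I would estimate the two summands of $\partial_\lambda(\lambda^{2m}|v|^p)$ separately by means of both pointwise bounds, arriving at a majorant $\|v\|_{X_\kappa}^p\big(\lambda^{2m-1+(1-m)p}\langle\lambda\rangle^{-p}+\lambda^{m+\nu-1}\langle\lambda\rangle^{1-p}\big)\phi_\kappa(\tau,\lambda)^p$; after multiplying by the $j=1$ weight $\lambda^{-m-\nu+1}\langle\lambda\rangle^{q-\frac p2+\frac32+\nu-1}\phi_\kappa(\tau,\lambda)^{-p}$ the $\lambda$-powers cancel in both summands, while the $\langle\lambda\rangle$-powers become $-1$ and $0$ respectively (using the $j=0$ identity), in particular nonpositive, whence $N_1^\nu(|v|^p)\lesssim\|v\|_{X_\kappa}^p$. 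Thus $\widetilde{N}_1^\nu(|v|^p)=N_0^\nu(|v|^p)+N_1^\nu(|v|^p)\lesssim\|v\|_{X_\kappa}^p$.

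The last step is to feed this into Proposition \ref{Prop ||Lv|| n even}: the only hypothesis needed to apply \eqref{||Lv|| n even estimate} is $\nu>-1$. If $n=4$ then $m=1$ and $\nu=1$; if $n\ge 6$ is even, the inequality $\nu>-1$ is equivalent to $p<\frac{m+1}{m-1}=\frac{n}{n-4}$, and since $\mu\ge 2$ and $p_{\Fuj}$ is decreasing, the upper bound in \eqref{p>p0(n+mu1) prel n even} yields $p<p_{\Fuj}\big(\tfrac{n+\mu-1}{2}\big)\le p_{\Fuj}\big(\tfrac{n+1}{2}\big)=\tfrac{n+5}{n+1}<\tfrac{n}{n-4}$, the last step being $(n+5)(n-4)<n(n+1)$, i.e.\ $-20<0$. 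Consequently \eqref{||Lv|| n even estimate} gives $\|Lv\|_{X_\kappa}\lesssim\widetilde{N}_1^\nu(|v|^p)\lesssim\|v\|_{X_\kappa}^p$, which is \eqref{Lv < v^p case n even}. I do not expect a genuine obstacle: the argument is essentially bookkeeping of powers of $\lambda$ and $\langle\lambda\rangle$, designed so that $\nu=m-(m-1)p$ cancels the $\lambda$-exponents, and the only place where the structural hypotheses ($\mu\ge 2$ and the upper bound on $p$) are actually used is the verification of $\nu>-1$.
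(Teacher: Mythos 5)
Your proposal is correct and follows essentially the same route as the paper's own proof: both bound $N_0^\nu$ and $N_1^\nu$ by inserting the pointwise estimates encoded in $\|\cdot\|_{X_\kappa}$ into \eqref{def Nj nu v^p}, observe that the choice $\nu=m-(m-1)p$ (equivalently $\nu=-q+\tfrac32(p-1)$) makes the $\lambda$-exponents vanish and the $\langle\lambda\rangle$-exponents nonpositive, and then invoke \eqref{||Lv|| n even estimate} after checking $\nu>-1$, which reduces to $p<\tfrac{n}{n-4}=p_{\Fuj}\big(\tfrac{n-4}{2}\big)$ and is guaranteed by the upper bound in \eqref{p>p0(n+mu1) prel n even}. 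The only cosmetic difference is that you verify this last inequality via $p_{\Fuj}\big(\tfrac{n+\mu-1}{2}\big)\le p_{\Fuj}\big(\tfrac{n+1}{2}\big)$ using $\mu\ge2$, whereas the paper appeals directly to the monotonicity of $p_{\Fuj}$; both are fine.
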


\begin{proof}
Let $v\in X_\kappa$. We start with $N_0^\nu(|v|^p)$. If $\tau\geq 0$ and $\lambda>0$, then, by using the definition of $\|\cdot\|_{X_\kappa}$, we obtain 
\begin{align*}
\lambda^{2m}|v(\tau,\lambda)|^p &\lambda^{-m-\nu}\langle\lambda\rangle^{q-\frac{p}{2}+\frac{3}{2}+\nu}\phi_\kappa(\tau,\lambda)^{-p} 
\lesssim \lambda^{m-(m-1)p-\nu}\langle\lambda\rangle^{q-\frac{3}{2}p+\frac{3}{2}+\nu}\|v\|_{X_\kappa}^p =\|v\|_{X_\kappa}^p .
\end{align*}
Let us remark that we used $\nu=m-(m-1)p$ in the last line, which is equivalent to  $\nu=-q+\frac{3}{2}(p-1)$. So, the supremum of the left-hand side, also known as $N_0^\nu(|v|^p)$, can be estimated by $\|v\|_{X_\kappa}^p$ with this choice of $\nu$.

Similarly, for $\tau\geq 0$ and $ \lambda>0$
\begin{align*}
\big|\partial_\lambda (\lambda^{2m}|v(\tau,\lambda)|^p)\big|  & \lesssim \lambda^{2m} |\partial_\lambda v(\tau,\lambda)| |v(\tau,\lambda)|^{p-1}+\lambda^{2m-1}  |v(\tau,\lambda)|^{p} \\ 
& \lesssim \lambda^{2m-1-(m-1)p}\langle \lambda\rangle^{-p+1} \phi_\kappa(\tau,\lambda)^p \|v\|_{X_\kappa}^p
\end{align*} implies 
\begin{align*}
\big|\partial_\lambda (\lambda^{2m}|v(\tau,\lambda)|^p)\big|\lambda^{-m-\nu+1}\langle\lambda\rangle^{q-\frac{p}{2}+\frac{1}{2}+\nu}\phi_\kappa(\tau,\lambda)^{-p} \lesssim 
\|v\|_{X_\kappa}^p .
\end{align*} 

Taking the supremum of the left-hand side in the previous inequality, we obtain the inequality $N_0^\nu(|v|^p)\lesssim \|v\|_{X_\kappa}^p  $, for $\nu$ as before.

Finally, we prove \eqref{Lv < v^p case n even}. It is sufficient to use \eqref{||Lv|| n even estimate}, provided that $$\nu=m-(m-1)p>-1.$$
 The previous condition is equivalent to require $p<\frac{m+1}{m-1}=\frac{n}{n-4}$ for $n\geq 6$ (in the case $n=4$ the condition $\nu>-1$ is always true, being $\nu=m$). 
However, the upper bound for $p$ in \eqref{p>p0(n+mu1) prel n even} is smaller than $\frac{n}{n-4}=p_{\Fuj}(\frac{n-4}{2})$.
 Therefore, $m-(m-1)p>-1$ is fulfilled under the assumptions of this lemma. The proof is completed.
\end{proof}

The next step is to prove the H\"older continuity of $L$ and the Lipschitz continuity of $L$ with respect to a different norm.
For this purpose we introduce an auxiliary norm on $X_\kappa$. For any $v\in X_\kappa$  we define 
\begin{align*}
\vertiii{v}_{X_\kappa}\doteq \sup_{t\geq 0\,, \, r>0} r^{m} |v(t,r)| \phi_\kappa (t,r)^{-1}.
\end{align*}
We note that $\vertiii{v}_{X_\kappa}\leq \|v\|_{X_\kappa}$ for $v\in X_\kappa$.

\begin{lemma} 
 Let us consider $p,\kappa,q$ satisfying \eqref{p>p0(n+mu1) prel n even}, \eqref{upper bound for kappa n even}, \eqref{lower bound for kappa n even}, \eqref{q condition n even} and \eqref{p<p Fuj (mu1)}. 
Then, for any $v,\bar{v}\in X_\kappa$
\begin{align}
N_0^{\nu_0}(|v|^p-|\bar{v}|^p)&\lesssim \vertiii{v-\bar{v}}_{X_\kappa}\big(\|v\|_{X_\kappa}^{p-1}+\|\bar{v}\|_{X_\kappa}^{p-1}\big),\label{est N0 nu0 Lv-Lvbar}\\
N_0^{\nu_1}(|v|^p-|\bar{v}|^p)&\lesssim \|v-\bar{v}\|_{X_\kappa}\big(\|v\|_{X_\kappa}^{p-1}+\|\bar{v}\|_{X_\kappa}^{p-1}\big),\label{est N0 nu1 Lv-Lvbar}\\
N_1^{\nu_2}(|v|^p-|\bar{v}|^p)&\lesssim \|v-\bar{v}\|_{X_\kappa}\big(\|v\|_{X_\kappa}^{p-1}+\|\bar{v}\|_{X_\kappa}^{p-1}\big)  + \vertiii{v-\bar{v}}_{X_\kappa}^{p-1}\big(\|v\|_{X_\kappa}+\|\bar{v}\|_{X_\kappa}\big),\label{est N1 nu2 Lv-Lvbar}
\end{align}  where $\nu_0\doteq (m-1)(1-p),\nu_1\doteq m-(m-1)p$ and $\nu_2\doteq m+1-(m-1)p$ and $N_j^\nu(|v|^p-|\bar{v}|^p)$ is defined analogously to \eqref{def Nj nu v^p} with $|v|^p-|\bar{v}|^p$ in place of $|v|^p$.

In particular, the following inequalities are satisfied for any $v,\bar{v}\in X_\kappa$:
\begin{align}\label{Lv -Lvbar lipschitz case n even}
\vertiii{ Lv -L\bar{v}}_{X_\kappa} &\lesssim \vertiii{v-\bar{v}}_{X_\kappa}\big(\|v\|_{X_\kappa}^{p-1}+\|\bar{v}\|_{X_\kappa}^{p-1}\big), \\
\| Lv -L\bar{v}\|_{X_\kappa} &\lesssim \|v-\bar{v}\|_{X_\kappa}\big(\|v\|_{X_\kappa}^{p-1}+\|\bar{v}\|_{X_\kappa}^{p-1}\big)+ \vertiii{v-\bar{v}}_{X_\kappa}^{p-1}\big(\|v\|_{X_\kappa}+\|\bar{v}\|_{X_\kappa}\big). 
\label{Lv -Lvbar holder case n even}
\end{align}
\end{lemma}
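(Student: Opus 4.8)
The plan is to reduce the two operator bounds to pointwise estimates on $N_j^\nu(|v|^p-|\bar v|^p)$ and then feed those into the linear estimates of Proposition \ref{Prop ||Lv|| n even} through Remark \ref{rmk lispscitz conditions}. \emph{Step 1.} Since $1<p<2$ in our range (Remark \ref{rmk after prop I,J,P,Q gamma=0,1/2}), for real $a,b$ one has $\big||a|^p-|b|^p\big|\lesssim |a-b|\,(|a|^{p-1}+|b|^{p-1})$ and, crucially, $\big||a|^{p-1}\sgn a-|b|^{p-1}\sgn b\big|\lesssim |a-b|^{p-1}$. Differentiating $\lambda^{2m}(|v|^p-|\bar v|^p)$ and inserting these, $\big|\partial_\lambda^j\big(\lambda^{2m}(|v|^p-|\bar v|^p)\big)\big|$ is bounded for $j=0,1$ by a finite sum of terms, each a power of $\lambda$ times one factor carrying the difference (either $|v-\bar v|$, or $|v-\bar v|^{p-1}$, or $|\partial_\lambda(v-\bar v)|$) times powers of the remaining non-difference quantities $|v|,|\bar v|,|\partial_\lambda\bar v|$.

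\emph{Step 2.} Into each term I substitute the pointwise consequences of the norms: $|v-\bar v|\le \|v-\bar v\|_{X_\kappa}\,r^{1-m}\langle r\rangle^{-1}\phi_\kappa$ and $|\partial_\lambda(v-\bar v)|\le \|v-\bar v\|_{X_\kappa}\,r^{-m}\phi_\kappa$ coming from $\|\cdot\|_{X_\kappa}$, or the weaker $|v-\bar v|\le\vertiii{v-\bar v}_{X_\kappa}\,r^{-m}\phi_\kappa$ coming from $\vertiii{\cdot}_{X_\kappa}$, together with $|v|\le\|v\|_{X_\kappa}\,r^{1-m}\langle r\rangle^{-1}\phi_\kappa$, $|\partial_\lambda\bar v|\le\|\bar v\|_{X_\kappa}\,r^{-m}\phi_\kappa$ for the other factors. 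A power count entirely parallel to the one used for $N_j^\nu(|v|^p)\lesssim\|v\|_{X_\kappa}^p$ in the preceding lemma shows that, with the stated choices $\nu_0=(m-1)(1-p)$, $\nu_1=m-(m-1)p$, $\nu_2=m+1-(m-1)p$, the residual powers of $\lambda$ and $\langle\lambda\rangle$ cancel, giving \eqref{est N0 nu0 Lv-Lvbar}, \eqref{est N0 nu1 Lv-Lvbar} and \eqref{est N1 nu2 Lv-Lvbar}. Note that $\nu_0=\nu_1-1$: passing from $\|\cdot\|_{X_\kappa}$ to $\vertiii{\cdot}_{X_\kappa}$ loses exactly one power of $\lambda$ near the origin, and lowering the index from $\nu_1$ to $\nu_0$ restores the balance — this is precisely why the two distinct norms are matched with two distinct weights.

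\emph{Step 3.} By Remark \ref{rmk lispscitz conditions}, the estimates \eqref{1st cond Lv n even}--\eqref{3rd cond Lv der n even} hold with $Lv$ replaced by $Lv-L\bar v$ and $N_j^\nu(|v|^p)$ by $N_j^\nu(|v|^p-|\bar v|^p)$, under the same lower bounds on $\nu$; one checks $\nu_0>-2$ and $\nu_1,\nu_2>-1$ from the hypotheses exactly as for the single-function lemma (via $p<p_{\Fuj}(\tfrac{n+\mu-1}{2})<\tfrac{n}{n-4}$). Then \eqref{Lv -Lvbar lipschitz case n even} is immediate from \eqref{1st cond Lv n even} with $\nu=\nu_0$ and \eqref{est N0 nu0 Lv-Lvbar}, since $\vertiii{Lv-L\bar v}_{X_\kappa}=\sup_{t,r} r^m|Lv-L\bar v|\,\phi_\kappa^{-1}\lesssim N_0^{\nu_0}(|v|^p-|\bar v|^p)$. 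For \eqref{Lv -Lvbar holder case n even} I repeat the argument that proved \eqref{||Lv|| n even estimate} in Proposition \ref{Prop ||Lv|| n even}: the $r^m|\partial_r(\cdot)|$ contribution is controlled by \eqref{3rd cond Lv der n even} (with an admissible index, using the monotonicity $N_j^\nu\le N_j^{\nu'}$ for $\nu\le\nu'$ together with \eqref{est N0 nu1 Lv-Lvbar} and \eqref{est N1 nu2 Lv-Lvbar}), while the $r^{m-1}\langle r\rangle|Lv-L\bar v|$ contribution is handled by the same split $r\le1$ (via \eqref{2nd cond Lv n even}) and $r\ge1$ (via \eqref{1st cond Lv n even} with $\nu=\nu_0$); collecting the three $N$-estimates and absorbing $\vertiii{v-\bar v}_{X_\kappa}\le\|v-\bar v\|_{X_\kappa}$ gives the stated right-hand side.

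The genuinely delicate point is the last summand of \eqref{est N1 nu2 Lv-Lvbar}, namely $\vertiii{v-\bar v}_{X_\kappa}^{p-1}(\|v\|_{X_\kappa}+\|\bar v\|_{X_\kappa})$. Because $p-1<1$, the map $s\mapsto|s|^{p-1}\sgn s$ is only Hölder continuous of order $p-1$, so in $\partial_\lambda\big(\lambda^{2m}(|v|^p-|\bar v|^p)\big)$ the piece $\lambda^{2m}\big(|v|^{p-1}\sgn v-|\bar v|^{p-1}\sgn\bar v\big)\,\partial_\lambda\bar v$ can be estimated only by $|v-\bar v|^{p-1}|\partial_\lambda\bar v|$, never by $|v-\bar v|$ times bounded factors; hence $L$ cannot be Lipschitz for $\|\cdot\|_{X_\kappa}$, only Hölder. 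It is, nonetheless, Lipschitz for the weaker norm $\vertiii{\cdot}_{X_\kappa}$, in which no $\lambda$-derivative of the difference intervenes — this is \eqref{Lv -Lvbar lipschitz case n even}. The pairing of a contraction in the weak metric $\vertiii{\cdot}_{X_\kappa}$ with a priori bounds in the strong norm $\|\cdot\|_{X_\kappa}$ is exactly the mechanism that will close the fixed-point argument for Theorem \ref{thm GER n even semi}.
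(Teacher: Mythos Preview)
Your proposal is correct and follows essentially the same route as the paper: pointwise Hölder/Lipschitz inequalities for $|a|^p$ and $|a|^{p-1}\sgn a$, then the same power-counting against the $X_\kappa$ and $\vertiii{\cdot}_{X_\kappa}$ weights, and finally feeding the resulting $N_j^\nu$-bounds through Remark~\ref{rmk lispscitz conditions} and the argument of Proposition~\ref{Prop ||Lv|| n even}, using the monotonicity of $N_j^\nu$ in $\nu$. Your closing explanation of why only Hölder continuity is available for $\|\cdot\|_{X_\kappa}$ and why the weak/strong two-norm device is needed for the fixed-point step is accurate and a helpful addition.

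One small caution: the value $\nu_2=m+1-(m-1)p$ printed in the statement appears to be a typo for $\nu_2=m+1-mp$; the paper's own computations (the exponent bookkeeping leading to $\lambda^{p-1}\langle\lambda\rangle^{-(p-1)}M_1+M_2$, the claim $\nu_1>\nu_2$, and the check $\nu_2>-1\Leftrightarrow p<\tfrac{m+2}{m}=\tfrac{n+2}{n-2}$) are all consistent with $\nu_2=m+1-mp$ and fail for the printed value. Your Step~2 ``power count'' will only close with the corrected $\nu_2$, and your Step~3 verification of $\nu_2>-1$ should accordingly cite $p<\tfrac{n+2}{n-2}$ rather than $\tfrac{n}{n-4}$ (both hold under the standing upper bound on $p$, so the argument is unaffected).
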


\begin{proof} Let $v,\bar{v}\in X_\kappa$.
For the sake of brevity, we use throughout the proof the notations $\widetilde{G}(\tau,\lambda)\doteq |v(\tau,\lambda)|^p-|\bar{v}(\tau,\lambda)|^p$ and 
\begin{align*}
M_1 &\doteq\|v-\bar{v}\|_{X_\kappa}\big(\|v\|_{X_\kappa}^{p-1}+\|\bar{v}\|_{X_\kappa}^{p-1}\big), \qquad 
& M_2 \doteq \vertiii{v-\bar{v}}_{X_\kappa}^{p-1}\big(\|v\|_{X_\kappa}+\|\bar{v}\|_{X_\kappa}\big), \\
M_3 &\doteq \vertiii{v-\bar{v}}_{X_\kappa}\big(\|v\|_{X_\kappa}^{p-1}+\|\bar{v}\|_{X_\kappa}^{p-1}\big).
\end{align*}

Using the definitions of $\|\cdot\|_{X_\kappa}$ and $\vertiii{\cdot}_{X_\kappa}$, we arrive at 
\begin{align}
|\lambda^{2m} \widetilde{G}(\tau,\lambda)|&\lesssim \lambda^{2m-1-(m-1)p}\langle\lambda\rangle^{-p+1}\phi_\kappa(\tau,\lambda)^{p} M_3,\label{G tilde M3}\\
|\lambda^{2m} \widetilde{G}(\tau,\lambda)|&\lesssim \lambda^{2m-(m-1)p}\langle\lambda\rangle^{-p}\phi_\kappa(\tau,\lambda)^{p} M_1, \label{G tilde M1}
\end{align}
for any $\tau\geq 0, \lambda>0$.

In a similar way, since the derivative of $|v|^p$ is a $(p-1)-$H\"{o}lder continuous function, then,
\begin{align}
\label{G tilde der M1+M2}  \big|\lambda^{2m} \partial_\lambda \widetilde{G}(\tau,\lambda) \big|  
&\lesssim \lambda^{2m} | v(\tau,\lambda)|^{p-1}|\partial_\lambda v(\tau,\lambda)-\partial_\lambda \bar{v}(\tau,\lambda)|\\ & \quad +\lambda^{2m}|v(\tau,\lambda)-\bar{v}(\tau,\lambda)|^{p-1}|\partial_\lambda \bar{v}(\tau,\lambda)| \notag \\
&\lesssim \lambda^{2m-(m-1)p-1}\langle\lambda\rangle^{-p+1}\phi_{\kappa}(\tau,\lambda)^p M_1+ \lambda^{2m-mp}\phi_{\kappa}(\tau,\lambda)^p M_2.  \notag
\end{align} 

Let us derive \eqref{est N0 nu0 Lv-Lvbar}. Using \eqref{G tilde M3}, we get immediately
\begin{align*}
\lambda^{2m}|\widetilde{G}(\tau,\lambda)| \lambda^{-m-\nu_0} \langle\lambda\rangle^{m(p-1)+\nu_0}\phi_\kappa(\tau,\lambda)^{-p} &\lesssim 
M_3.
\end{align*} Here we used the equality $m(p-1)=q-\frac{p}{2}+\frac{3}{2}$. Thus, taking the supremum of the left-hand side for $\tau\geq 0,\lambda>0$, we get \eqref{est N0 nu0 Lv-Lvbar}.

Analogously, one can prove \eqref{est N0 nu1 Lv-Lvbar}, by using \eqref{G tilde M1}. Let us derive now \eqref{est N1 nu2 Lv-Lvbar}. By \eqref{G tilde M1} and \eqref{G tilde der M1+M2}, it follows:
\begin{align*}
\big|\partial_\lambda(\lambda^{2m} \partial_\lambda \widetilde{G}(\tau,\lambda)) \big|   \lambda^{-m-\nu_2+1} \langle\lambda\rangle^{m(p-1)+\nu_2-1}\phi_\kappa(\tau,\lambda)^{-p} 
& \lesssim \lambda^{p-1}\langle\lambda\rangle^{-(p-1)} M_1 +  M_2 \\ &\leq M_1+M_2,
\end{align*} which implies \eqref{est N1 nu2 Lv-Lvbar}.

It remains to prove \eqref{Lv -Lvbar lipschitz case n even} and \eqref{Lv -Lvbar holder case n even}. 
First of all, let us remark that $\nu_2>-1$ if and only if $p<\frac{m+2}{m}=\frac{n+2}{n-2}=p_{\Fuj}\big(\frac{n-2}{2}\big)$. Nevertheless, the upper bound for $p$ in \eqref{p>p0(n+mu1) prel n even} is smaller then $p_{\Fuj}\big(\frac{n-2}{2}\big)$, therefore, the condition $\nu_2>-1$ is always fulfilled under the assumptions of this lemma.
Secondly, $\nu_0=\nu_1-1$ and $\nu_1>\nu_2$. Hence, $\nu_2>-1$ implies $\nu_1>-1$ and $\nu_0>-2$.

According to Remark \ref{rmk lispscitz conditions}, analogously to \eqref{1st cond Lv n even}, it is possible to show that
\begin{align*}
|Lv(t,r)-L\bar{v}(t,r)|\lesssim N_0^\nu(|v|^p-|\bar{v}|^p) \,r^{-m} \phi_\kappa(t,r) \qquad \mbox{for} \quad \nu>-2.
\end{align*}
In particular, the previous condition for $\nu_0>-2$ implies, together with \eqref{est N0 nu0 Lv-Lvbar}, the Lipschitz condition \eqref{Lv -Lvbar lipschitz case n even}.

Similarly, replacing the source term $|v|^p$ with the difference $|v|^p-|\bar{v}|^p$, analogously to \eqref{||Lv|| n even estimate} we obtain
\begin{align*}
\| Lv-L\bar{v}\|_{X_\kappa}\lesssim N_0^\nu(|v|^p-|\bar{v}|^p)+N_1^\nu(|v|^p-|\bar{v}|^p) \qquad \mbox{for} \quad \nu>-1.
\end{align*} 
Also, because of $\nu_2>-1$, employing \eqref{est N0 nu1 Lv-Lvbar} and \eqref{est N1 nu2 Lv-Lvbar} and using the the fact $N_j^\nu$ is increasing with respect to $\nu$, we have
\begin{align*}
\| Lv-L\bar{v}\|_{X_\kappa}& \lesssim N_0^{\nu_2}(|v|^p-|\bar{v}|^p)+N_1^{\nu_2}(|v|^p-|\bar{v}|^p) \\& \lesssim N_0^{\nu_1}(|v|^p-|\bar{v}|^p)+N_1^{\nu_2}(|v|^p-|\bar{v}|^p) \lesssim M_1+M_2,
\end{align*} since $\nu_1>\nu_2$. This condition is exactly \eqref{Lv -Lvbar holder case n even}, so, the proof is over.
\end{proof}

\begin{proof}[Proof of Theorem \ref{thm GER n even semi}] Let $\kappa_1$ and $\kappa_2$ be defined as in Remark \ref{rmk after thm GER n even}. Let us fix a $\kappa$ in $(\kappa_1,\bar{\kappa}]$.
Considering the transformed Cauchy problem \eqref{semi rad n>=5}, according to our setting it is enough to prove that the operator 
\begin{align*}
F v= v^0+Lv \qquad \mbox{for any} \quad v \in X_\kappa
\end{align*} admits a uniquely determined fixed point on a ball in $X_\kappa$ around $0$ with sufficiently small radius, where $v^0$ is defined by \eqref{def v0 even case}. Thanks to Proposition \ref{thm lin probl n>=4 distr}, we get $\| v^0\|_{X_\kappa}\lesssim \varepsilon$. 

Since $L$ satisfies \eqref{Lv < v^p case n even}, \eqref{Lv -Lvbar lipschitz case n even} and \eqref{Lv -Lvbar holder case n even}, following the approach used in the proof of Proposition 5.4  in \cite{KuKu95}, we find that $F$ has a uniquely determined fixed point, which is the desired solution, provided that  $\varepsilon <\varepsilon_0$ for a suitably small $\varepsilon_0>0$.
\end{proof}

\section{Concluding remarks and open problems} \label{Section final rmk}

Finally, we point out the main consequences of Theorem \ref{thm GER n even semi}.
When $\mu=2$ and $\nu=0$, then, \eqref{semilinear scale inv damping and mass} coincides with the semilinear model studied in \cite{DabbLucRei15}. Therefore, combining the result proved in Theorem \ref{thm GER n even semi} with the blow-up result \cite[Theorem 1]{DabbLucRei15} and the global (in time) existence results \cite[Theorem 2]{D13}, \cite[Theorems 2 and 3]{DabbLucRei15} and \cite[Theorem 2.1]{DabbLuc15}, we find that $p_2(n)$, defined as in \eqref{def p2}, is the critical exponent for the semilinear model \eqref{semilinear scale inv damping} when $\mu=2$ as it is conjectured in \cite{DabbLucRei15}.

Similarly, combining Theorem \ref{thm GER n even semi} with the blow-up result \cite[Theorem 2.6]{NunPalRei16} and the global existence results from \cite{Pal18}, we have that $p_{\crit}(n,\mu)$ is critical exponent for the model \eqref{semilinear scale inv damping and mass} assuming \eqref{delta=1 condition} for $n=1$ and for $n\geq 3$ in the radial symmetric case with $\mu\leq M(n)$. The two-dimensional case is still open, even though from the necessity part we expect $p_{\Fuj}(1+\tfrac{\mu}{2})$ to be critical.

In this paper and in \cite{Pal18}, we restrict our consideration to the case in which $\mu$ and $\nu$ satisfy \eqref{delta=1 condition}. However, recently in \cite{PalRei17FvS} a blow-up result has been shown for $\delta\in(0,1]$ for $$1<p\leq \max\big\{p_{\Fuj}\big(n+\tfrac{\mu-1}{2}-\tfrac{\sqrt{\delta}}{2}\big),p_0(n+\mu)\big\},$$ excluding the case $p=p_0(n+\mu)$ for $n=1$. Consequently, a challenging open problem is to study the necessary part also in the case $\delta\in (0,1)$, showing that the previous upper bound is actually critical.

\begin{small}
\paragraph{Acknowledgement} The PhD study of the author is supported by S\"{a}chsiches Landesgraduiertenstipendium. The author is member of the Gruppo Nazionale per L'Analisi Matematica, la Probabilit\`{a} e le loro Applicazioni (GNAMPA) of the Instituto Nazionale di Alta Matematica (INdAM). Moreover, the author thanks his supervisor Michael Reissig (TU Freiberg) for strongly recommending the study of the topic of this work.
\end{small}



\begin{thebibliography}{99}

%
%
%
%
%


%
%
%
%
%
%
%
%
%
%

\bibitem{D13}   M. D'Abbicco.
\newblock {The threshold of effective damping for semilinear wave equation.}
\newblock {\em Math. Meth. Appl. Sci. } {\bf 38} (2015), no. 6, 1032-1045.

\bibitem{DabbLucMod} M. D'Abbicco, S. Lucente.
\newblock {A modified test function method for damped wave equation.}
\newblock {\em  Adv. Nonlinear Stud.} {\bf 13} (2013), no. 4, 867-892.

\bibitem{DabbLuc15} M. D'Abbicco, S. Lucente. 
\newblock{ NLWE with a special scale invariant damping in odd space dimension.}
\newblock{\em Discrete Contin. Dyn. Syst.} 2015,
\newblock{ Dynamical systems, differential equations and applications.} 10th AIMS Conference. Suppl., 312-319.



\bibitem{DabbLucRei15} M. D'Abbicco, S. Lucente, M. Reissig. 
\newblock { A shift in the Strauss exponent for semi-linear wave equations with a not effective damping.}
\newblock {\em J. Diff. Equations } {\bf 259} (2015), no. 10, 5040-5073.


\bibitem{DabbPal18} M. D'Abbicco, A. Palmieri.
\newblock{$L^p-L^q$ estimates on the conjugate line for semilinear critical dissipative Klein-Gordon equations.}
\newblock{ 10 pp. submitted for publication.}



%
%
%
%
%
%
%
%
%
%
%
%
%
%
%
%
%
%
%
%
%
%
%
%
%
%
%
%
%
%
%
%
%

\bibitem{IkedaSob17} M. Ikeda, M. Sobajima.
\newblock{Life-span of solutions to semilinear wave equation with time-dependent critical damping for specially localized initial data.}
\newblock{\em Math. Ann.} (2018)   https://doi.org/10.1007/s00208-018-1664-1. 


%
%
%
%
%
%
%
%
%
%
%
%
%

\bibitem{Ku94A} H. Kubo.
\newblock{ Asymptotic behaviour of solutions to semilinear wave equations with initial data of slow decay.}
\newblock{\em  Math. Methods Appl. Sci.} {\bf 17} (1994), no. 12, 953-970.

\bibitem{Ku94} H. Kubo.
\newblock{ On critical decay and power for semilinear wave equations in odd space dimensions.}
\newblock{\em Dept. Math, Hokkaido Univ.} Preprint series $\#$274, ID 1404 (1994), 1-24,
 \url{http://eprints3.math.sci.hokudai.ac.jp/1404}. 
 
 
 

\bibitem{Ku97} H. Kubo.
\newblock{Slowly decaying solutions for semilinear wave equations in odd space dimensions.}
\newblock{\em Nonlinear Anal.} {\bf 28} (1997), no. 2, 327-357. 


\bibitem{KuKu94} H. Kubo, K. Kubota.
\newblock{ Asymptotic behaviors of radially symmetric solutions of $\square u = |u|^p$ for super critical values $p$ in odd space dimensions.}
\newblock{\em Hokkaido Math. J.} {\bf 24} (1995), no. 2, 287-336.


\bibitem{KuKu95} H. Kubo, K. Kubota.
\newblock{Asymptotic behaviors of radial solutions to semilinear wave equations in odd space dimensions.}
\newblock{\em  Hokkaido Math. J.} {\bf 24} (1995), no. 1, 9-51.

\bibitem{KuKu96} H. Kubo, K. Kubota.
\newblock{ Asymptotic behaviors of radially symmetric solutions of $\square u=|u|^p$ for super critical values $p$ in even space dimensions.}
\newblock{\em  Japan. J. Math. (N.S.)} {\bf 24} (1998),  no. 2, 191-256.



\bibitem{LaiTakWak17} N.- A. Lai, H. Takamura, K. Wakasa.
\newblock{ Blow-up for semilinear wave equations with the scale invariant damping and super-Fujita exponent.}
\newblock{\em J. Differential Equations} {\bf 263} (2017), no. 9, 5377-5394.


%
%
%
%
%
%
%
%
%
%
%
%
%
%
%
%
%



\bibitem{NunPalRei16} W. Nunes do Nascimento, A. Palmieri, M. Reissig.
\newblock{ Semi-linear wave models with power non-linearity and scale-invariant time-dependent mass and dissipation.}
\newblock{\em Math. Nachr.}  {\bf 290} (2017), no. 11-12, 1779-1805.


\bibitem{Pal17}   A. Palmieri.
\newblock{Global existence of solutions for semi-linear wave equation with scale-invariant damping and mass in exponentially weighted spaces.}
\newblock{\em J. Math. Anal. Appl.} {\bf 461} (2018), no. 2, 1215-1240.

\bibitem{PalRei17}   A. Palmieri, M. Reissig.
\newblock{Semi-linear wave models with power non-linearity and
scale-invariant time-dependent mass and dissipation, II.}
\newblock{\em Math. Nachr.} (2017) doi: 10.1002/mana.201700144.

\bibitem{PalRei17FvS}   A. Palmieri, M. Reissig.
\newblock{Fujita versus Strauss -  a never ending story.}
 arXiv:1710.09123, (2017). 
 
\bibitem{Pal18} A. Palmieri.
 \newblock{Global existence results for a semilinear wave equation with scale-invariant damping and mass in odd space dimension}, submitted (2018). \newblock{ 62 pp. submitted for publication.}
 
 \bibitem{PalThesis} A. Palmieri.
\newblock{Global in time existence and blow-up results for a semilinear wave equation with scale-invariant damping and mass.}
\newblock{PhD thesis, TU Bergakademie Freiberg,} 2018.



%
%
%
%
%
%
%
%
%
%
%
%
%
%
%
%
%
%
%
%
%

\bibitem{Tak15} H. Takamura.
\newblock{Improved Kato's lemma on ordinary differential inequality and its application to semilinear wave equations.}
\newblock{\em Nonlinear Anal.} {\bf 125} (2015), 227-240.

%
%


%
%


%
%
%

\bibitem{TuLin17sc} Z. Tu, J. Lin. 
\newblock{A note on the blowup of scale invariant damping wave
equation with sub-Strauss exponent.}
\newblock{ arxiv:1709.00866v2}, (2017)

\bibitem{TuLin17cc} Z. Tu, J. Lin.
\newblock{Life-Span of Semilinear Wave Equations with
Scale-invariant Damping: Critical Strauss Exponent
Case.}
\newblock{ arxiv:1711.00223}, (2017)


%
%

\bibitem{Waka14A} Y. Wakasugi.
\newblock{Critical exponent for the semilinear wave equation with scale invariant damping.}
\newblock{\em Fourier Analysis}, 375-390,
\newblock{Trends Math.}, \newblock{\em Birkh\"auser/Springer}, \newblock{\em Cham}, (2014).

%

\bibitem{Wirth06} J. Wirth.
\newblock  { Wave equations with time-dependent dissipation I. Non-Effective dissipation.}
\newblock{\em J. Diff. Equations} {\bf 222} (2006), no. 2, 487-514.


\bibitem{Wirth07} J. Wirth.
\newblock { Wave equation with time-dependent dissipation II. Effective dissipation.}
\newblock {\em J. Diff. Equations} {\bf 232} (2007), no. 1, 74-103.

%
%
\bibitem{Yag05} K. Yagdjian.
\newblock{Global existence in Cauchy problem for nonlinear wave equations with variable speed of propagation.}
\newblock{\em New trends in the theory of hyperbolic equations,} 301-385, \newblock{ Oper. Theory Adv. Appl.,} { 159}, \newblock{ Adv. Partial Differ. Equ.} Birkh\"{a}user, Basel, 2005.
 
%
%
%
%
%
%
%
%
%
%
%
%
%
%
%
%
%
%
%
%
%
%
%

\end{thebibliography}
\end{document}